\UseRawInputEncoding
\documentclass[12pt,a4paper]{article}
\usepackage{latexsym,amssymb,amsfonts,amsmath,amsthm,nccmath,float,graphicx,multirow,enumitem,setspace,bm,authblk,tabularx,longtable}
\usepackage[usenames,dvipsnames]{xcolor}
\usepackage[hidelinks]{hyperref}
\usepackage[margin=2cm]{geometry}

\hypersetup{
	colorlinks,
	linkcolor={red!80!black},
	citecolor={blue!80!black},
	urlcolor={blue!80!black}
}

\newtheorem{Theorem}{Theorem}[section]
\newtheorem{Proposition}{Proposition}[section]

\newtheorem{Conjecture}{Conjecture}[section]
\newtheorem{Remark}{Remark}[section]
\newtheorem{Lemma}{Lemma}[section]

\newtheorem{Definition}{Definition}[section]
\newtheorem{Construction}{Construction}[section]

\newcommand{\Z}{\mathbb{Z}}

\newcommand{\B}{\mathcal{B}}
\newcommand{\F}{\mathcal{F}}

\def \leq {\leqslant}
\def \geq {\geqslant}

\setstretch{1.2}

\let\oldproofname=\proofname
\renewcommand{\proofname}{\rm\bf{\oldproofname}}


\allowdisplaybreaks


\begin{document}
	
\title{Perfect difference families, perfect systems of difference sets and their applications}

\author[a]{Hengrui Liu}
\author[a]{Tao Feng}
\author[b]{Xiaomiao Wang}
\author[c]{Menglong Zhang}
\affil[a]{School of Mathematics and Statistics, Beijing Jiaotong University, Beijing, 100044, P.R. China}
\affil[b]{School of Mathematics and Statistics, Ningbo University, Ningbo, 315211, P.R. China}
\affil[c]{Institute of Mathematics and Interdisciplinary Sciences, Xidian University, Xi'an, 710126, P.R. China}

\renewcommand*{\Affilfont}{\small\it}
\renewcommand\Authands{ and }

\affil[ ]{henryleo@bjtu.edu.cn; tfeng@bjtu.edu.cn; wangxiaomiao@nbu.edu.cn; mlzhang@bjtu.edu.cn}
\date{}

\maketitle

\begin{abstract}
Let $v$ be a positive odd integer. A $(v,k,\lambda)$-perfect difference family (PDF) is a collection $\mathcal F$ of $k$-subsets of $\{0,1,\ldots,v-1\}$ such that the multiset
$\bigcup_{F\in\F}\left\{x-y : x,y\in F, x>y\right\}$
covers each element of $\left\{1,2,\ldots,(v-1)/2\right\}$ exactly $\lambda$ times. Perfect difference families are a special class of perfect systems of difference sets. They were introduced by Bermond, Kotzig, and Turgeon in the 1970s, following a problem suggested by Erd\H{o}s. In this paper, we prove that a $(v,4,\lambda)$-PDF exists if and only if $\lambda(v-1) \equiv 0 \pmod{12}$, $v \geq 13$, and $(v,\lambda) \notin \{(25,1),(37,1)\}$. This result resolves a nearly 50-year-old conjecture posed by Bermond. Perfect difference families find applications in radio astronomy, optical orthogonal codes for optical code-division multiple access systems, geometric orthogonal codes for DNA origami, difference triangle sets, additive sequences of permutations, and graceful graph labelings.

To establish our main result, we introduce a new concept termed a layered difference family. This concept provides a powerful and unified perspective that not only facilitates our proof of the main theorem but also simplifies recent existence proofs for various cyclic difference packings.
\end{abstract}

\noindent {\bf Keywords}: perfect difference family; perfect system of difference sets; additive sequence of permutations; difference triangle set; optical orthogonal code; geometric orthogonal code

\footnotetext{Supported by NSFC under Grant 12271023, and Ningbo Natural Science Foundation under Grant 2024J018}

\section{Introduction} \label{sec:intro}

Throughout this paper, every union of (multi)-sets is understood as \emph{multiset union} with multiplicities of elements preserved. Denote by $\Z_v$ the additive group of integers modulo $v$. For positive integers $a,b$ and $c$ such that $a\leq b$ and $a\equiv b\pmod{c}$, we set $[a,b]_c:=\{a+cj:0\leq j\leq (b-a)/c\}$. When $c=1$, $[a,b]_1$ is simply written as $[a,b]$.

\begin{Definition}
Let $v$, $k$ and $\lambda$ be positive integers. A \emph{$(v,k,\lambda)$-cyclic difference packing} $($briefly CDP$)$ is a collection $\cal F$ of $k$-subsets $($called {\em base blocks}$)$ of ${\mathbb Z}_{v}$ such that
$$\Delta \F:=\bigcup_{F\in\F}\Delta F:=\bigcup_{F\in\F}\{x-y\ (\mathrm{mod}\ v) : x,y\in F, x\not=y\}$$
contains every element of ${\mathbb Z}_{v}\setminus \{0\}$ at most $\lambda$ times. If $\Delta \F$ contains every element of ${\mathbb Z}_{v}\setminus \{0\}$ exactly $\lambda$ times, then $\F$ is called a \emph{cyclic difference family} $($briefly CDF$)$.
\end{Definition}


The number of base blocks in a $(v,k,\lambda)$-CDF is $\lambda(v-1)/(k(k-1))$, and hence a necessary condition for the existence of a $(v,k,\lambda)$-CDF is $\lambda(v-1)\equiv 0\pmod{k(k-1)}$.

In the study of $(v,k,\lambda)$-CDFs, the case of $\lambda=1$ is of greatest interest, since a $(v,k,1)$-CDF can generate a cyclic Steiner system S$(2,k,v)$ without short orbits (cf. \cite[Lemma 1]{Jimbo}). It was shown by Peltesohn \cite{Peltesohn} in 1939 that a $(v,3,1)$-CDF exists if and only if $v\equiv 1,3 \pmod{6}$ and $v\neq 9$. Zhang, Feng and Wang \cite{zfw2022} recently proved that a $(v,4,1)$-CDF exists if and only if $v\equiv 1 \pmod{12}$ and $v\neq 25$. For $k\geq 5$, the existence of $(v,k,1)$-CDFs is far from being settled (cf. \cite{ab07,mr}). When $\lambda>1$, the existence of $(v,3,\lambda)$-CDFs has been completely solved \cite{cc}.

\subsection{Perfect difference families}\label{sec:1.1}

Perfect difference families are a special kind of cyclic difference families. For any base block $F$ of a $(v,k,\lambda)$-CDP $\F$, if $x,y\in F$, and $x\ (\mathrm{mod}\ v)>y\ (\mathrm{mod}\ v)$, we call $x-y\pmod{v}$ a {\em positive difference} from $F$, and $y-x \pmod{v}$ a {\em negative difference} from $F$. The collection of all positive differences (resp. negative differences) in $\Delta F$ is denoted by $\Delta^+ F$ (resp. $\Delta^- F$). Write $\Delta^+\F=\bigcup_{F\in\F} \Delta^+ F$ and $\Delta^-\F=\bigcup_{F\in\F} \Delta^- F$. Then $\Delta \F=\Delta^+ \F\cup \Delta^- \F$.

\begin{Definition}\label{def:PDF}
Let $v$ be a positive odd integer. A $(v,k,\lambda)$-CDF, $\cal F$, is called \emph{perfect}, written as a $(v,k,\lambda)$-PDF, if $\Delta^+\F$ covers every element of $\{1,2,\ldots,(v-1)/2\}$ exactly $\lambda$ times.
\end{Definition}

For example, $\{0,1,3,9\}$ is a $(13,4,1)$-CDF but not a $(13,4,1)$-PDF, while $\{0,2,5,6\}$ is a $(13,4,1)$-PDF.

Perfect difference families were introduced by Kotzig and Turgeon \cite{kt}, who referred to them as regular perfect systems of difference sets. Kotzig and Turgeon pointed out that Erd\H{o}s suggested investigating the existence problem of these families (cf. \cite{erdos,conjErdos}). They established the nonexistence of perfect difference families as follows.

\begin{Proposition}\label{thm:non-existence-PDF}{\rm \cite{kt}}
There are no $(v,k,1)$-PDFs for the following values: $\mathrm{(i)}$ $k=3$ and $v \equiv 13,19 \pmod{24}$; $\mathrm{(ii)}$ $k=4$ and $v \in \{25,37\}$; $\mathrm{(iii)}$ $k=5$ and $v \equiv 21 \pmod{40}$ or $v\in\{41,81\}$; $\mathrm{(iv)}$ $k\ge 6$.
\end{Proposition}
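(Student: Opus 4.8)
The plan is to treat a $(v,k,1)$-PDF as a \emph{perfect system of difference sets}. Since the $\binom{k}{2}$ positive differences of each base block are distinct positive integers and there are exactly $m\binom{k}{2}=(v-1)/2$ of them in total, they must together form the set $\{1,\ldots,(v-1)/2\}$ with each value occurring once; in particular no positive difference exceeds $(v-1)/2$, so there is no wrap-around modulo $v$ and each block may be regarded as a $k$-subset of $\Z$ with ordinary integer differences. Write $f=(v-1)/2$ and $m=(v-1)/(k(k-1))$ for the number of blocks. Two elementary invariants will drive the argument: the \emph{first moment} $\sum_{i=1}^{f} i=(v^2-1)/8$ of all differences, and the \emph{parity count}, namely that the number $\lceil f/2\rceil$ of odd integers in $\{1,\ldots,f\}$ equals $\sum_{F\in\F} p_F(k-p_F)$, where $p_F$ is the number of even entries of $F$ (a difference $x-y$ is odd exactly when $x,y$ have opposite parity).

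For part (i) and the congruence half of (iii) I would run the parity count. When $k$ is odd, $p_F$ and $k-p_F$ have opposite parities, so every product $p_F(k-p_F)$ is even and hence $\lceil f/2\rceil$ must be even. For $k=3$ one has $f=3m$, and $\lceil 3m/2\rceil$ is even iff $m\equiv0,1\pmod4$; substituting $v=6m+1$ rules out precisely $v\equiv13,19\pmod{24}$. For $k=5$ one has $f=10m$, so $\lceil f/2\rceil=5m$ is even iff $m$ is even, and $v=20m+1$ then excludes $v\equiv21\pmod{40}$. By contrast, when $k\in\{4,6\}$ the product $p_F(k-p_F)$ can be odd, so the parity count imposes no congruence obstruction whatsoever; this is exactly why these sizes admit only sporadic or uniform nonexistence and must be handled by other means.

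The sporadic values $v\in\{25,37\}$ for $k=4$ and $v\in\{41,81\}$ for $k=5$ survive both the parity and first-moment tests, so I would dispose of them by a finite structural search. Here $m$ is small ($2$ or $3$ for $k=4$; $2$ or $4$ for $k=5$), and the enumeration prunes heavily: the largest difference $f$ must be the span of some block, the difference $1$ must occur as a single unit gap in exactly one block, the difference $2$ as a single gap of size $2$ (two unit gaps would repeat the difference $1$), and so forth. Propagating these forced choices collapses the problem to a short, routine case analysis showing that no completion exists.

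The genuine obstacle is part (iv), the uniform nonexistence for all $k\ge6$. The first moment alone cannot settle this: treating each block as an optimally dense perfect ruler yields a lower bound for $\sum_{i=1}^{f} i$ that stays below $(v^2-1)/8$ for every admissible $m$, so no contradiction arises from counting the sum of the differences. One must instead exploit that each block is a Sidon set, forcing its span $s_F$ to satisfy $s_F\ge\binom{k}{2}$, together with a refined accounting of how the \emph{large} differences near $f$ can be distributed among the blocks. The crux is to show that once $k\ge6$ the blocks carrying the top differences cannot simultaneously realize $f,f-1,f-2,\ldots$ as distinct in-range differences without either repeating a value or overshooting $f$; establishing this packing impossibility uniformly in $m$ is the hard part, and it is precisely where the full strength of the perfect-system-of-difference-sets nonexistence theory of Kotzig and Turgeon is needed.
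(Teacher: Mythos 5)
The paper itself offers no argument for this proposition: it is quoted verbatim from Kotzig and Turgeon \cite{kt}, with only the added remark that the $(81,5,1)$ case rests on an exhaustive elimination (re-verified by Laufer). So the honest comparison is between your attempt and the classical proofs it would need to reproduce. Your parity count is correct and complete for part (i) and the congruence half of part (iii): the number of odd entries of $\{1,\ldots,f\}$ is $\lceil f/2\rceil$, each block contributes $p_F(k-p_F)$ odd differences, and for odd $k$ this product is always even, which for $k=3$ excludes exactly $v\equiv 13,19\pmod{24}$ and for $k=5$ exactly $v\equiv 21\pmod{40}$. This is the standard argument and that portion of your write-up would stand on its own.

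The genuine gap is part (iv), which is the only part of the statement that is uniform in $v$ and hence cannot be delegated to a finite search. You do not prove it: your final sentence defers to ``the full strength of the perfect-system-of-difference-sets nonexistence theory of Kotzig and Turgeon,'' but part (iv) \emph{is} that theory --- the appeal is circular, and no independent argument is supplied. Moreover, the one quantitative idea you do sketch cannot close the gap as stated. Writing $f=m\binom{k}{2}$, every difference exceeding $f/2$ in a fixed block corresponds to an interval of length $>f/2$ inside that block's span, so these intervals pairwise overlap and the number of such differences per block is at most $\lfloor k^2/4\rfloor$; summing gives $\lceil f/2\rceil\le m\lfloor k^2/4\rfloor$, i.e.\ $k(k-1)/4\le \lfloor k^2/4\rfloor$, which holds for \emph{every} $k$, so no contradiction appears for $k\ge 6$ without a substantially finer refinement (this is exactly the ``packing impossibility'' you name but do not establish). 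A secondary weakness: the sporadic cases $(25,4,1)$, $(37,4,1)$, $(41,5,1)$, $(81,5,1)$ are asserted to fall to ``a short, routine case analysis,'' but none of the searches is actually carried out, and for $(81,5,1)$ (four blocks, forty differences) the historical record --- cited in the paper itself --- is an exhaustive elimination process deemed worth independent double-checking, not a routine hand computation. As it stands, your proposal proves (i) and half of (iii), sketches but does not execute (ii) and the rest of (iii), and leaves (iv) unproven.
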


We remark that the nonexistence of $(81,5,1)$-PDF is not discussed in \cite{kt}. However, Laufer noted in \cite{Laufer} that Kotzig and Turgeon used an exhaustive elimination process to show that no $(81,5,1)$-PDF exists, and Laufer did a double check for this result.

Kotzig and Turgeon \cite[Theorems 2.1 and 2.2]{kt} determined the existence spectrum for $(v,3,1)$-PDFs by means of Skolem sequences. Bermond, Brouwer and Germa \cite[Theorem 1]{bbg} obtained the same result using the same technique independently.

\begin{Theorem}\label{thm:v,3,1}\emph{\cite{bbg,kt}}
A $(v,3,1)$-PDF exists if and only if $v\equiv1,7\pmod{24}$.
\end{Theorem}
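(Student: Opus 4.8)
The plan is to translate the existence of a $(v,3,1)$-PDF into a purely additive partition problem and then settle that problem with Skolem sequences. First I would record the equivalence. Since a $(v,3,1)$-CDF has exactly $(v-1)/6$ base blocks, we must have $v\equiv 1\pmod 6$; write $v=6n+1$. A base block $\{p,q,r\}$ with $p<q<r$ (representatives in $[0,v-1]$) contributes the three positive differences $q-p,\ r-q,\ r-p$, and these always satisfy $(q-p)+(r-q)=r-p$. Perfectness forces $\Delta^+\F$ to be exactly $[1,3n]$ with multiplicity one, so a $(v,3,1)$-PDF is nothing but a partition of $[1,3n]$ into $n$ triples $\{a_i,b_i,c_i\}$ with $a_i+b_i=c_i$. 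Conversely, any such partition yields the base blocks $\{0,a_i,a_i+b_i\}\subseteq\Z_v$, whose positive differences $a_i,b_i,c_i$ tile $[1,3n]$ and whose negative differences then automatically fill $[(v+1)/2,v-1]=[3n+1,6n]$, so $\F$ is a genuine CDF. Thus the theorem reduces to determining for which $n$ the interval $[1,3n]$ admits a partition into sum-triples.

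For necessity I would sum over all triples. On one hand $\sum_i (a_i+b_i+c_i)=\sum_{t=1}^{3n} t=3n(3n+1)/2$; on the other hand $a_i+b_i+c_i=2c_i$, so the total equals $2\sum_i c_i$ and is therefore even. Hence $3n(3n+1)\equiv 0\pmod 4$, which a quick check modulo $4$ shows to hold exactly when $n\equiv 0,1\pmod 4$. Since $v=6n+1$, this is precisely $v\equiv 1\pmod{24}$ (for $n\equiv 0$) or $v\equiv 7\pmod{24}$ (for $n\equiv 1$); equivalently it rules out $v\equiv 13,19\pmod{24}$, in agreement with Proposition \ref{thm:non-existence-PDF}(i).

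For sufficiency I would build the partitions from Skolem sequences. Recall that a Skolem sequence of order $n$ is a partition of $[1,2n]$ into pairs $(p_j,q_j)$ with $q_j-p_j=j$ for $j=1,\dots,n$. Given one, set $T_j=\{j,\ n+p_j,\ n+q_j\}$; then $j+(n+p_j)=n+q_j$, so each $T_j$ is a sum-triple, the smallest coordinates $j$ run through $[1,n]$, the pairs $\{n+p_j,n+q_j\}$ tile $[n+1,3n]$, and together the $T_j$ partition $[1,3n]$ exactly as required. Skolem's classical theorem guarantees such a sequence exactly when $n\equiv 0,1\pmod 4$, which matches the necessary condition, so every admissible $v$ is realized.

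The genuinely substantive step is the existence of Skolem sequences for all $n\equiv 0,1\pmod 4$: the reduction above is formal and the parity count is immediate, but producing the pairings for every such $n$ is where the real work lies. I would handle this by the standard explicit constructions, giving closed-form pairings separately for $n\equiv 0$ and $n\equiv 1\pmod 4$ and verifying in each case that every difference in $[1,n]$ and every point of $[1,2n]$ is used exactly once; since these constructions are classical and well documented, I would cite them rather than re-derive them. Note that only \emph{ordinary} Skolem sequences are needed here—the hooked variants, which cover $n\equiv 2,3\pmod 4$, are irrelevant because those residues are already excluded by the parity obstruction.
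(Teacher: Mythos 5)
Your proof is correct: the translation of a $(v,3,1)$-PDF into a partition of $[1,3n]$ into sum-triples (where $v=6n+1$) is sound in both directions, the parity count correctly rules out exactly $v\equiv 13,19\pmod{24}$, and the Skolem-sequence construction $T_j=\{j,n+p_j,n+q_j\}$ together with the classical existence theorem for orders $n\equiv 0,1\pmod 4$ settles sufficiency. However, this is not the route the paper itself takes: it is essentially the argument of the sources cited in the theorem statement \cite{kt,bbg}, which, as the paper notes in Section \ref{sec:1.1}, were both based on Skolem sequences. The paper's own proof of sufficiency (Propositions \ref{prop:3-PDF} and \ref{porp:3-PDF-1} in Section \ref{sec:3-PDF}) is instead a direct construction: almost all base blocks are written in the closed parametric form $F_{1,i}=\{0,3t+i,2i\}$ and $F_{2,i}=\{0,5t+i,2i+1\}$ (suitably shifted for $v\equiv 7\pmod{24}$), supplemented by a few sporadic blocks, and correctness is verified by interval bookkeeping for the differences; necessity is simply quoted from Proposition \ref{thm:non-existence-PDF}. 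Your approach buys brevity, a uniform treatment of both residue classes including small orders (the paper's construction for $v\equiv 7\pmod{24}$ only works for $v\geq 55$), and a self-contained necessity argument via the parity obstruction. The paper's approach buys something different: the parametric block form is deliberately chosen as the prototype that Section \ref{sec:layered-DF} abstracts into layered difference families, the machinery that drives the block-size-four results, where no Skolem-type pairing technique is available.
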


Bermond \cite{b78} conjectured that a $(v,4,1)$-PDF exists for every $v\equiv 1\pmod{12}$ and $v\geq 49$ (see also \cite[Conjecture 1]{rogersaddition}). Mathon \cite{mathon} posed the existence problem of $(v,4,1)$-PDFs as an open question.

\begin{Conjecture}[Bermond, 1978]\label{conj:Berm} {\rm \cite{b78}}
There exists a $(v,4,1)$-PDF for any $v\equiv 1\pmod{12}$ and $v\geq 49$.
\end{Conjecture}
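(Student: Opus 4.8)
The plan is to prove the stronger statement that a $(v,4,1)$-PDF exists for every $v\equiv 1\pmod{12}$ with $v\geq 13$ and $v\notin\{25,37\}$, of which Conjecture~\ref{conj:Berm}'s range $v\geq 49$ is the tail. Writing $v=12t+1$, I would first observe that the perfect condition is in fact purely combinatorial over the integers: the number of base blocks is $t$, each contributes $\binom{4}{2}=6$ positive differences, and $|[1,(v-1)/2]|=6t$, so a $(v,4,1)$-PDF forces all six differences of every block to fall inside $[1,6t]$ and hence to be strictly less than $v$. Thus reduction modulo $v$ plays no role, and the task becomes: exhibit $t$ quadruples $F_1,\dots,F_t\subseteq\{0,1,\dots,6t\}$ whose $6t$ pairwise differences partition $[1,6t]$ exactly. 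This is precisely a perfect system of difference sets in which every component has size $4$, and the conjecture reduces to producing one for every $t\geq 4$.

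Next I would build a recursive engine combining the layered difference family introduced in the paper with additive sequences of permutations. The role of the layered difference family is to slice the target interval $[1,6t]$ into consecutive layers and to record, for each base block, how its six differences are distributed among these layers; a small ``seed'' layered family together with an additive sequence of permutations of suitable order and length then inflates into PDFs for an entire arithmetic progression of values of $t$. The reason for routing everything through layers is that an additive sequence of permutations preserves the property of tiling a consecutive interval exactly, so the perfect condition (no gaps, no repeated values) propagates through the recursion — a guarantee that ordinary difference-family machinery, which controls differences only up to cyclic wraparound in $\Z_v$, cannot provide.

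I would then reduce the full spectrum to finitely many residue classes modulo a larger modulus and, within each class, to a bounded set of small leftover values of $t$. For each surviving small case I would construct an explicit $(v,4,1)$-PDF directly — by hand for the smallest $v$ (such as $v=49,61,73,\dots$) and by a targeted computer search otherwise — and feed these as seeds into the recursive engine to cover all large $t$. The nonexistence at $t=2,3$ (that is, $v=25,37$) is already recorded in Proposition~\ref{thm:non-existence-PDF}, so no construction is attempted there, and $v=13$ is handled by the explicit example $\{0,2,5,6\}$ already exhibited.

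The main obstacle is the rigidity of the perfect condition. A general $(v,4,1)$-CDF need only tile the cyclic group $\Z_v$ and therefore admits flexible algebraic (e.g.\ cyclotomic) constructions, whereas a PDF must tile the linear interval $[1,6t]$ with no wraparound, no gaps, and no repetition. This destroys the cyclic symmetry and makes direct algebraic constructions largely unavailable, so essentially all of the difficulty is concentrated in engineering recursive steps that preserve exact interval-tiling. Arranging for the layers to abut perfectly across every application of the additive sequence of permutations, while simultaneously keeping the number of irreducible small seeds finite and small enough to be settled by hand or by search, is where the real work will lie.
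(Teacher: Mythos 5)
Your opening reduction is sound (and matches the paper's own observation): a $(12t+1,4,1)$-PDF has no wraparound, so it is exactly a $(t,4,1)$-PSDS, i.e.\ $t$ quadruples whose differences tile $[1,6t]$. The genuine gap is in your second and third paragraphs: the ``recursive engine'' that is supposed to inflate a small seed into PDFs for a whole arithmetic progression of $t$ is never constructed, and the mechanism you name cannot do this job. Composition with an additive sequence of permutations (equivalently a perfect difference matrix) is \emph{multiplicative} in the order: the standard product takes a PDF of order $v_1=12t+1$, a PDM$(4,v_2)$ and a PDF of order $v_2=12s+1$, and produces a system with $12ts+t+s$ blocks, i.e.\ a PDF of order $v_1v_2$, since $(12t+1)(12s+1)=12(12ts+t+s)+1$. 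Starting from any finite stock of seeds and multipliers, the reachable orders are products of seed orders; by Dirichlet there are infinitely many primes $v\equiv 1\pmod{12}$, and no prime order beyond your seed list is ever reached. This is precisely why Wang--Chang and Ge--Miao--Sun, who had exactly this machinery, stopped at $t\le 100$ and $t\le 1000$ (Lemma \ref{lem:k=4-small orders}), and why the paper stresses that before this work no infinite family along a congruence class was known. There is also a circularity: ASP$(3,n)$ with interval basis is itself an open existence problem whose only general source is Mathon's implication from $(n,4,1)$-PDFs (cf.\ Theorem \ref{thm:ASP}), so your engine consumes objects that are as hard to supply as the PDFs you are trying to build. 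Your closing admission that making ``the layers abut perfectly across every application'' is ``where the real work will lie'' concedes exactly the missing step; as written, the proposal is a plan whose central component is absent and, in its natural instantiation, provably insufficient.

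The paper avoids recursion altogether. It uses the layered difference family not as an object to compose with ASPs, but as a recipe for the coefficients of $t$ in a \emph{direct} parametric construction: almost all base blocks have the form $\{0,\,b_1t+i+\epsilon_1,\,b_2t+2i+\epsilon_2,\,b_3t+3i+\epsilon_3\}$ with $(0,b_1,b_2,b_3)$ taken from the $(108,4,1)$-PLDF of Lemma \ref{lem:108-LDF}, so that as $i$ sweeps an interval the differences tile prescribed layers of $[1,6m+\cdot\,]$; a bounded number of sporadic blocks, found by computer, fills the remainder. One such family per residue class of $m$ modulo $9$ yields $(m,4,3)$-PSDSs for \emph{all} $m\ge 5$ (Theorem \ref{thm:(m,4,3)-PSDS}) -- prime orders included -- and then the elementary Lemma \ref{lem:c=3 to 1} appends one block to produce $(12m+13,4,1)$-PDFs, with Lemma \ref{lem:k=4-small orders} covering the small orders. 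If you want to salvage your proposal, the fix is to replace the multiplicative ASP engine by this kind of construction, linear in $t$ within each congruence class, since only an additive (in $t$) mechanism can cover a full arithmetic progression.
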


Wang and Chang \cite{wc2010} established the existence of a $(12t+1,4,1)$-PDF for any positive integer $t \leq 100$, and Ge, Miao and Sun \cite{gms} established the existence for $t \leq 1000$, both excluding $t=2,3$.

\begin{Lemma}\label{lem:k=4-small orders} {\rm \cite{gms}}
There exists a $(v,4,1)$-PDF for any positive integer $v\equiv 1\pmod{12}$ and $v\leq 12001$, with the definite exceptions of $v=25$ and $v=37$.
\end{Lemma}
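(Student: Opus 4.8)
The plan is to reduce the existence of a $(12t+1,4,1)$-PDF to a purely combinatorial partition problem and then to cover the whole range $1\leq t\leq 1000$ by a mixture of explicit recursive constructions and direct computer search. The reduction is immediate from Definition~\ref{def:PDF}: writing $v=12t+1$, every positive difference is an honest integer difference lying in $[1,(v-1)/2]=[1,6t]$, so no reduction modulo $v$ ever takes place. After translating each base block so that its least element is $0$, a block becomes $\{0,a,b,c\}$ with $0<a<b<c\leq 6t$ and contributes the six differences $a,b,c,b-a,c-a,c-b$. Since there are $t$ blocks and $6t$ target differences, a $(12t+1,4,1)$-PDF is exactly a partition of $[1,6t]$ into $t$ such six-element difference sets, i.e. a classical perfect system of difference sets; all cyclic subtleties disappear.

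With this reduction, small $t$ can be settled by exhaustive backtracking. The natural heuristic is to assign differences from largest to smallest: the largest unused value must be the span $c$ of a fresh block $\{0,a,b,c\}$, so one opens that block, chooses its two interior points $a<b$, and checks that the five remaining differences $a,b,b-a,c-a,c-b$ are all still available. Deleting assigned values from the pool and breaking symmetries keeps the search tree small enough to resolve $t$ up to several hundred, and in particular it certifies the two genuine nonexistence cases $t=2$ and $t=3$, that is $v\in\{25,37\}$, promised by Proposition~\ref{thm:non-existence-PDF}(ii).

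To reach $t=1000$ without a hopeless search I would rely on recursive constructions for perfect systems of difference sets, the principal vehicle being additive sequences of permutations, which allow a perfect system for a small parameter to be inflated into one for a much larger parameter. A modest library of seed solutions, spread over the residue classes of $t$ modulo a fixed modulus, then propagates along long arithmetic progressions of $t$, and the sporadic values left uncovered are finished off by the direct search above.

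The main obstacle is coverage rather than any single construction. Each parametric or recursive scheme yields perfect families only for $t$ in certain congruence classes or of certain shapes, and the two authentic nonexistence values $t=2,3$, together with a handful of small defective parameters, prevent any single clean family from sweeping the entire interval. The real effort is therefore organizational: assembling enough seeds and enough recursions that their union, minus $\{2,3\}$, is precisely $[1,1000]$, and verifying by machine that every one of the roughly one thousand instances receives either an explicit witness or a proof of emptiness. The total computation is large but finite and routine once the search heuristics and the recursive toolkit are in place.
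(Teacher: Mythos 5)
This lemma is not proved in the paper at all: it is imported verbatim from Ge, Miao and Sun \cite{gms}, whose content is a large body of explicit constructions and verified computer searches. So the only thing a proof of this statement can consist of is the actual exhibition of witnesses (or verified computations) for every $t\in[1,1000]\setminus\{2,3\}$, together with the nonexistence certificates for $t=2,3$. Your preliminary reduction is correct and is in fact the same equivalence the paper records in Section 1.2: since the $6t$ positive differences of the $t$ blocks must cover $[1,6t]$ exactly once, no wrap-around modulo $v$ can occur, each translated block $\{0,a,b,c\}$ has $c\leq 6t$, and a $(12t+1,4,1)$-PDF is precisely a $(t,4,1)$-PSDS. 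Your choice of toolkit --- backtracking search for small $t$, recursive inflation via additive sequences of permutations / perfect difference matrices for larger $t$ --- is also the methodology actually used in \cite{gms} and in Wang--Chang \cite{wc2010}.

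The genuine gap is that beyond this reduction your proposal proves nothing: every assertion that carries the weight of the lemma is deferred. You do not exhibit a single seed system, you do not state any recursive construction with its hypotheses (note that the ASP-based recursions are themselves conditional: Mathon's result produces an ASP$(3,n)$ \emph{from} an $(n,4,1)$-PDF, and the existence of ASP$(3,n)$ in general is open, as the paper remarks in Section 6.1, so ``propagates along long arithmetic progressions'' is not automatic and needs a concrete addition theorem such as those of Rogers \cite{rogersaddition} or Wild \cite{w86} with verified ingredient supply), and you do not perform or even specify the computation that would certify the sporadic cases and the nonexistence for $t=2,3$. The statement ``assembling enough seeds and enough recursions that their union, minus $\{2,3\}$, is precisely $[1,1000]$'' \emph{is} the lemma; asserting that this organizational task can be completed is not a proof of it. For a finite, inherently computational existence claim, the plan and the proof are different objects, and what you have written is the former.
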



So far, no infinite family of $(v,4,1)$-PDFs for which $v$ runs over a congruence class is known.

Rogers \cite{rogersmultiply} pioneered the study of $(v,k,\lambda)$-PDFs with $\lambda>1$. His definition, which requires $v$ to be odd, is the one we adopt in Definition \ref{def:PDF}. The case of even $v$ requires a modified definition (where each occurrence of $v/2$ as a positive difference must be counted twice) and falls outside the scope of our discussion.

\begin{Theorem}\label{thm:v,3,2}\emph{\cite[Theorem 4]{rogersmultiply}}
There exists a $(v,3,2)$-PDF for any positive integer $v\equiv 1\pmod{6}$.
\end{Theorem}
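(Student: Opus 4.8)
The plan is to turn the defining condition of a $(6t+1,3,2)$-PDF into an explicit covering statement and then write the base blocks down by hand. Set $v=6t+1$. A $(v,3,2)$-CDF has $\lambda(v-1)/(k(k-1))=2\cdot 6t/6=2t$ base blocks, so the $2t$ triples contribute $6t$ positive differences in total, and (since $(v-1)/2=3t$) perfectness is exactly the demand that these $6t$ values cover $\{1,2,\ldots,3t\}$ each precisely twice. If a base block is taken in the form $\{0,d,t+w\}$ with $1\le d\le t$ and $d<w\le 2t$, its three positive differences are $d$, $t+(w-d)$ and $t+w$, all of which lie in $\{1,\ldots,3t\}$. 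Hence the whole problem reduces to choosing $2t$ such blocks so that the ``small'' differences $d$ cover $\{1,\ldots,t\}$ twice while the ``large'' differences $t+(w-d)$ and $t+w$ cover $\{t+1,\ldots,3t\}$ twice.

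I would realize this with two one-parameter families indexed by $k=1,\ldots,t$:
\[
B_k=\{0,\,k,\,t+2k\},\qquad B'_k=\{0,\,k,\,3t+1-k\}.
\]
Their positive differences are $\{k,\;t+k,\;t+2k\}$ and $\{k,\;3t+1-2k,\;3t+1-k\}$ respectively, and every listed value is seen at once to lie in $[1,3t]$, so no difference wraps around modulo $v$ and the perfectness count is the honest one. Since the smallest difference of both $B_k$ and $B'_k$ equals $k$, the families already produce each of $1,\ldots,t$ exactly twice, which settles the small differences.

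It then remains to check that the four progressions $t+k$, $t+2k$, $3t+1-2k$, $3t+1-k$ (with $k$ running over $1,\ldots,t$) cover $\{t+1,\ldots,3t\}$ exactly twice. Writing a generic value as $t+s$ with $1\le s\le 2t$, this is a short parity bookkeeping: the offsets produced are $s=k$ (all of $1,\ldots,t$), $s=2k$ (the even offsets $2,4,\ldots,2t$), $s=2t+1-2k$ (the odd offsets $1,3,\ldots,2t-1$), and $s=2t+1-k$ (all of $t+1,\ldots,2t$); a case split on whether $s\le t$ or $s>t$ and whether $s$ is even or odd shows each offset is hit exactly twice. I expect the only genuine subtlety to be finding the right pair of families so that the even-offset branch $t+2k$ and the odd-offset branch $3t+1-2k$ interlock, together with keeping every difference $\le 3t$; this perfectness/no-overshoot constraint is precisely what obstructs the $\lambda=1$ case (Theorem \ref{thm:v,3,1}, which forces $v\equiv 1,7\pmod{24}$), and the point of the construction is that the two complementary parities close the double cover up uniformly for every $t$. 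Finally I would note that the recipe is uniform in $t$, and that for a few small values (e.g. $t=1$, or $t=4$ with $k=3$) two blocks coincide, which is harmless because a CDF is a multiset of base blocks.
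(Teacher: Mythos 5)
Your construction is correct, but note that the paper does not actually prove this statement: Theorem \ref{thm:v,3,2} is quoted as a known result of Rogers \cite{rogersmultiply}, so your argument is not an alternative to a proof in the paper but a self-contained replacement for the citation. The verification goes through exactly as you outline: every difference you list lies in $[1,3t]$, so no wrap-around modulo $v=6t+1$ occurs and the positive differences of $B_k=\{0,k,t+2k\}$ and $B'_k=\{0,k,3t+1-k\}$ are the literal integers $k,\,t+k,\,t+2k$ and $k,\,3t+1-2k,\,3t+1-k$; the smallest differences give each element of $[1,t]$ exactly twice, and no other difference intrudes there since $t+k$, $t+2k$, $3t+1-2k$, $3t+1-k$ are all $\geq t+1$; finally, writing the large differences as $t+s$, the four offset progressions $k$, $2k$, $2t+1-2k$, $2t+1-k$ cover $[1,2t]$ as (all of $[1,t]$) $\cup$ (all evens) $\cup$ (all odds) $\cup$ (all of $[t+1,2t]$), so each $s$ is hit exactly twice, once from the parity pair and once from the halves pair. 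One small correction: the coincidence $B_k=B'_k$ occurs precisely when $3k=2t+1$, hence for \emph{every} $t\equiv 1\pmod 3$, not merely for a few small $t$; but, as you say, this is harmless because difference families are multisets of base blocks --- a convention the paper itself relies on when it forms a $(v,4,2)$-PDF from two copies of a $(v,4,1)$-PDF in the proof of Lemma \ref{thm:(v,4,2)PDF}. What your route buys is a short, uniform, closed-form construction in the same spirit as the paper's Proposition \ref{prop:3-PDF} for $(v,3,1)$-PDFs (two interlocking one-parameter families plus an interval bookkeeping), making the $\lambda=2$, $k=3$ case verifiable in a few lines rather than by reference to Rogers' paper.
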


In Section \ref{sec:layered-DF}, we introduce a new concept called a layered difference family. This concept proves to be a powerful tool, as it provides a unified perspective on some recent existence proofs for cyclic difference packings and related topics \cite{zfw2022,zcf2025,zcf,zzcfwz2024}. These earlier proofs, which relied on direct constructive methods that appeared mysterious, now find a coherent explanation within our framework (see Remarks \ref{remark-1} and \ref{remark-2}). Using this framework, we establish in Section \ref{sec:multi} the existence of $(v,4,\lambda)$-PDFs for all positive integers $\lambda$, thereby confirming Conjecture \ref{conj:Berm}.

\begin{Theorem}\label{thm:(v,4,lambda)PDF}
For a positive odd integer $v$, a $(v,4,\lambda)$-PDF exists if and only if $\lambda(v-1)\equiv 0\pmod{12}$, $v\geq 13$, and $(v,\lambda)\not\in\{(25,1),(37,1)\}$.
\end{Theorem}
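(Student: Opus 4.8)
The plan is to prove necessity directly and then obtain sufficiency by reducing to a small family of base constructions produced with the layered difference family machinery.

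\emph{Necessity.} Three conditions must be checked. First, a $(v,4,\lambda)$-PDF is in particular a $(v,4,\lambda)$-CDF, so the divisibility $\lambda(v-1)\equiv 0\pmod{12}$ is forced by the block-counting remark following the definition of a CDF. Second, I would argue $v\geq 13$ from the largest difference: the value $(v-1)/2$ must be covered exactly $\lambda$ times by $\Delta^+\F$, but each base block is a $4$-subset all of whose positive differences lie in $[1,(v-1)/2]$, so only the pair formed by its least and greatest elements can attain the span $(v-1)/2$, i.e.\ each block contributes this value at most once. As $\F$ has exactly $\lambda(v-1)/12$ blocks, we need $\lambda(v-1)/12\geq\lambda$, whence $v\geq 13$. (The same count rules out every odd $v<13$ in one stroke.) Finally, the two sporadic exclusions $(v,\lambda)=(25,1),(37,1)$ are exactly Proposition \ref{thm:non-existence-PDF}(ii).

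\emph{Reformulation and reduction in $\lambda$.} For a block $\{a<b<c<d\}$, writing $p=b-a$, $q=c-b$, $r=d-c$, its six positive differences are $\{p,\,q,\,r,\,p+q,\,q+r,\,p+q+r\}$; thus a $(v,4,\lambda)$-PDF is equivalent to a partition of the multiset consisting of $\lambda$ copies of $[1,(v-1)/2]$ into such additive sextuples. The key structural observation is the union principle: the multiset union of a $(v,4,\lambda_1)$-PDF and a $(v,4,\lambda_2)$-PDF is a $(v,4,\lambda_1+\lambda_2)$-PDF. Sorting $v$ by residue modulo $12$, the least admissible $\lambda$ is $1$ for $v\equiv 1$, $2$ for $v\equiv 7$, $3$ for $v\equiv 5,9$, and $6$ for $v\equiv 3,11$, and in each class every admissible $\lambda$ is a multiple of this least value. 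Hence it suffices to build a single PDF at the least admissible $\lambda$ in each residue class; all remaining $\lambda$ then follow by repeated union. The only extra work is at the two orders $v\in\{25,37\}$, where $\lambda=1$ is impossible: there I would exhibit explicit $(v,4,2)$- and $(v,4,3)$-PDFs, which by union generate every $\lambda\geq 2$.

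\emph{The layered engine and the main obstacle.} What remains is to realise, for each residue class, one infinite family of base PDFs. This is where the layered difference family of Section \ref{sec:layered-DF} does the work: I would split $[1,(v-1)/2]$ into consecutive layers, prescribe for each base block which layers its six differences occupy, and package a small seed configuration together with the translation/scaling data needed to inflate it to all orders $v$ in a fixed congruence class while the additive-sextuple partition is preserved layer by layer. Seeded by the small-order solutions of Lemma \ref{lem:k=4-small orders} (and analogous finite tables for $\lambda=2,3,6$), this produces the required families. The hard part is precisely the case $\lambda=1$, $v\equiv 1\pmod{12}$ --- Bermond's conjecture --- where the partition of $[1,6t]$ must be \emph{exact}: the layers have to interlock with neither gaps nor overlaps, and the boundary blocks whose differences straddle two layers must be controlled with care. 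I expect the bulk of the effort to lie in tuning the layered-family parameters uniformly across all residues of $t$ modulo the relevant period, and in clearing by hand the finitely many seeds not already covered by Lemma \ref{lem:k=4-small orders}; the classes $\lambda=2,3,6$ should then fall to the same method with more room to spare, since the higher multiplicity relaxes the packing constraints.
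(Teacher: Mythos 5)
Your necessity argument is correct and coincides with the paper's: block counting forces $\lambda(v-1)\equiv 0\pmod{12}$; the span argument (a block of a PDF has all its positive differences in $[1,(v-1)/2]$, so only the pair formed by its least and greatest elements can realize $(v-1)/2$, whence $\lambda(v-1)/12\geq\lambda$) forces $v\geq 13$; and the exclusions at $(25,1)$, $(37,1)$ are Proposition \ref{thm:non-existence-PDF}(ii). Your reduction of sufficiency is also sound and is exactly the skeleton of the paper's proof in Section \ref{sec:multi}: the minimal admissible $\lambda$ is $1$, $2$, $3$, $6$ according as $v\equiv 1$, $v\equiv 7$, $v\equiv 5,9$, $v\equiv 3,11\pmod{12}$, every admissible $\lambda$ is a multiple of it, multiset union adds multiplicities, and at $v\in\{25,37\}$ every $\lambda\geq 2$ is a nonnegative combination $2a+3b$.

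The genuine gap is that you never construct the four base families; you only announce an intention to do so (``I would split\ldots prescribe\ldots package\ldots I expect the bulk of the effort\ldots''). Those constructions \emph{are} the theorem. In the paper each of them requires an explicit perfect layered difference family as seed (the $(108,4,1)$-, $(36,4,2)$-, $(24,4,3)$- and $(12,4,6)$-PLDFs of Lemmas \ref{lem:108-LDF}, \ref{lem:36-4-2}, \ref{lem:24-4-3}, \ref{lem:12-4-6}), explicit parametrized block families with computer-searched offsets, and tables of sporadic blocks for every residue (Sections \ref{sec:main-construction}, \ref{sec:multi} and the appendices). Worse, your plan for the crucial case $\lambda=1$ --- directly inflating $(v,4,1)$-PDFs, ``seeded by the small-order solutions of Lemma \ref{lem:k=4-small orders}'' --- misidentifies the seeds (they are PLDFs, i.e.\ ordered multisubsets of a small cyclic group, not small PDFs) and glosses over an obstruction the paper had to engineer around: the natural six-family seed for a direct inflation, a $(72,4,1)$-PLDF, does not exist (computer search, noted just before Lemma \ref{lem:108-LDF}). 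The paper's resolution of Bermond's conjecture is therefore \emph{not} a direct PDF inflation: it builds $(m,4,3)$-PSDSs for all $m\geq 5$ from the nine-family $(108,4,1)$-PLDF --- the threshold $c=3$ exempting the parametrized families from producing the awkward differences $1$ and $2$ --- and then converts via Lemma \ref{lem:c=3 to 1}, whose single added block $\{0,1,6m+4,6m+6\}$ supplies $1$, $2$ and the four largest differences. This detour through Theorem \ref{thm:(m,4,3)-PSDS} (or some completed substitute for it) is an essential idea your proposal lacks, and without it the argument does not close.
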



Theorem \ref{thm:(v,4,lambda)PDF} directly yields the existence of $(v,4,\lambda)$-CDFs for all $\lambda\geq 1$ in Section \ref{sec:multi-4}.

\begin{Theorem}\label{thm:4-CDF-lambda}
A $(v,4,\lambda)$-CDF exists if and only if $\lambda(v-1)\equiv 0\pmod{12}$, $v\geq 4$, and $(v,\lambda)\neq (25,1)$.
\end{Theorem}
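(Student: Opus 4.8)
The plan is to derive Theorem \ref{thm:4-CDF-lambda} from Theorem \ref{thm:(v,4,lambda)PDF} together with the known $\lambda=1$ classification, reducing everything to a short list of base constructions. First I would dispose of necessity: the number of base blocks $\lambda(v-1)/12$ must be a nonnegative integer, giving $\lambda(v-1)\equiv 0\pmod{12}$; a $4$-subset forces $v\geq 4$; and the nonexistence of a $(25,4,1)$-CDF is exactly the exception recorded in \cite{zfw2022}. For sufficiency, the crucial structural observation is that base-block multisets add: the disjoint union of a $(v,4,\lambda_1)$-CDF and a $(v,4,\lambda_2)$-CDF is a $(v,4,\lambda_1+\lambda_2)$-CDF. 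Since, for a fixed $v$, the admissible values of $\lambda$ are precisely the positive multiples of $\mu_v:=12/\gcd(12,v-1)$, it suffices to construct a $(v,4,\mu_v)$-CDF for each $v$, with a small number of additional generators in the one place where $\mu_v$ itself turns out to be unavailable.

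The odd orders $v\geq 13$ are then essentially immediate. By Definition \ref{def:PDF} every $(v,4,\lambda)$-PDF is in particular a $(v,4,\lambda)$-CDF, so Theorem \ref{thm:(v,4,lambda)PDF} already supplies a $(v,4,\lambda)$-CDF for every admissible $\lambda$ and every odd $v\geq 13$, with the sole exceptions $(v,\lambda)\in\{(25,1),(37,1)\}$. The pair $(25,1)$ is the genuine exception of the statement. For $(37,1)$, which fails only in the perfect setting, I would simply invoke the $\lambda=1$ classification of \cite{zfw2022}, which guarantees a $(37,4,1)$-CDF; more generally \cite{zfw2022} settles all $\lambda=1$ cases (where the divisibility condition forces $v\equiv 1\pmod{12}$), so the entire odd regime $v\geq 13$ is covered without recourse to additivity.

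It remains to treat the even orders together with the small odd orders $v\in\{5,7,9,11\}$, for which no PDF is available. Here I would exhibit explicit minimal-$\lambda$ generators: the small odd and small even orders are finite in number and admit short direct constructions (for instance $\{0,1,2,3\}$ is a $(5,4,3)$-CDF and $\{0,1,2,4\}$ is a $(7,4,2)$-CDF), after which additivity yields all admissible $\lambda$. I expect the main obstacle to be the two infinite even families, namely a $(v,4,4)$-CDF for $v\equiv 4,10\pmod{12}$ and a $(v,4,12)$-CDF for $v\equiv 0,2,6,8\pmod{12}$. Because PDFs are defined only for odd $v$, these orders receive no leverage from Theorem \ref{thm:(v,4,lambda)PDF} and genuinely require new constructions; this is precisely where I anticipate the layered difference family framework to do the real work, producing the required base blocks uniformly across each congruence class. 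Once these generators are in hand, a final appeal to additivity completes the proof.
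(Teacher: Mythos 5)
Your necessity argument, the additivity of CDFs, the generator set $\mu_v=12/\gcd(12,v-1)$, and the treatment of odd $v\geq 13$ (everything from Theorem \ref{thm:(v,4,lambda)PDF}, with the $(37,4,1)$-CDF and the $\lambda=1$ spectrum taken from \cite{zfw2022}, plus the generators $\lambda=2,3$ at $v=25$) all match the paper. The genuine gap is the even orders: you assert that $v\equiv 4,10\pmod{12}$ and $v\equiv 0,2,6,8\pmod{12}$ ``receive no leverage from Theorem \ref{thm:(v,4,lambda)PDF} and genuinely require new constructions,'' and you leave those constructions as an anticipation. That defers exactly the two infinite families that constitute the bulk of the sufficiency proof, so the proposal is not a proof. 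Moreover the premise is false: the paper closes the even case by a one-line reduction from PDFs of order $2v-1$. If $\mathcal F$ is a $(2v-1,4,\lambda/2)$-PDF, its positive differences fill $\{1,\ldots,v-1\}$ exactly $\lambda/2$ times; reducing its base blocks modulo $v$ (they remain $4$-subsets, since no within-block difference is divisible by $v$), each positive difference $d$ contributes the residues $d$ and $v-d$, so every element of $\mathbb{Z}_v\setminus\{0\}$ is covered $\lambda/2+\lambda/2=\lambda$ times, i.e., one obtains a $(v,4,\lambda)$-CDF. The admissibility conditions correspond exactly, since $(\lambda/2)(2v-2)\equiv 0\pmod{12}$ if and only if $\lambda(v-1)\equiv 0\pmod{12}$. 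Hence your missing generators come for free: a $(2v-1,4,2)$-PDF (note $2v-1\equiv 7\pmod{12}$, so Lemma \ref{thm:(v,4,2)PDF} applies) gives the $(v,4,4)$-CDF for even $v\equiv 4,10\pmod{12}$, and a $(2v-1,4,6)$-PDF from Lemma \ref{thm:(v,4,6)PDF} gives the $(v,4,12)$-CDF for $v\equiv 0,2,6,8\pmod{12}$, in both cases for all $v\geq 7$.

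Two smaller bookkeeping points. First, the doubling needs $2v-1\geq 13$, so $v\in\{4,6\}$ must be done by hand: $\{0,1,2,3\}$ is a $(4,4,4)$-CDF, and for $v=6$ one can take, e.g., two copies of $\{0,1,2,3\}$, two copies of $\{0,2,3,4\}$ and one copy of $\{0,1,3,4\}$ to get a $(6,4,12)$-CDF --- a boundary case that, incidentally, also falls outside the ranges displayed in the paper's own proof, which lists only $v\in\{4,5,7,9\}$ as small cases and invokes the doubling for admissible $v\geq 7$. Second, your finite list of small generators is incomplete as written: you still owe a $(9,4,3)$-CDF (the paper uses $\{0,1,2,5\}$ and $\{0,1,3,7\}$) and an $(11,4,6)$-CDF (obtainable from a $(21,4,3)$-PDF by the same mod-$v$ reduction). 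With the doubling observation and these finitely many blocks supplied, your generator-plus-additivity scaffold becomes a complete proof essentially identical to the paper's.
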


\subsection{Perfect systems of difference sets}

Bermond, Kotzig and Turgeon \cite{bkt} introduced the notion of perfect systems of difference sets as a generalization of perfect difference families, aiming to address the problem of spacing movable antennas in radio astronomy (cf. \cite{bbr}).

In radioastronomy, a series of $m$ successive configurations is used to measure the spatial frequencies from a specific area of the sky. In the $i$-th configuration, $k_i$ movable antennas are employed. The distances between these antennas determine the set of spatial frequencies that can be sampled. Collectively, the $m$ configurations can yield up to $\sum_{i=1}^{m} \binom{k_i}{2}$ unique baselines. To maximize efficiency, the distances between antennas must be chosen such that they form a contiguous set of multiples of a given unit length, ensuring there are no gaps and that no measurement is repeated.

\begin{Definition}
Let $m,k_{1}, k_{2}, \dots, k_{m}$ and $c$ be positive integers. A \emph{perfect system of difference sets} with \emph{threshold} $c$, denoted by $(m,\{k_{1},\dots,k_{m}\},c)$-PSDS, is a collection $\mathcal{A}$ of $m$ subsets $($called \emph{base blocks}$)$ of nonnegative integers, where the $i$-th subset has size $k_i$, $1\leq i\leq m$, such that
$$\Delta^+ \mathcal A:=\bigcup_{A\in\mathcal A}\Delta^+ A:=\bigcup_{A\in\mathcal A}\{x-y : x,y\in A, x>y\}$$
covers every element of $\{c, c+1, \dots, c-1+\sum_{i=1}^{m} \binom{k_i}{2} \}$ exactly once. If $k_{1} = k_{2} = \cdots = k_{m} = k$, then such a system is \emph{regular}, and is abbreviated as an $(m, k, c)$-PSDS.
\end{Definition}

When $c=1$, an $(m,k,1)$-PSDS yields an $(mk(k-1)+1,k,1)$-PDF, and conversely, a $(v,k,1)$-PDF leads to a $((v-1)/(k(k-1)), k, 1)$-PSDS.

The following proposition gives necessary conditions for the existence of PSDSs.

\begin{Proposition}\label{thm:PSDS-nece}
\begin{enumerate}
    \item[$(1)$] {\rm \cite[Proposition 1]{bbg}} If there exists an $(m,3,c)$-PSDS, then $m\geq 2c-1$.  Furthermore, $m \equiv 0,1 \pmod{4}$ if $c$ is odd, and $m \equiv 0,3 \pmod{4}$ if $c$ is even.
    \item[$(2)$] {\rm \cite[Proposition 2.1, 2.2]{bkt}} If there exists an $(m,4,c)$-PSDS, then $m\geq 2c-1$. No $(m,k,c)$-PSDS exists for any $k\geq 6$.
    \item[$(3)$] {\rm \cite[Theorem 2]{kl}} If there exists an $(m,5,c)$-PSDS, then $m \geq 4c$.
\end{enumerate}
\end{Proposition}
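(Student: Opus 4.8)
The plan is to treat all three parts through one common device, the \emph{first-moment (sum-of-differences) identity}, and to supplement it with finer extremal bookkeeping where the naive bound is too weak. First I would normalize each base block to start at $0$, so that a block of size $k$ becomes $\{0=b_0<b_1<\cdots<b_{k-1}\}$ with consecutive gaps $g_j=b_j-b_{j-1}$; the span $b_{k-1}=g_1+\cdots+g_{k-1}$ is then the largest difference the block contributes. Two elementary observations drive everything. Since $\Delta^+\mathcal A$ is exactly the interval $\{c,c+1,\dots,c+N-1\}$ with $N=m\binom{k}{2}$, we have: (i) every difference, in particular every gap $g_j$, lies in $[c,c+N-1]$, so each is $\geq c$ and all $N$ of them are distinct; and (ii) summing the interval gives $S:=\sum_{d=c}^{c+N-1}d=\tfrac{N(2c+N-1)}{2}$, while summing block by block gives $S=\sum_i T_i$ with $T_i=\sum_{j=1}^{k-1}j(k-j)\,g_{i,j}$, because the gap $g_{i,j}$ is counted once for each of the $j(k-j)$ pairs straddling it.

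For $k=3$ this already settles part $(1)$. Here $T_i=2(g_{i,1}+g_{i,2})=2L_i$, where $L_i$ is the span, so $S=2\sum_i L_i$. The $m$ spans are distinct differences, hence $\sum_i L_i$ is at most the sum of the $m$ largest values in $[c,c+3m-1]$; feeding this into $S=2\sum_i L_i$ and simplifying yields exactly $m\geq 2c-1$. The parity statement falls out of the same identity: $S=2\sum_i L_i$ is even, and reducing $S=3mc+\binom{3m}{2}$ modulo $2$ according to $m\bmod 4$ shows that $S$ is even precisely when $m\equiv 0,1\pmod 4$ (for $c$ odd) or $m\equiv 0,3\pmod 4$ (for $c$ even), which is the claim of \cite{bbg}.

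For the lower bounds in parts $(2)$ and $(3)$ I would push the same idea, but now the coefficients $j(k-j)$ are not all equal, so I split $T_i$ into a multiple of the span plus the interior differences. For $k=4$, $T_i=3L_i+g_{i,2}$, and for $k=5$, $T_i=4L_i+2(g_{i,2}+g_{i,3})$; in each case the auxiliary quantities (namely $g_{i,2}$, resp. the middle span $g_{i,2}+g_{i,3}=b_{i,3}-b_{i,1}$) are again genuine differences, distinct across blocks and distinct from the spans. Upper-bounding $S$ by assigning the largest available differences to the highest-coefficient slots (spans first, then the interior quantities) and using the constraint $g_{i,1}+g_{i,3}\geq 2c$ forced by observation (i) gives lower bounds of the right shape. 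The delicate point is that this crude allocation loses a constant factor, so obtaining the sharp thresholds $m\geq 2c-1$ (for $k=4$) and $m\geq 4c$ (for $k=5$) requires the more careful accounting of \cite{bkt,kl}, which tracks several of the largest differences per block simultaneously rather than just the span; I would reproduce their optimization here.

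The main obstacle is the nonexistence in part $(2)$ for all $k\geq 6$, which is genuinely harder than the counting above. One checks that neither the first moment nor the second moment $\sum_d d^2=\sum_i\big(k\sum_t b_{i,t}^2-(\sum_t b_{i,t})^2\big)$ is by itself contradictory, since for large $k$ the required interval sum $S$ sits comfortably inside the range these moments allow; the simple extremal estimates therefore fail. The available structural handle is again observation (i): a difference spanning $\ell$ consecutive gaps is at least $\ell c$, so the only differences below $2c$ are single gaps, forcing a tight census of how the $N$ consecutive values distribute among the $m$ blocks. Converting this census into an outright impossibility for every $c$ and $m$ once $k\geq 6$ is the crux, and I would follow the extremal argument of \cite{bkt} for it. I expect essentially all of the work to lie in this step rather than in the lower bounds.
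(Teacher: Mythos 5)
First, a point of reference: the paper never proves this proposition --- it is quoted verbatim from the literature, with part (1) cited to \cite{bbg}, part (2) to \cite{bkt}, and part (3) to \cite{kl}. So your attempt can only be measured against those sources, not against an in-paper argument. Your treatment of part (1) is correct and essentially the classical one: for $k=3$ each block's six-, sorry, three-difference sum is $2L_i$, the $m$ spans $L_i$ are distinct values in $[c,c+3m-1]$, and comparing $S=3mc+\binom{3m}{2}$ with $2\bigl(mc+\tfrac{5m^2-m}{2}\bigr)$ gives $c\leq(m+1)/2$, i.e.\ $m\geq 2c-1$; reducing $S$ modulo $2$ over the four residue classes of $m$ gives exactly the stated congruences. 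That part stands.

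The genuine gap is that parts (2) and (3) are not proved: at precisely the two places where the content of the proposition lies, you write that you ``would reproduce their optimization'' and ``would follow the extremal argument'' of the cited papers, which is a citation, not a proof. Your concession about the lost constant is also accurate and quantifiable: with your split $T_i=3L_i+g_{i,2}$ and greedy assignment, the inequality $6mc+18m^2-3m\leq 4mc+21m^2-2m$ yields only $m\geq(2c-1)/3$. It is worth noting that the sharp $k=4$ bound is in fact recoverable inside your own first-moment framework by a different decomposition: since the gaps of a block sum to its span, each block's difference-sum equals its span plus the sum of its $\binom{k-1}{2}$ \emph{composite} differences (sums of at least two consecutive gaps), and these $3m$ composites are globally distinct; greedy allocation then gives $S\leq 4mc+19m^2-2m$, hence $2mc\leq m^2+m$ and $m\geq 2c-1$ exactly. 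However, the same computation for $k=5$ again produces only $m\geq 2c-1$, far short of $m\geq 4c$, and for $k\geq 6$ no moment estimate of this kind produces a contradiction for all $m$ and $c$ --- a difficulty you correctly diagnose but do not resolve. As a standalone argument, your proposal therefore establishes only part (1); the sharp thresholds in (2) and (3) and the nonexistence for $k\geq 6$ remain imported from \cite{bkt} and \cite{kl}, exactly as the paper itself imports them.
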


Following the work of Bermond, Brouwer and Germa \cite{bbg}, Rogers \cite{rogers86} established the existence of an $(m,3,c)$-PSDS for any $c\geq 1$ by employing Langford sequences developed in \cite{Simpson}.

\begin{Theorem}\label{thm:3-PSDS} {\rm \cite[Theorem 5]{rogers86}}
For each $c\geq 1$, an $(m,3,c)$-PSDS exists if and only if $m\geq 2c-1$, and $m \equiv 0,1 \pmod{4}$ if $c$ is odd, and $m \equiv 0,3 \pmod{4}$ if $c$ is even.
\end{Theorem}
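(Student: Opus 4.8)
The plan is to reduce the construction of an $(m,3,c)$-PSDS to that of a Langford sequence, and then to appeal to the classical determination of the Langford spectrum. For block size $3$, a base block $\{x,y,z\}$ with $x<y<z$ contributes exactly the three positive differences $y-x$, $z-y$ and $z-x=(y-x)+(z-y)$; thus it is entirely encoded by the \emph{sum-triple} $\{p,q,p+q\}$ with $p=y-x$ and $q=z-y$, and conversely every such sum-triple is realized by the block $\{0,p,p+q\}$. Consequently, an $(m,3,c)$-PSDS exists if and only if the set of $3m$ consecutive integers $[c,\,c+3m-1]$ can be partitioned into $m$ sum-triples $\{p_i,q_i,p_i+q_i\}$.

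A Langford sequence $L(c,m)$ is a partition of $[1,2m]$ into $m$ pairs $\{a_i,b_i\}$, $a_i<b_i$, whose differences $b_i-a_i$ exhaust $[c,\,c+m-1]$, each value occurring once. From such a sequence I would build the triples $T_i=\{\,b_i-a_i,\ a_i+s,\ b_i+s\,\}$ with shift $s=c+m-1$. Each $T_i$ is a sum-triple since $(b_i-a_i)+(a_i+s)=b_i+s$; moreover $b_i-a_i\leq c+m-1<c+m\leq a_i+s$, so the smallest coordinate of $T_i$ lies in $[c,\,c+m-1]$ while the other two lie in $[1+s,\,2m+s]=[c+m,\,c+3m-1]$. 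Because the differences $b_i-a_i$ exhaust $[c,c+m-1]$ and the entries $a_i,b_i$ exhaust $[1,2m]$, the $3m$ coordinates of $T_1,\dots,T_m$ are pairwise distinct and fill $[c,\,c+3m-1]$ exactly once. Hence $L(c,m)$ yields an $(m,3,c)$-PSDS.

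The sufficiency then rests on the existence of $L(c,m)$ for every admissible pair $(c,m)$. This is precisely the classical Langford spectrum established in \cite{Simpson} (the case $c=1$ being Skolem sequences and $c=2$ the original Langford problem): $L(c,m)$ exists if and only if $m\geq 2c-1$, and $m\equiv 0,1\pmod 4$ when $c$ is odd while $m\equiv 0,3\pmod 4$ when $c$ is even. These coincide exactly with the conditions in the statement, so whenever $m\geq 2c-1$ and the displayed congruence holds, a Langford sequence $L(c,m)$ exists and the construction above delivers the required PSDS. For the converse direction I would simply invoke Proposition \ref{thm:PSDS-nece}(1); the congruence restriction there reflects the fact that $\sum_{\ell=c}^{c+3m-1}\ell$ equals twice the sum of the largest elements of the $m$ sum-triples and must therefore be even, which forces exactly the stated residues of $m$.

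The entire difficulty is concentrated in the Langford existence theorem that is being cited. The reduction and the necessity are both immediate, but proving that $L(c,m)$ exists throughout its full spectrum --- especially at the extremal length $m=2c-1$ and for the smallest admissible lengths, and handling the two residue classes through ordinary and ``hooked'' constructions --- requires the detailed explicit and recursive building blocks of \cite{Simpson}, which I would invoke rather than reconstruct.
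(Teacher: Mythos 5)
Your proposal is correct and takes essentially the same route as the paper, which gives no proof of this theorem beyond citing \cite[Theorem 5]{rogers86} and noting that Rogers established it via the Langford sequences of \cite{Simpson}: your shifted construction $T_i=\{b_i-a_i,\ a_i+c+m-1,\ b_i+c+m-1\}$ is precisely the standard Langford-to-PSDS reduction underlying that result, and your verification (disjoint ranges, exact coverage of $[c,c+3m-1]$) is sound. The necessity via Proposition \ref{thm:PSDS-nece}(1), together with your parity computation showing the sum of $[c,c+3m-1]$ equals twice the sum of the triple maxima, matches the cited source \cite{bbg} as well.
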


This paper examines the existence of $(m,4,3)$-PSDSs. On one hand, Bermond, Kotzig and Turgeon \cite{bkt} pointed out that from a practical point of view, $c=2$ or $3$ is desirable. On the other hand, they observed that every $(m,4,3)$-PSDS implies an $(m+1,4,1)$-PSDS, i.e., a $(12m+13,4,1)$-PDF.

\begin{Lemma}\label{lem:c=3 to 1}{\rm \cite[Proposition 4.2]{bkt}}
If there exists an $(m,4,3)$-PSDS, then there exists an $(m+1,4,1)$-PSDS.
\end{Lemma}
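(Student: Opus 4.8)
The plan is to leave the $m$ base blocks of the given $(m,4,3)$-PSDS untouched and simply adjoin one new $4$-element block that supplies exactly the differences absent from the target range. Concretely, an $(m,4,3)$-PSDS has $\sum_{i=1}^m\binom{4}{2}=6m$, so its positive differences cover $\{3,4,\dots,6m+2\}$ once each; an $(m+1,4,1)$-PSDS must instead cover $\{1,2,\dots,6m+6\}$ once each. The two target intervals differ precisely in the six-element set $\{1,2\}\cup\{6m+3,6m+4,6m+5,6m+6\}$, which is exactly one block's worth of differences since $\binom{4}{2}=6$. As positive differences are translation-invariant, the existing blocks require no modification, and the whole task reduces to realizing these six prescribed differences by a single $4$-subset.

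First I would produce such a block. Writing the consecutive gaps of a block $\{a,b,c,d\}$ (with $a<b<c<d$) as $p=b-a$, $q=c-b$, $r=d-c$, its six positive differences are $\{p,q,r,p+q,q+r,p+q+r\}$, and I need this multiset to equal $\{1,2,6m+3,6m+4,6m+5,6m+6\}$. The choice $p=1$, $q=6m+3$, $r=2$ does the job, yielding the block
\[
B=\{0,\,1,\,6m+4,\,6m+6\},
\]
for which a direct check gives $\Delta^+B=\{1,\,2,\,6m+3,\,6m+4,\,6m+5,\,6m+6\}$.

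Finally I would let $\mathcal A'$ be the collection of the $m$ original base blocks together with $B$, so that $\mathcal A'$ consists of $m+1$ blocks each of size $4$. Since the two contributing difference sets $\{3,\dots,6m+2\}$ and $\{1,2,6m+3,\dots,6m+6\}$ are disjoint and their union is $\{1,2,\dots,6m+6\}$, the multiset $\Delta^+\mathcal A'$ covers every element of $\{1,2,\dots,6m+6\}$ exactly once, which is precisely the defining property of an $(m+1,4,1)$-PSDS. The only real content is the small integer-gap puzzle of the second paragraph, namely confirming that the two tiny differences $\{1,2\}$ and the four large differences $\{6m+3,\dots,6m+6\}$ can be simultaneously realized by a single $4$-set; once the explicit block $B$ is displayed, there is no further obstacle.
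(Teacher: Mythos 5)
Your proof is correct and follows exactly the paper's approach: the paper's proof is the one-line observation that adjoining $\{0,1,6m+4,6m+6\}$ to the given $(m,4,3)$-PSDS yields an $(m+1,4,1)$-PSDS, which is precisely the block $B$ you constructed. Your version simply makes explicit the difference-counting that justifies why this single block fills the gap $\{1,2\}\cup\{6m+3,\ldots,6m+6\}$.
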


\begin{proof}
Let $\cal A$ be an $(m,4,3)$-PSDS. Then $\mathcal A\cup \{\{0,1,6m+4,6m+6\}\}$ forms an $(m+1,4,1)$-PSDS.
\end{proof}

We prove the following theorem in Section \ref{sec:main-construction}.

\begin{Theorem}\label{thm:(m,4,3)-PSDS}
An $(m,4,3)$-PSDS exists if and only if $m\geq 5$.
\end{Theorem}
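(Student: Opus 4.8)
The necessity is immediate from Proposition \ref{thm:PSDS-nece}(2): taking $c=3$ forces $m\geq 2c-1=5$. The entire content is therefore the construction of an $(m,4,3)$-PSDS for every $m\geq 5$. I emphasize at the outset that Theorem \ref{thm:(v,4,lambda)PDF} cannot be invoked here: that result produces perfect difference families, equivalently PSDSs of threshold $1$, whereas the threshold here is $3$, and there is no elementary device that lowers or raises the threshold of a PSDS. Indeed the implication runs the other way, since by Lemma \ref{lem:c=3 to 1} the present theorem \emph{feeds} the family of $(12m+13,4,1)$-PDFs that underlies the $\lambda=1$ case of Theorem \ref{thm:(v,4,lambda)PDF}. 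So a genuine direct construction is required.

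The plan is to build these systems inside the layered difference family framework of Section \ref{sec:layered-DF}. Each base block $B=\{b_0<b_1<b_2<b_3\}$ contributes the six positive differences $b_1-b_0,\,b_2-b_1,\,b_3-b_2,\,b_2-b_0,\,b_3-b_1,\,b_3-b_0$, and the goal is to make the $6m$ differences of $\mathcal A$ tile the interval $[3,\,6m+2]$ exactly. I would realize most blocks as a one-parameter family $B_i$, $0\leq i<r$, in which each of the six difference coordinates is an affine function of $i$ with the \emph{same} slope $r$; as $i$ ranges, each coordinate sweeps an arithmetic progression of common difference $r$ and length $r$, and the six progressions are assigned offsets so that they interleave to tile a full subinterval of the target. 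A bounded number of ad hoc \emph{corrector} blocks then covers the short differences near the threshold $3$ and any residue left over at the top end.

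Concretely, I would fix a small modulus $N$ (dictated by how many independent affine patterns are needed), split the claim into the residue classes $m\equiv\rho\pmod N$, and in each class exhibit a seed system together with the parametric family above, arranged so that increasing $m$ by $N$ merely lengthens each swept progression by $N$. This reduces the theorem to verifying finitely many seed systems---explicit $(m,4,3)$-PSDSs for $5\leq m< 5+N$, and possibly a few further small values---together with one tiling identity per residue class.

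The main obstacle is the design of the affine patterns so that the six swept arithmetic progressions, taken over all residue classes, partition $[3,6m+2]$ \emph{exactly}: every integer in the range must be hit once, and nothing outside it may appear. This is an exact-covering condition on the offsets and slopes, and matching both the low end (the differences $3,4,5,\dots$) and the high end (the largest differences, which arise from the spans $b_3-b_0$) without gaps or collisions is delicate; the interface between the parametric bulk and the corrector blocks is precisely where an error would hide. Once the tiling identity is pinned down in each residue class and the handful of seed systems are checked, perfectness of $\Delta^+\mathcal A$ follows and the induction on $m$ closes.
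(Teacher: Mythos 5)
Your high-level architecture does match the paper's: necessity from Proposition \ref{thm:PSDS-nece}(2), then a direct construction combining one-parameter families of blocks with a bounded number of sporadic corrector blocks, a split of $m$ into residue classes modulo a small modulus, and a finite list of seed systems for small $m$ (the paper uses modulus $9$, seeds for $5\le m\le 16$, and computer-found correctors). You are also right that Theorem \ref{thm:(v,4,lambda)PDF} cannot be invoked, since the logical dependence runs the other way. However, the technical core of your proposal is arithmetically impossible. You require a family $B_i=\{e_0(i),e_1(i),e_2(i),e_3(i)\}$, each entry affine in $i$, such that all six difference coordinates are affine in $i$ \emph{with the same slope} $r$. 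Writing $s_j$ for the coefficient of $i$ in $e_j$, the slope of $e_2-e_0$ is $(s_2-s_1)+(s_1-s_0)$; so if $s_1-s_0$ and $s_2-s_1$ both equal $r$ (or even $\pm r$), then $s_2-s_0\in\{-2r,0,2r\}$, never $\pm r$. Thus already for triples, let alone quadruples, the pairwise differences of affine entries cannot all move at the same rate, and your tiling scheme --- six progressions of common difference $r$ from a single family interleaving to tile an interval --- cannot even be set up.

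The mechanism that actually works, and that Section \ref{sec:layered-DF} is designed to encode, is the opposite of your same-slope requirement: the entries carry slopes $0,1,2,3$ (i.e.\ $e_j(i)=b_jt+ji+\epsilon_j$), so the six differences have slopes $1,1,1,2,2,3$. A slope-$1$ difference tiles a contiguous interval on its own, but a slope-$s$ difference with $s\in\{2,3\}$ covers only a $1/s$ fraction of an interval, so slope-$2$ differences must be paired across \emph{two different families} (offsets of opposite parity) and slope-$3$ differences combined across \emph{three}. This cross-family bookkeeping is exactly what the fractional coefficients $\frac{1}{s_2-s_1}$ in the definition of a layered difference family record, and the perfect variant (PLDF) is what keeps every swept difference positive and inside $[3,6m+2]$. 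The paper's proof then rests on one finite object your sketch never identifies: a $(108,4,1)$-PLDF with $9$ base blocks (Lemma \ref{lem:108-LDF}; a computer search shows no $(72,4,1)$-PLDF exists, which is why order $108$ is used). Its blocks dictate the coefficients of $t$ in nine parametric families of length about $m/9$ each, which produces the residue split $m=9t+x$ with $-1\le x\le 7$ and leaves $17$ or $18+x$ sporadic blocks per class (Lemmas \ref{lem:m=8(mod9)} and \ref{lem:other-PSDS}). Without this ingredient, or a correct replacement for the impossible same-slope scheme, the ``tiling identity per residue class'' you defer to cannot be pinned down, so the proposal does not yield a proof.
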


\subsection{Organization of this paper}

This paper is structured as follows. Section \ref{sec:3-PDF} presents a new construction for $(v,3,1)$-PDFs. We then abstract this construction in Section \ref{sec:layered-DF} to define layered difference families, a powerful new tool for constructing cyclic difference packings. This tool is then employed in Section \ref{sec:main-construction} to provide a direct construction of $(m,4,3)$-PSDSs for all $m\geq 5$, thus proving Theorem \ref{thm:(m,4,3)-PSDS}. The same method is applied in Section \ref{sec:multi} to establish the existence of $(v,4,\lambda)$-PDFs for all $\lambda\geq 1$. Section \ref{sec:application} explores applications of perfect difference families to additive sequences of permutations, perfect difference matrices, difference triangle sets, optical orthogonal codes, geometric orthogonal codes, and graceful graph labelings. Concluding remarks are provided in Section \ref{con:conluding}.

\section{A direct construction for $(v,3,1)$-PDFs}\label{sec:3-PDF}

We shall introduce a new class of difference families, termed layered difference families, in Section \ref{sec:layered-DF}. These families serve as a powerful tool for constructing cyclic difference packings. Given that their definition is significantly more intricate than that of classical difference families, we first present a new construction for $(v,3,1)$-PDFs in this section, and then abstract this construction to formally define layered difference families in the next section.

As noted in Section \ref{sec:1.1}, the existence of $(v,3,1)$-PDFs was established independently by Kotzig and Turgeon \cite{kt} and by Bermond, Brouwer, and Germa \cite{bbg} using Skolem sequences. In fact, an earlier direct construction for cyclic Steiner triple systems was given by Hwang and Lin \cite{hl}. One can verify that their base blocks actually form a $(v,3,1)$-PDF for all $v \equiv 1, 7 \pmod{24}$. In this section, we present a new direct construction for $(v,3,1)$-PDFs. Our approach differs from that of Hwang and Lin and offers a more concise alternative.

\begin{table}[!b]\centering
\renewcommand{\arraystretch}{1.1}
\begin{tabular}{|c|c|c|}\hline
$\Delta^+ F_{1,i}$  & $\Delta^+{\cal F}_1$ & $\Delta^-{\cal F}_1$ \\\hline					
$3t+i$              & $[3t+1,4t-1]\setminus \{\frac{7t}{2}\}$ & $[8t+2,9t]\setminus \{\frac{17t}{2}+1\}$ \\
$2i$                & $[2,2t-2]_{2}\setminus \{t\}$ & $[10t+3,12t-1]_{2}\setminus \{11t+1\}$ \\
$3t-i$              & $[2t+1,3t-1]\setminus \{\frac{5t}{2}\}$ & $[9t+2,10t]\setminus\{\frac{19t}{2}+1\}$ \\\hline \hline
$\Delta^+ F_{2,i}$ & $\Delta^+{\cal F}_2$ & $\Delta^-{\cal F}_2$  \\\hline
$5t+i$ & $[5t,6t-1]$ & $[6t+2,7t+1]$ \\
$2i+1$ & $[1,2t-1]_{2}$ & $[10t+2,12t]_{2}$ \\
$5t-i-1$ & $[4t,5t-1]$ & $[7t+2,8t+1]$ \\\hline
\end{tabular} \hspace{0.6cm}
\begin{tabular}{|c|c|}\hline
$\Delta^+ B_{1}$ & $\Delta^+ B_{2}$ \\\hline
$t$ & $\frac{5t}{2}$    \\
$3t$ & $6t$   \\
$2t$ & $\frac{7t}{2}$  \\\hline
\end{tabular}
\caption{Differences from base blocks in Proposition \ref{prop:3-PDF}} \label{tab:3-PDF-1}
\end{table}

\begin{Proposition}\label{prop:3-PDF}
For any positive integer $v\equiv 1 \pmod{24}$, there exists a $(v,3,1)$-PDF.
\end{Proposition}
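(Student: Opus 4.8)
The plan is to give the construction explicitly and reduce the proof to a single combinatorial verification. Write $v=12t+1$; since $v\equiv 1\pmod{24}$ is equivalent to $12t\equiv 0\pmod{24}$, i.e.\ to $t$ being even, all of the half-integers $t/2,\,5t/2,\,7t/2,\,17t/2+1,\,19t/2+1$ occurring in Table~\ref{tab:3-PDF-1} are genuine integers. I would take the parametrized blocks $F_{1,i}=\{0,2i,3t+i\}$ for $i\in[1,t-1]\setminus\{t/2\}$ and $F_{2,i}=\{0,2i+1,5t+i\}$ for $i\in[0,t-1]$, together with the two sporadic blocks $B_1=\{0,t,3t\}$ and $B_2=\{0,5t/2,6t\}$; reading off the three pairwise differences of each block reproduces exactly the columns $\Delta^+\F_1,\Delta^+\F_2,\Delta^+ B_1,\Delta^+ B_2$ of the table. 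Counting gives $(t-2)+t+2=2t=(v-1)/6$ blocks, which is the right number for a $(v,3,1)$-CDF.

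The core of the argument is to show that $\Delta^+\F:=\Delta^+\F_1\cup\Delta^+\F_2\cup\Delta^+ B_1\cup\Delta^+ B_2$ meets every element of $[1,6t]$ exactly once, and I would organize this by ranges. In the low range $[1,2t-1]$ the middle row of $\F_2$ supplies all odd values and the middle row of $\F_1$ all even values except $t$, the single hole being filled by $B_1$. Next, the bottom and top rows of $\F_1$ cover $[2t+1,3t-1]$ and $[3t+1,4t-1]$ with interior holes only at $5t/2$ and $7t/2$, while the bottom and top rows of $\F_2$ cover $[4t,5t-1]$ and $[5t,6t-1]$ completely. The values left over, namely $2t$ and $3t$ between these intervals and $5t/2,7t/2,6t$, are furnished precisely by $B_1$ and $B_2$. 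Checking that these pieces are pairwise disjoint and together exhaust $[1,6t]$ is then routine.

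Finally, I would note that since every block has span at most $6t=(v-1)/2$, each positive difference $d$ lies in $[1,6t]$ and, read modulo $v$, the block contributes both $d$ and $v-d\in[6t+1,12t]$ to $\Delta\F$. Thus the one fact that $\Delta^+\F$ tiles $[1,6t]$ simultaneously shows that $\F$ is perfect and that $\Delta\F$ covers every nonzero element of $\Z_v$ exactly once, so $\F$ is a $(v,3,1)$-CDF and hence a $(v,3,1)$-PDF; the $\Delta^-\F_1,\Delta^-\F_2$ columns of Table~\ref{tab:3-PDF-1} merely record the complementary images $v-d$ and serve as a consistency check. There is no deep obstacle here: the work is entirely the interval bookkeeping of the previous paragraph (the even/odd split in the low range and its mirror in the high range being the only delicate point), plus separately handling the degenerate value $t=2$, where $\F_1$ is empty and the construction collapses to the four blocks $\{0,1,10\},\{0,3,11\},\{0,2,6\},\{0,5,12\}$, a valid $(25,3,1)$-PDF.
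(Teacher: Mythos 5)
Your proposal is correct and is essentially the paper's own proof: you use exactly the same blocks $F_{1,i}=\{0,3t+i,2i\}$, $F_{2,i}=\{0,5t+i,2i+1\}$, $B_1=\{0,t,3t\}$, $B_2=\{0,\frac{5t}{2},6t\}$, and the same interval-by-interval bookkeeping that the paper records in Table~\ref{tab:3-PDF-1}. Your extra remarks (that tiling $[1,6t]$ by positive differences automatically yields the CDF property, and the explicit check at $t=2$ where $\F_1$ is empty) are sound but not needed beyond what the paper's verification already covers.
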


\begin{proof}
Let $v=12t+1$, where $t\geq 2$ is even. A $(12t+1,3,1)$-PDF consists of $2t$ base blocks. The first $2t-2$ base blocks are listed below:
\begin{center}
\begin{tabular}{llll}
$F_{1,i}=\{0$, & $3t+i$, & $2i\}$, & $1\leq i\leq t-1$ and $i\neq t/2$; \\
$F_{2,i}=\{0$, & $5t+i$, & $2i+1\}$, & $0\leq i\leq t-1$. \\
\end{tabular}
\end{center}
The remaining 2 base blocks are $B_{1}=\{0,t,3t\}$ and $B_{2}=\{0,\frac{5t}{2},6t\}$.

To facilitate verification, all differences arising from these base blocks are presented in Table \ref{tab:3-PDF-1}, where $\F_1=\{F_{1,i}: 1\leq i\leq{t-1}, i\neq t/2\}$ and $\F_2=\{F_{2,i}: 0\leq i\leq{t-1}\}$. The positive differences cover each element in the set $\{1,2,\ldots,(v-1)/2\}$ exactly once.
\end{proof}

In the proof of Proposition \ref{prop:3-PDF}, except for the two sporadic base blocks $B_1$ and $B_2$, all the remaining base blocks of a $(v,3,1)$-PDF are divided into two parts, denoted as $\F_1$ and $\F_2$. Each base block in $\F_r$ with $r=1,2$ is of the form
\begin{align}\label{eqn:1}
\{b_{r0}\cdot t+0\cdot i+\epsilon_{r0},\ \ \ \ \ b_{r1}\cdot t+1\cdot i+\epsilon_{r1},\ \ \ \ \ b_{r2}\cdot t+2\cdot i+\epsilon_{r2}\}, \tag{$\dagger$}
\end{align}
where $i$ runs over some set $I_r$. Without loss of generality, we set $b_{r0}=\epsilon_{r0}=0$.

Our construction consists of two steps.  The first step is to select appropriate parameters $b_{r1}$, $b_{r2}$, $\epsilon_{r1}$, $\epsilon_{r2}$, and $I_r$ to generate almost all base blocks of a $(v,3,1)$-PDF. This is done in two stages: first, choosing $b_{r1}$ and $b_{r2}$ (as coefficients of $t$) to form a layered difference family, defined in Section \ref{sec:layered-DF}, and then selecting $\epsilon_{r1}$, $\epsilon_{r2}$, and $I_r$. Typically, $I_1$ and $I_2$ are selected as subsets of the integer interval $[1,t]$ by removing a few values from its two endpoints or several values from its middle.

Once $b_{r1}$ and $b_{r2}$ are determined, we can choose $\epsilon_{r1}$ and $\epsilon_{r2}$. The parameters $\epsilon_{r1}$ and $\epsilon_{r2}$ serve to shift the intervals formed by the differences, ensuring that these intervals are pairwise disjoint. Consequently, they are typically small integers that can be selected manually or with the aid of a computer. We illustrate the selection of $\epsilon_{r1}$ and $\epsilon_{r2}$ by constructing a $(v,3,1)$-PDF with $v\equiv 7 \pmod{24}$ in the following proposition. The method for selecting $b_{r1}$ and $b_{r2}$ will be presented after this proposition.

\begin{Proposition}\label{porp:3-PDF-1}
For any positive integer $v\equiv 7 \pmod{24}$ and $v\geq 55$, there exists a $(v,3,1)$-PDF.
\end{Proposition}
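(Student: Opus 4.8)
The plan is to mirror the construction of Proposition~\ref{prop:3-PDF}, transplanting it from the residue class $v\equiv 1$ to $v\equiv 7\pmod{24}$. Writing $v=12t+7$, the congruence $v\equiv 7\pmod{24}$ forces $t$ to be even, while $v\geq 55$ gives $t\geq 4$; the PDF then consists of $(v-1)/6=2t+1$ base blocks whose positive differences must tile $\{1,2,\ldots,6t+3\}$ exactly once. The salient new feature is that the upper endpoint $6t+3$ is now \emph{odd}, so the target contains $3t+2$ odd values against $3t+1$ even values (one more odd than in the $v\equiv 1$ case, where the top value $6t$ is even). Reconciling this parity shift is precisely what the proof must arrange.

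Following the template~(\ref{eqn:1}), I would place the bulk of the base blocks in two families
$$\F_r=\{F_{r,i}:i\in I_r\},\qquad F_{r,i}=\{0,\ b_{r1}t+i+\epsilon_{r1},\ b_{r2}t+2i+\epsilon_{r2}\},\qquad r=1,2,$$
and adjoin a bounded number of sporadic base blocks to patch the remaining gaps. The first step reuses the same coefficients $b_{r1},b_{r2}$ of $t$ that underlie the $v\equiv1$ construction: these constitute a layered difference family (in the sense of Section~\ref{sec:layered-DF}) and fix the coarse positions of the three difference intervals produced by each family --- one interval of small differences (the values $2i$ or $2i+1$) and two intervals of large differences clustered about $b_{r1}t$. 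The genuinely new work is the second step: choosing the shifts $\epsilon_{r1},\epsilon_{r2}$ (small integers) and the index sets $I_r$ (an interval such as $[1,t-1]$ or $[0,t-1]$ with a few endpoint or central values deleted) so that all these intervals become pairwise disjoint and their union, together with the differences of the sporadic blocks, is exactly $\{1,\ldots,6t+3\}$.

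Concretely, I would let one family supply the odd small differences and two contiguous large intervals, and the other supply the even small differences and two further large intervals, tuning the $\epsilon$'s so that the four large intervals stack into a single contiguous run reaching from roughly $2t$ up to $6t+3$. The handful of holes created by deleting central indices from $I_r$ (exactly as $t$, $5t/2$, $7t/2$ were excised in Proposition~\ref{prop:3-PDF}), together with the three new top values $6t+1,6t+2,6t+3$, would be furnished by the sporadic base blocks $B_1,B_2,\dots$. Verification would be packaged in a difference table analogous to Table~\ref{tab:3-PDF-1}, listing each $\Delta^+F_{r,i}$ and $\Delta^-F_{r,i}$ as an explicit interval in $t$ and confirming by inspection that $\Delta^+\F$ tiles $\{1,\ldots,6t+3\}$ exactly once, which by definition makes $\F$ a $(v,3,1)$-PDF (the $\Delta^-$ columns provide a cross-check but follow automatically, since $v$ is odd and covering the positive differences once forces the rest).

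The main obstacle is the bookkeeping forced by the parity change. With the shifts essentially zero, the $v\equiv1$ layout tiles cleanly because its top value is even; moving the endpoint to the odd value $6t+3$ introduces a parity mismatch at the junction between the small- and large-difference ranges and leaves three uncovered values at the top, so the $\epsilon_{r1},\epsilon_{r2}$ must be retuned to realign the interval endpoints and the sporadic blocks redesigned to absorb the surplus. Verifying that, after retuning, no large interval overshoots $6t+3$ or collides with a small interval, and that every endpoint matches across the full range, is the delicate part; for the few smallest values of $t$ at which the generic interval pattern degenerates I would simply exhibit the base blocks and check them directly, as is standard. Once the $\epsilon$'s are fixed, the tiling verification is routine.
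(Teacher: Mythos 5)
Your strategy is exactly the paper's: keep the coefficients $(0,3,0)$ and $(0,5,0)$ of $t$ from Proposition \ref{prop:3-PDF} (the $(12,3,1)$-LDF of Lemma \ref{lem:12,3,1-LDF}), give the two small-difference shifts opposite parities, retune the remaining shifts so the large intervals realign, trim the index sets, and absorb the leftover values (including the three new top values near $6t+3$) with a bounded number of sporadic blocks, verified through a difference table. The paper carries this out with $\epsilon_{11}=1$, $\epsilon_{12}=0$, $\epsilon_{21}=3$, $\epsilon_{22}=1$, $I_1=[2,t-2]\setminus\{t/2\}$, $I_2=[1,t-2]$, and seven sporadic blocks found by computer search.

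The genuine gap is that you never produce the data, and for a constructive existence statement the data is the proof. The existence of shifts, index sets, and sporadic blocks whose positive differences are \emph{exactly} the uncovered values is asserted ("would be furnished by the sporadic base blocks") but not established: nothing in the LDF framework guarantees that such a completion exists, and this patching step is precisely where constructions of this type can fail --- compare the nonexistence of $(25,4,1)$- and $(37,4,1)$-PDFs, or the paper's remark that no $(72,4,1)$-PLDF exists even though a $(72,4,1)$-LDF does. A complete proof must exhibit the blocks, e.g. $F_{1,i}=\{0,\,3t+i+1,\,2i\}$ for $2\le i\le t-2$, $i\ne t/2$, and $F_{2,i}=\{0,\,5t+i+3,\,2i+1\}$ for $1\le i\le t-2$, together with the seven sporadic blocks $\{0,1,2t-1\}$, $\{0,2,4t+2\}$, $\{0,2t,4t+1\}$, $\{0,2t+2,5t+2\}$, $\{0,\tfrac{5t}{2}+1,6t+2\}$, $\{0,t,5t+3\}$, $\{0,3t+1,6t+3\}$, and then check that the positive differences tile $[1,6t+3]$. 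Without such an explicit exhibit (or an abstract argument that a completion always exists, which you do not give), the proposal proves the statement for no value of $v$.
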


\begin{proof}
Let $v=12t+7$, where $t\geq 4$ is even. A $(12t+7,3,1)$-PDF consists of $2t+1$ base blocks. The first $2t-6$ base blocks are listed below:
\begin{center}
\begin{tabular}{llll}
$F_{1,i}=\{0$, &$3t+i+1$, &$2i\}$, & $2\leq i\leq t-2$ and $i\neq t/2$; \\
$F_{2,i}=\{0$, &$5t+i+3$, &$2i+1\}$, & $1\leq i\leq t-2$.
\end{tabular}
\end{center}
The remaining 7 base blocks are
\begin{center}
\begin{tabular}{lllll}
$\{0,1,2t-1\}$,
&$\{0,2,4t+2\}$,
&$\{0,2t,4t+1\}$,
&$\{0,2t+2,5t+2\}$,
&$\{0,\frac{5t}{2}+1,6t+2\}$,\\
$\{0,t,5t+3\}$,
&$\{0,3t+1,6t+3\}$.
\end{tabular}
\end{center}

Here, both $F_{1,i}$ and $F_{2,i}$ are of the form specified in \eqref{eqn:1}, with the parameters $b_{r1}$ and $b_{r2}$ set to the same values as those in the proof of Proposition \ref{prop:3-PDF}. To illustrate how to choose $\epsilon_{r1}$ and $\epsilon_{r2}$ after fixing $b_{r1}$ and $b_{r2}$, we rewrite $F_{1,i}$ and $F_{2,i}$ as follows:
\begin{center}
\begin{tabular}{llll}
$F_{1,i}=\{0$, &$3t+i+\epsilon_{11}$, &$2i+\epsilon_{12}\}$, & $2\leq i\leq t-2$ and $i\neq t/2$; \\
$F_{2,i}=\{0$, &$5t+i+\epsilon_{21}$, &$2i+\epsilon_{22}\}$, & $1\leq i\leq t-2$.
\end{tabular}
\end{center}
We list all positive differences arising from $\F_1$ and $\F_2$ in Table \ref{tab:3-PDF-2-2}, where $\F_1=\{F_{1,i}: 2\leq i\leq{t-2}, i\neq t/2\}$ and $\F_2=\{F_{2,i}: 1\leq i\leq{t-2}\}$.

\begin{table}[htb]\centering
\renewcommand{\arraystretch}{1.2}
\begin{tabular}{|c|c|}\hline
$\Delta^+ F_{1,i}$  & $\Delta^+{\cal F}_1$  \\\hline					
$3t+i+\epsilon_{11}$              & $[3t+2+\epsilon_{11},4t-2+\epsilon_{11}]\setminus \{\frac{7t}{2}+\epsilon_{11}\}$  \\
$2i+\epsilon_{12}$                & $[4+\epsilon_{12},2t-4+\epsilon_{12}]_{2}\setminus \{t+\epsilon_{12}\}$  \\
$3t-i+\epsilon_{11}-\epsilon_{12}$              & $[2t+2+\epsilon_{11}-\epsilon_{12},3t-2+\epsilon_{11}-\epsilon_{12}]\setminus \{\frac{5t}{2}+\epsilon_{11}-\epsilon_{12}\}$ \\\hline \hline
$\Delta^+ F_{2,i}$ & $\Delta^+{\cal F}_2$  \\\hline
$5t+i+\epsilon_{21}$ & $[5t+1+\epsilon_{21},6t-2+\epsilon_{21}]$  \\
$2i+\epsilon_{22}$ & $[2+\epsilon_{22},2t-4+\epsilon_{22}]_{2}$  \\
$5t-i+\epsilon_{21}-\epsilon_{22}$ & $[4t+2+\epsilon_{21}-\epsilon_{22},5t-1+\epsilon_{21}-\epsilon_{22}]$ \\\hline
\end{tabular}
\caption{Positive differences from the first $2t-6$ base blocks in Proposition \ref{porp:3-PDF-1}} \label{tab:3-PDF-2-2}
\end{table}

First, we need to ensure that the intervals $[4+\epsilon_{12},2t-4+\epsilon_{12}]_{2}$ and $[2+\epsilon_{22},2t-4+\epsilon_{22}]_{2}$ are disjoint. To achieve this, $\epsilon_{12}$ and $\epsilon_{22}$ must have different parity. Here we take $\epsilon_{12}=0$ and $\epsilon_{22}=1$. Furthermore, by setting $\epsilon_{11}=1$ and $\epsilon_{21}=3$, and then conducting a computer search, we can obtain the remaining 7 base blocks. Note that the choices of $\epsilon_{11}$ and $\epsilon_{21}$ are not unique and may not be successful on the first attempt. The strategy is to first try small values for $\epsilon_{11}$ and $\epsilon_{21}$. If the remaining base blocks can be found, we finalize the choice. Otherwise, we readjust the values $\epsilon_{11}$ and $\epsilon_{21}$, and if necessary, even modify the ranges of $I_1$ and $I_2$.

The verification of the correctness of these base blocks is left to the reader.
\end{proof}

We now describe a method for selecting $b_{r1}$ and $b_{r2}$. We analyze the coefficients of $t$ in \eqref{eqn:1}. These coefficients form two ordered 3-multi-subsets of $\Z_{12}$ in the proof of Propositions \ref{prop:3-PDF} and \ref{porp:3-PDF-1}:
\begin{center}
\begin{tabular}{lll}
$(b_{10},b_{11},b_{12})=(0,3,0)$; \\
$(b_{20},b_{21},b_{22})=(0,5,0)$.
\end{tabular}
\end{center}
Let us analyze the ``differences'' produced by the ordered pairs $(b_{r0},b_{r1})$, $(b_{r1},b_{r0})$, $(b_{r0},b_{r2})$, $(b_{r2},b_{r0})$, $(b_{r1},b_{r2})$ and $(b_{r2},b_{r1})$ for $r=1,2$. Informally, the  ``difference'' for an ordered pair is defined as the right-hand value minus the left-hand value, followed by an additional shift.

Let us examine the $(v,3,1)$-PDFs given in the proof of Proposition \ref{prop:3-PDF}. Observe the row labeled $3t+i$ in Table \ref{tab:3-PDF-1}, which comes from the value $(b_{11}t+i)-b_{10}t$. For the ordered pair $(b_{10},b_{11})=(0,3)$, the difference $b_{11}-b_{10}\equiv 3\pmod{12}$ corresponds to the interval $[3t+1,4t-1]$. Here, {\bf we denote an interval by the coefficient of $t$ in its starting point}. For the reversed pair $(b_{11},b_{10})=(3,0)$, we compute the difference $b_{10}-b_{11}\equiv 9\pmod{12}$. A shift of $-1$ is then applied modulo $12$ to align with our convention of denoting intervals by the coefficient of $t$ in their starting point, yielding $9-1\equiv 8\pmod{12}$. This value $8$ corresponds to the interval $[8t+2, 9t]$, which is the additive inverse modulo $12t+1$ of the interval $[3t+1, 4t-1]$.

Observe the row labeled $5t+i$ in Table \ref{tab:3-PDF-1}, which comes from the value $(b_{21}t+i)-b_{20}t$. For the ordered pair $(b_{20},b_{21})=(0,5)$, the difference $b_{21}-b_{20}\equiv 5\pmod{12}$ corresponds to the interval $[5t,6t-1]$. For the reversed pair $(b_{21},b_{20})=(5,0)$, we compute the difference $b_{20}-b_{21}\equiv 7\pmod{12}$, and then apply a shift of $-1$ to obtain $7-1\equiv 6\pmod{12}$. This value $6$ corresponds to the interval $[6t+2, 7t+1]$, which is the additive inverse modulo $12t+1$ of the interval $[5t, 6t-1]$.

Observe the row labeled $3t-i$ in Table \ref{tab:3-PDF-1}, which comes from the value $(b_{11}t+i)-(b_{12}t+2i)$. For the ordered pair $(b_{11},b_{12})=(3,0)$, the difference $b_{12}-b_{11}\equiv 9\pmod{12}$ corresponds to the interval $[9t+2,10t]$. For the reversed pair $(b_{12},b_{11})=(0,3)$, we compute the difference $b_{11}-b_{12}\equiv 3\pmod{12}$, and then apply a shift of $-1$ to obtain $3-1\equiv 2\pmod{12}$. This value $2$ corresponds to the interval $[2t+1,3t-1]$, which is the additive inverse modulo $12t+1$ of the interval $[9t+2,10t]$.

Observe the row labeled $5t-i-1$ in Table \ref{tab:3-PDF-1}, which comes from the value $(b_{21}t+i)-(b_{22}t+2i+1)$. For the ordered pair $(b_{21},b_{22})=(5,0)$, the difference $b_{22}-b_{21}\equiv 7\pmod{12}$ corresponds to the interval $[7t+2,8t+1]$. For the reversed pair $(b_{22},b_{21})=(0,5)$, we compute the difference $b_{21}-b_{22}\equiv 5\pmod{12}$, and then apply a shift of $-1$ to obtain $5-1\equiv 4\pmod{12}$. This value $4$ corresponds to the interval $[4t,5t-1]$, which is the additive inverse modulo $12t+1$ of the interval $[7t+2,8t+1]$.

\begin{table}[!b]\centering
\renewcommand{\arraystretch}{1.2}
\begin{tabular}{|c|c|c|}\hline
$\Delta^+{\cal F}_1$ & Coefficient of $t$ in the starting point & Source
\\\hline					
$[3t+1,4t-1]\setminus\{\frac{7t}{2}\} $ & $3$ & $b_{11}-b_{10}$ \\
$[2, 2t-2]_{2}$ & $0$* & $b_{12}-b_{10}$ \\
                & $1$* & $b_{12}-b_{10}+1$ \\
$[2t+1,3t-1]\setminus\{\frac{5t}{2}\}$ & $2$ & $b_{11}-b_{12}-1$\\
				\hline
$\Delta^-{\cal F}_1$ & & \\
				\hline
$ [8t+2,9t]\setminus\{\frac{17t}{2}+1\} $ & $8$ & $b_{10}-b_{11}-1$ \\
$[10t+3, 12t-1]_{2}$ & $11$* & $b_{10}-b_{12}-1$ \\
                     & $10$* & $b_{10}-b_{12}-2$ \\
$[9t+2,10t]\setminus\{\frac{19t}{2}+1\}$ & $9$ & $b_{12}-b_{11}$ \\
				\hline \hline
$\Delta^+{\cal F}_2$ & Coefficient of $t$ in the starting point & \\
				\hline
$[5t,6t-1]$ & $5$ & $b_{21}-b_{20}$ \\
$[1,2t-1]_{2}$ & $0$* & $b_{22}-b_{20}$ \\
               & $1$* & $b_{22}-b_{20}+1$ \\
$[4t,5t-1]$ & $4$ & $b_{21}-b_{22}-1$ \\\hline
$\Delta^-{\cal F}_2$ &  & \\
				\hline
$[6t+2,7t+1]$ & $6$ & $b_{20}-b_{21}-1$ \\
$[10t+2,12t]_{2}$ &$11$* & $b_{20}-b_{22}-1$ \\
&$10$* & $b_{20}-b_{22}-2$ \\
$[7t+2,8t+1]$ & $7$ & $b_{22}-b_{21}$ \\\hline
\end{tabular}
\caption{Coefficients of $t$ in the starting points of the intervals in Table \ref{tab:3-PDF-1}} \label{tab:auxtab}
\end{table}

Observe the row labeled $2i$ in Table \ref{tab:3-PDF-1}, which comes from the value $(b_{12}t+2i)-b_{10}t$. For the pair $(b_{10},b_{12})=(0,0)$, the difference $b_{12}-b_{10}\equiv 0\pmod{12}$ corresponds to the interval $[2,2t-2]_2=[2,t]_2\cup[t+2,2t-2]_2$. According to our convention for interval labeling, the two intervals $[2,t]_2$ and $[t+2,2t-2]_2$ can be represented by the differences $b_{12}-b_{10}\pmod{12}$ and $b_{12}-b_{10}+1\pmod{12}$, respectively. Note that each of the two intervals is only half-covered. For the pair $(b_{12},b_{10})=(0,0)$, we compute the difference $b_{10}-b_{12}\equiv 0\pmod{12}$, and then apply a shift of $-1$ to obtain $0-1\equiv 11\pmod{12}$ and a shift of $-2$ to obtain $0-2\equiv 10\pmod{12}$. The values $10$ and $11$ correspond to the interval $[10t+3,12t-1]_2$, which is the additive inverse modulo $12t+1$ of the interval $[2,2t-2]_2$.

Observe the row labeled $2i+1$ in Table \ref{tab:3-PDF-1}, which comes from the value $(b_{22}t+2i+1)-b_{20}t$. For the pair $(b_{20},b_{22})=(0,0)$, the difference $b_{22}-b_{20}\equiv 0\pmod{12}$ corresponds to the interval $[1,2t-1]_2=[1,t-1]_2\cup[t+1,2t-1]_2$. The two intervals $[1,t-1]_2$ and $[t+1,2t-1]_2$ can be represented by the differences $b_{22}-b_{20}\pmod{12}$ and $b_{22}-b_{20}+1\pmod{12}$, respectively. For the pair $(b_{22},b_{20})=(0,0)$, we compute the difference $b_{20}-b_{22}\equiv 0\pmod{12}$, and then apply a shift of $-1$ to obtain $0-1\equiv 11\pmod{12}$ and a shift of $-2$ to obtain $0-2\equiv 10\pmod{12}$. The values $10$ and $11$ correspond to the interval $[10t+2,12t]_2$, which is the additive inverse modulo $12t+1$ of the interval $[1,2t-1]_2$.

Now we summarize all the intervals of differences arising from the ordered pairs $(b_{r0},b_{r1})$, $(b_{r1},b_{r0})$, $(b_{r0},b_{r2})$, $(b_{r2},b_{r0})$, $(b_{r1},b_{r2})$ and $(b_{r2},b_{r1})$ for $r=1,2$ in Table \ref{tab:auxtab}, where the symbol $*$ denotes the intervals that are half-covered. We can see that the two ordered 3-multi-subsets of $\mathbb{Z}_{12}$,  $(b_{10},b_{11},b_{12})$ and $(b_{20},b_{21},b_{22})$, must satisfy that the ``differences'' generated by them, which are listed in the third column of the table, cover each of the $12$ integers from $0$ to $11$ exactly once (note that the differences marked with $*$ are counted as half). This motivates us to introduce the concept of layered difference families in the next section.

\section{Layered difference families}\label{sec:layered-DF}

Our strategy for constructing a $(v,k,\lambda)$-CDF is to first construct most of its base blocks. Each base block is of the form:
\begin{align}\label{eqn:general form}
\{b_{r,0}\cdot t+0\cdot i+\epsilon_{r,0},\ \ \ \ b_{r,1}\cdot t+1\cdot i+\epsilon_{r,1},\ \ \ \ldots, \ \ \ b_{r,k-1}\cdot t+(k-1)\cdot i+\epsilon_{r,k-1}\}, \tag{$\ddagger$}
\end{align}
where $i$ runs over some set $I_r$. Then we construct the remaining sporadic base blocks. To select appropriate parameters $b_{r,j}$, $0\leq j\leq k-1$, we introduce the concept of layered difference families.

We shall use the standard notation for group rings as given in \cite[Chapter VI.3]{bjh}. Let $\mathbb{Q}$ be the field of rational numbers and $\mathbb{Q}[\mathbb{Z}_v]$ be the group ring of the cyclic group $(\mathbb{Z}_v, +)$ over $\mathbb{Q}$. Specifically, the group ring $\mathbb{Q}[\mathbb{Z}_v]$ consists of all formal sums of the form $\sum_{g \in \mathbb{Z}_v} a_g g$, where $a_g \in \mathbb{Q}$. To distinguish between the addition operation in $\mathbb{Q}$ and that in $\mathbb{Z}_v$, we denote the addition operation in $\mathbb{Q}$ as $\oplus$. This induces the addition operation on the group ring $\mathbb{Q}[\mathbb{Z}_v]$ (i.e., the operation represented by the summation symbol $\sum$ in the formal sums) as follows
$$\left(\sum_{g\in \mathbb{Z}_v} a_g  g \right) \oplus \left(\sum_{g \in \mathbb{Z}_v} a'_g  g \right) = \sum_{g \in \mathbb{Z}_v} (a_g \oplus a'_g) g.$$


\begin{Definition}\label{def-LDF}
Let $k\ge 2$ be an integer, and let $B=(b_{0},b_{1},\ldots,b_{k-1})$ be an ordered $k$-multi-subset of $(\Z_v,+)$. Define
$$\Delta^* B:=\sum_{0\le s_1 < s_2 < k} \sum_{0\le \ell < s_2-s_1} \underline{\frac{1}{s_2-s_1}}\left[(b_{s_2} - b_{s_1} +\ell) \oplus (b_{s_1} - b_{s_2} -\ell - 1)\right]\in \mathbb{Q}[\Z_v],$$
where the underlined $\underline{\frac{1}{s_2-s_1}}$ emphasizes that the value $\frac{1}{s_2-s_1}$ is taken from $\mathbb{Q}$. Let $\B$ be a collection of ordered $k$-multi-subsets $($called \emph{base blocks}$)$ of $\Z_v$. If
$$\Delta^* \B := \sum_{B\in \B} \Delta^* B=\underline{\lambda} \sum_{g\in \Z_{v}} g,$$
then $\B$ is called a {\em layered difference family}, written as a $(v,k,\lambda)$-LDF. The values $b_{s_2} - b_{s_1} +\ell$ and $b_{s_1} - b_{s_2} -\ell - 1$ are called \emph{differences}, and $\frac{1}{s_2-s_1}$ is their \emph{coefficient}.
\end{Definition}

In Definition \ref{def-LDF}, the term $\underline{\frac{1}{s_2-s_1}}[(b_{s_2}-b_{s_1}+\ell) \oplus (b_{s_1} - b_{s_2} -\ell - 1)]$ arises from the difference intervals generated by the ordered pairs $(b_{s_1}, b_{s_2})$ and $(b_{s_2}, b_{s_1})$, which correspond to the ordered pairs  $(b_{s_1}t+s_1i+\epsilon_{s_1}, b_{s_2}t+s_2i+\epsilon_{s_2})$ and $(b_{s_2}t+s_2i+\epsilon_{s_2}, b_{s_1}t+s_1i+\epsilon_{s_1})$ in \eqref{eqn:general form}.

\begin{Lemma}\label{lem:12,3,1-LDF}
There exists a $(12,3,1)$-LDF.
\end{Lemma}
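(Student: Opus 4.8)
The plan is to exhibit an explicit $(12,3,1)$-LDF consisting of just two base blocks and then verify the defining identity by a direct finite computation over $\Z_{12}$. The natural candidates are dictated by the analysis preceding Definition \ref{def-LDF}: the two ordered triples $B_1=(0,3,0)$ and $B_2=(0,5,0)$, which are precisely the coefficient vectors $(b_{r0},b_{r1},b_{r2})$ extracted from the $(v,3,1)$-PDF of Proposition \ref{prop:3-PDF}. I would set $\B=\{B_1,B_2\}$ and show $\Delta^*\B=\underline{1}\sum_{g\in\Z_{12}}g$.

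First I would unwind $\Delta^* B$ from Definition \ref{def-LDF} for a general triple $B=(b_0,b_1,b_2)$. The index pairs $(s_1,s_2)$ with $0\le s_1<s_2<3$ are $(0,1)$, $(0,2)$, and $(1,2)$; the first and third have $s_2-s_1=1$ (so only $\ell=0$, with coefficient $\underline{1}$), while $(0,2)$ has $s_2-s_1=2$ (so $\ell\in\{0,1\}$, each with coefficient $\underline{\tfrac12}$). This yields eight difference terms per block: the four integer-weighted residues $b_1-b_0,\ b_0-b_1-1,\ b_2-b_1,\ b_1-b_2-1$, together with the four half-weighted residues $b_2-b_0,\ b_2-b_0+1,\ b_0-b_2-1,\ b_0-b_2-2$, all read modulo $12$.

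Next I would substitute the two triples. For $B_1=(0,3,0)$ the integer-weighted residues are $3,8,9,2$ and the half-weighted ones are $0,1,11,10$; for $B_2=(0,5,0)$ they are $5,6,7,4$ and again $0,1,11,10$. Collecting coefficients, $B_1$ places weight $1$ on $\{2,3,8,9\}$ and weight $\tfrac12$ on $\{0,1,10,11\}$, while $B_2$ places weight $1$ on $\{4,5,6,7\}$ and weight $\tfrac12$ on $\{0,1,10,11\}$. Adding the two contributions, every element of $\{2,\dots,9\}$ receives total weight $1$ from a single block, and each of $0,1,10,11$ receives $\tfrac12+\tfrac12=1$; hence $\Delta^*\B=\underline{1}\sum_{g\in\Z_{12}}g$, as required.

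Since the whole argument reduces to a tally over the twelve residues, there is no genuine obstacle; the only point demanding care is the bookkeeping of the half-weighted differences arising from the pair $(0,2)$. These are exactly the terms flagged by the symbol $*$ in Table \ref{tab:auxtab}, and the decisive observation is that $B_1$ and $B_2$ produce the \emph{same} half-weighted residues $\{0,1,10,11\}$, so their two halves fuse into full weight there, while the integer-weighted residues of the two blocks partition the complementary set $\{2,\dots,9\}$. Ensuring the correct modular reductions (for instance $b_0-b_1-1\equiv 8$ and $b_2-b_1\equiv 9$ for $B_1$) is the single place where a sign or off-by-one slip could occur, so I would double-check each of the sixteen residues against Table \ref{tab:auxtab} before concluding.
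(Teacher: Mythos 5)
Your proposal is correct and coincides with the paper's own proof: it uses the same two blocks $B_1=(0,3,0)$ and $B_2=(0,5,0)$, expands $\Delta^* B_r$ exactly as in Definition \ref{def-LDF}, and verifies that the weight-$1$ residues $\{2,3,8,9\}\cup\{4,5,6,7\}$ together with the doubled half-weights on $\{0,1,10,11\}$ give $\underline{1}\sum_{g\in\Z_{12}}g$. All sixteen residues you list match the paper's computation, so nothing further is needed.
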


\begin{proof}
Let $B_1=(0,3,0)$ and $B_2=(0,5,0)$ be two ordered $3$-multi-subsets of $\Z_{12}$, which have been used to construct $(v,3,1)$-PDFs in Section \ref{sec:3-PDF}. Write $B_{1}=(b_{10},b_{11},b_{12})=(0,3,0)$. Then
\begin{align*}
\Delta^* B_{1}=&\ \underline{1}[(b_{11}-b_{10}) \oplus (b_{10}-b_{11}-1)]\oplus\\
&\  \underline{1}[(b_{12}-b_{11}) \oplus (b_{11}-b_{12}-1)] \oplus \\
&\ \underline{\frac{1}{2}}[(b_{12}-b_{10}) \oplus (b_{10}-b_{12}-1) \oplus (b_{12}-b_{10}+1) \oplus (b_{10}-b_{12}-1-1)]\\
=&\ \underline{1}[3\oplus(-4)] \oplus \underline{1}[(-3)\oplus2] \oplus \underline{\frac{1}{2}}[0\oplus(-1)\oplus1\oplus(-2)]\\
=&\ \underline{1}[3\oplus8] \oplus \underline{1}[9\oplus2] \oplus \underline{\frac{1}{2}}[0\oplus11\oplus1\oplus10].
\end{align*}
Similarly,
\begin{align*}
\Delta^* B_{2}=\underline{1}[5 \oplus 6] \oplus \underline{1}[7 \oplus 4] \oplus \underline{\frac{1}{2}}[0 \oplus 11 \oplus 1 \oplus 10].
\end{align*}
Since $\Delta^* B_{1} \oplus \Delta^* B_{2} = \sum_{g\in \Z_{12}} g$, it follows that $B_{1} $ and $ B_{2}$ form a $(12,3,1)$-LDF.
\end{proof}

\begin{Lemma}\label{lem:72-LDF}
There exists a $(72,4,1)$-LDF.
\end{Lemma}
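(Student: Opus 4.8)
The plan is to prove this exactly as Lemma~\ref{lem:12,3,1-LDF} was proved: exhibit an explicit family of ordered $4$-multi-subsets of $\Z_{72}$ and verify the defining identity $\Delta^*\B=\sum_{g\in\Z_{72}}g$ directly. First I would fix the number of base blocks. With $k=4$ there are $\binom{4}{2}=6$ ordered pairs $(s_1,s_2)$, and the pair of gap $d=s_2-s_1$ contributes $d$ differences on each side, every one carrying coefficient $\tfrac1d$, so that each such pair accounts for mass $2d\cdot\tfrac1d=2$. Hence every block contributes mass $2\binom{4}{2}=12$ to $\Delta^*B$, and since $\sum_{g\in\Z_{72}}g$ has mass $72$, a $(72,4,1)$-LDF must consist of exactly $72/12=6$ blocks.

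Next I would record the shape of $\Delta^*B$ for a block $B=(b_0,b_1,b_2,b_3)$, grouped by gap: the three gap-$1$ pairs $(0,1),(1,2),(2,3)$ each produce $b_{s_2}-b_{s_1}$ and $b_{s_1}-b_{s_2}-1$ with coefficient $1$; the two gap-$2$ pairs $(0,2),(1,3)$ each produce four terms with coefficient $\tfrac12$; and the gap-$3$ pair $(0,3)$ produces six terms with coefficient $\tfrac13$. The structural observation that organises the whole computation is that every negative difference is $-1$ minus its positive partner, namely $b_{s_1}-b_{s_2}-\ell-1=-(b_{s_2}-b_{s_1}+\ell)-1\equiv 71-(b_{s_2}-b_{s_1}+\ell)\pmod{72}$. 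Thus if $c_g$ denotes the total coefficient placed on $g$ by the positive differences alone, the coefficient of $g$ in $\Delta^*\B$ equals $c_g+c_{71-g}$, and the LDF condition collapses to the reflection-balanced requirement
\[
c_g+c_{71-g}=1\qquad\text{for every }g\in\Z_{72};
\]
equivalently, each of the $36$ reflection pairs $\{g,71-g\}$ must receive total positive coefficient exactly $1$.

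This reduces the construction to choosing six $4$-tuples whose $60$ positive differences---the $18$ with coefficient $1$, the $24$ with coefficient $\tfrac12$ (which must combine in pairs) and the $18$ with coefficient $\tfrac13$ (which must combine in triples)---distribute over $\Z_{72}$ so that $c_g+c_{71-g}=1$ everywhere. I would then display the chosen blocks together with a table of their positive differences grouped by coefficient (in the style of Table~\ref{tab:auxtab}), from which the identity $\sum_{r=1}^{6}\Delta^*B_r=\sum_{g\in\Z_{72}}g$ can be read off by inspection.

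The real difficulty lies entirely in locating the six tuples: arranging the integer, halved and thirded differences so that every reflection pair balances to $1$ is a constraint-satisfaction problem with no evident closed form. I expect to find a solution by a short computer search---normalising $b_0=0$ in each block and invoking the reflection symmetry to match only the positive differences keeps the search space small---much as the shift parameters $\epsilon_{r1},\epsilon_{r2}$ were pinned down in Proposition~\ref{porp:3-PDF-1}. Once a valid family is fixed, the remaining verification is purely mechanical and carries no conceptual content.
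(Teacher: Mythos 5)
Your structural setup is sound and follows the same route as the paper: the count that a $(72,4,1)$-LDF must have exactly $72/12=6$ base blocks agrees with Proposition~\ref{LDF-nec-1}, and your reflection observation --- that every negative difference $b_{s_1}-b_{s_2}-\ell-1$ equals $71-(b_{s_2}-b_{s_1}+\ell)$ in $\Z_{72}$, so the LDF condition collapses to $c_g+c_{71-g}=1$ for all $g$ --- is correct and is in fact a clean way to organise the verification. The genuine gap is that you never produce the six tuples. The lemma is an existence statement, and in the paper its entire mathematical content is the explicit witness; your argument only reduces existence to a finite constraint-satisfaction problem and then asserts that a short computer search ``is expected'' to solve it. That expectation is not a proof: nothing in your reduction shows the search space is nonempty, so the conclusion is never reached.

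This is not a pedantic complaint, because searches of exactly this kind do fail: the paper itself records that a computer search shows no $(72,4,1)$-PLDF exists (the perfect variant of the very same object, same order, same block size), which is precisely why the later constructions move to order $108$. So one cannot infer from the shape of the constraints that a solution over $\Z_{72}$ exists. The paper closes this gap by exhibiting $B_{1}=(0,43,31,8)$, $B_{2}=(0,23,5,8)$, $B_{3}=(0,41,25,8)$, $B_{4}=(0,35,5,0)$, $B_{5}=(0,47,19,0)$, $B_{6}=(0,21,13,0)$ and verifying $\sum_{j=1}^{6}\Delta^{*}B_{j}=\sum_{g\in\Z_{72}}g$ (the computation is carried out in full for $B_1$, exactly in the gap-by-gap form you describe). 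To complete your proof you must supply such a family and run your reflection-pair check on it; everything before that point is a proof strategy rather than a proof.
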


\begin{proof}
Take the following ordered 4-multi-subsets of $\Z_{72}$:
\begin{center}
\begin{tabular}{llll}
$B_{1}=(0,43,31,8)$, &\ \ \ \ \ \  $B_{2}=(0,23,5,8)$, &\ \ \ \ \ \  $B_{3}=(0,41,25,8)$, \\
$B_{4}=(0,35,5,0)$, &\ \ \ \ \ \  $B_{5}=(0,47,19,0)$, &\ \ \ \ \ \  $B_{6}=(0,21,13,0)$.
\end{tabular}
\end{center}
Write $B_1=(b_{10},b_{11},b_{12},b_{13})$. Then

\begin{align*}
\Delta^* B_{1} = &\ \underline{1}\ [(b_{11}-b_{10}) \oplus (b_{10}-b_{11}-1)]\oplus \\
& \ \underline{1}\ [(b_{12}-b_{11}) \oplus (b_{11}-b_{12}-1)] \oplus \\
& \ \underline{1}\ [(b_{13}-b_{12}) \oplus (b_{12}-b_{13}-1)] \oplus \\
& \ \underline{\frac{1}{2}}\ [(b_{12}-b_{10}) \oplus (b_{10}-b_{12}-1) \oplus (b_{12}-b_{10}+1) \oplus (b_{10}-b_{12}-1-1)] \oplus \\
&\ \underline{\frac{1}{2}}\ [(b_{13}-b_{11}) \oplus (b_{11}-b_{13}-1) \oplus (b_{13}-b_{11}+1) \oplus (b_{11}-b_{13}-1-1)] \oplus \\
&\ \underline{\frac{1}{3}}\ [(b_{13}-b_{10}) \oplus (b_{10}-b_{13}-1) \oplus (b_{13}-b_{10}+1) \oplus \\ & \ \ \ \ \ \ \ \ \ \ \ \ \ \ \ \ \ \ \ \ (b_{10}-b_{13}-1-1) \oplus (b_{13}-b_{10}+2) \oplus (b_{10}-b_{13}-2-1)] \\
=&\ \underline{1}\ [43 \oplus (-44)] \oplus \underline{1}\ [(-12) \oplus 11] \oplus \underline{1}\ [(-23) \oplus 22] \oplus \\
&\ \underline{\frac{1}{2}}\ [31 \oplus (-32) \oplus 32 \oplus (-33)] \oplus \underline{\frac{1}{2}}\ [ (-35) \oplus 34 \oplus (-34) \oplus 33] \oplus \\
&\ \underline{\frac{1}{3}}\ [8 \oplus (-9) \oplus 9 \oplus (-10) \oplus 10 \oplus (-11)] \\
=&\ \underline{1}\ [43 \oplus 28] \oplus \underline{1}\ [60 \oplus 11] \oplus \underline{1}\ [49 \oplus 22] \oplus \\
&\ \underline{\frac{1}{2}}\ [31 \oplus 40 \oplus 32 \oplus 39] \oplus \underline{\frac{1}{2}}\ [37 \oplus 34 \oplus 38 \oplus 33] \oplus \\
&\ \underline{\frac{1}{3}}\ [8 \oplus 63 \oplus 9 \oplus 62 \oplus 10 \oplus 61].
\end{align*}
Similarly, one can compute $\Delta^* B_{j}$ for $2\leq j\leq 6$ and verify that $\sum_{j=1}^{6} \Delta^* B_{j}=\sum\limits_{g\in \Z_{72}} g$. Thus $B_{1},B_{2},\dots,B_{6}$ form a $(72,4,1)$-LDF.
\end{proof}

\begin{Remark}\label{remark-1}
The $(72,4,1)$-LDF given in the proof of Lemma $\ref{lem:72-LDF}$ was implicitly used in \textnormal{\cite{zfw2022}} to establish the existence of $(v,4,1)$-CDFs. In that paper, the constructed $(v,4,1)$-CDFs have base blocks that almost all follow the form of \eqref{eqn:general form}, where the coefficients $b_{r,j}$ of $t$ constitute a $(72,4,1)$-LDF. Previously, the theoretical rationale for selecting these coefficients was unclear, as they were obtained solely through computer search. We now recognize that they correspond to the LDF defined herein.
\end{Remark}

\begin{Remark}\label{remark-2}
The $(72,4,1)$-LDF in Lemma $\ref{lem:72-LDF}$ has also been implicitly used in \textnormal{\cite{zzcfwz2024}} to establish the existence of cyclic relative difference families with block size four, in \textnormal{\cite{zcf}} to demonstrate the existence of optimal optical orthogonal codes with weight four achieving the Johnson bound, and in \textnormal{\cite{zcf2025}} to establish the existence of cyclic balanced sampling plans avoiding adjacent units with maximum distance five and block size four.
\end{Remark}
	
\begin{Proposition}\label{LDF-nec-1}
A $(v,k,\lambda)$-LDF consists of $\lambda v/(k(k-1))$ base blocks.
\end{Proposition}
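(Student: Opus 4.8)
The plan is to count the total coefficient mass on both sides of the defining identity $\Delta^*\B = \underline{\lambda}\sum_{g\in\Z_v} g$ by applying the augmentation map, i.e. the $\mathbb{Q}$-linear functional $\varphi:\mathbb{Q}[\Z_v]\to\mathbb{Q}$ that sends every group element $g$ to $1$ (equivalently, $\varphi$ reads off the sum of all rational coefficients of a formal sum). Since $\varphi$ is additive with respect to $\oplus$, it suffices to evaluate $\varphi(\Delta^* B)$ for a single base block $B=(b_0,\ldots,b_{k-1})$ and then multiply by the number of blocks.

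First I would compute $\varphi(\Delta^* B)$. Fix a pair $s_1<s_2$ and set $d=s_2-s_1$. In the inner sum the index $\ell$ ranges over the $d$ values $0,1,\ldots,d-1$, and each summand $\underline{\frac{1}{d}}\bigl[(b_{s_2}-b_{s_1}+\ell)\oplus(b_{s_1}-b_{s_2}-\ell-1)\bigr]$ contributes two group elements, each carrying the rational coefficient $\tfrac1d$. Hence this pair contributes $d\cdot\tfrac{2}{d}=2$ to $\varphi(\Delta^* B)$, independently of the actual entries $b_{s_1},b_{s_2}$ and of whether any group elements coincide (the functional $\varphi$ only sees total mass, so collisions are immaterial). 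Summing over all $\binom{k}{2}$ pairs with $0\le s_1<s_2<k$ gives $\varphi(\Delta^* B)=2\binom{k}{2}=k(k-1)$.

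Next I would apply $\varphi$ to the whole family. By additivity, if $|\B|=b$ then $\varphi(\Delta^*\B)=\sum_{B\in\B}\varphi(\Delta^* B)=b\,k(k-1)$. On the other hand $\varphi\bigl(\underline{\lambda}\sum_{g\in\Z_v} g\bigr)=\lambda v$, since there are exactly $v$ group elements, each with coefficient $\lambda$. Equating the two evaluations yields $b\,k(k-1)=\lambda v$, whence $b=\lambda v/(k(k-1))$, as claimed.

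There is essentially no deep obstacle here; the only point requiring care is the bookkeeping in the inner double sum, namely verifying that the coefficient $\tfrac{1}{s_2-s_1}$ exactly cancels the count of admissible $\ell$-values so that every pair contributes the clean mass $2$, uniformly in $k$ and in the block entries. I would also emphasize that this is purely a mass count: it uses neither the distinctness of the differences nor any structural feature of an LDF beyond the defining equality, which is exactly why the cardinality formula holds verbatim for every $(v,k,\lambda)$-LDF.
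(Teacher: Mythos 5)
Your proof is correct and is essentially identical to the paper's: both count the total coefficient mass of $\Delta^*\B$, observing that each pair $s_1<s_2$ contributes $(s_2-s_1)\cdot\frac{2}{s_2-s_1}=2$ so that each block contributes $k(k-1)$, and then equate this with the mass $\lambda v$ of $\underline{\lambda}\sum_{g\in\Z_v}g$. Your framing via the augmentation map $\varphi$ merely makes explicit the linear functional that the paper's coefficient-sum computation implicitly uses.
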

	
\begin{proof}
The sum of the coefficients of all differences generated from each base block of a $(v,k,\lambda)$-LDF is
$$\sum_{0\le s_1< s_2 < k} \sum_{0\le \ell< s_2-s_1} \left(\frac{1}{s_2-s_1}\cdot 2\right)=\sum_{0\le s_1< s_2 < k}  2 =k(k-1).$$
Assume that a $(v,k,\lambda)$-LDF has $n$ base blocks. Then according to the definition of layered difference families, we have $nk(k-1)=\lambda v$.
\end{proof}

The $(72,4,1)$-LDF given in the proof of Lemma \ref{lem:72-LDF} cannot be used to construct $(v,4,1)$-PDFs. For instance, the ordered pair $(0,43)$ in the base block $B_1$ has $43$ as the positive difference, which exceeds $72/2 = 36$. This limitation motivates us to define perfect layered difference families.

\begin{Definition}
Let $k \geq 2$ be an integer and $v$ be an even positive integer. Let $B = (b_0, \ldots, b_{k-1})$ be an ordered $k$-multi-subset of $(\mathbb{Z}_v, +)$. For an ordered pair $(b_{s_1}, b_{s_2})$ with $0\leq s_1<s_2<k$, define
$$
\Delta^{*+} (b_{s_1}, b_{s_2}) :=
	\begin{cases}
		\sum\limits_{0 \leq \ell < s_2-s_1} \underline{\frac{1}{s_2-s_1}}(b_{s_2} - b_{s_1} + \ell), & \text{if } b_{s_1} \ (\mathrm{mod}\ v) \leq b_{s_2} \ (\mathrm{mod}\ v); \\
		\sum\limits_{0 \leq \ell < s_2-s_1} \underline{\frac{1}{s_2-s_1}}(b_{s_1} - b_{s_2} - \ell - 1), & \text{if } b_{s_1} \ (\mathrm{mod}\ v) > b_{s_2} \ (\mathrm{mod}\ v).
	\end{cases}
	$$
Let
$$
\Delta^{*+}B := \sum_{0 \leq s_1 < s_2 < k} \Delta^{*+} (b_{s_1}, b_{s_2}).
$$
Let $\B$ be a collection of ordered $k$-multi-subsets $($called \emph{base blocks}$)$ of $\Z_v$. If
$$
\Delta^{*+}\mathcal{B}  := \sum_{B \in \mathcal{B}} \Delta^{*+}B = \underline{\lambda} \sum_{g = 0}^{v/2 - 1} g\in \mathbb{Q}[\Z_v],
$$
then $\mathcal{B}$ is called a \emph{perfect $(v,k,\lambda)$-layered difference family}, written as a $(v,k,\lambda)$-PLDF.
\end{Definition}

Note that if $\B$ is a $(v,k,\lambda)$-LDF with odd $v$, then the difference $(v-1)/2$ corresponds to the interval $[(v-1)t/2, (v+1)t/2]$ in its associated cyclic difference packing  (recall that we denote an interval by the coefficient of $t$ in its starting point; see Table \ref{tab:auxtab}). Such an interval cannot be properly accommodated in a perfect difference family. Consequently, when we define a perfect layered difference family, we require that $v$ is even.

Computer search shows that no $(72,4,1)$-PLDF exists. Therefore, in order to construct $(v,4,\lambda)$-PDFs or $(m,4,3)$-PSDSs in next sections, we construct a $(108,4,1)$-PLDF as follows.


\begin{Lemma}\label{lem:108-LDF}
There exists a $(108,4,1)$-PLDF.
\end{Lemma}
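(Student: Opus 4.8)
The plan is to exhibit an explicit collection of nine ordered $4$-multi-subsets of $\Z_{108}$ and to verify directly that their combined contribution $\Delta^{*+}\B$ equals the target $\sum_{g=0}^{53} g$, precisely in the spirit of the explicit verifications carried out in Lemmas \ref{lem:12,3,1-LDF} and \ref{lem:72-LDF}. The block count is forced: arguing as in Proposition \ref{LDF-nec-1} but noting that $\Delta^{*+}$ selects only \emph{one} of the two branches for each ordered pair, each base block of a $(v,4,\lambda)$-PLDF contributes total coefficient $\binom{4}{2}=6$ to $\Delta^{*+}\B$ (each of the six pairs $(b_{s_1},b_{s_2})$ with $s_1<s_2$ contributes $s_2-s_1$ terms, each of coefficient $1/(s_2-s_1)$), whereas $\underline{\lambda}\sum_{g=0}^{v/2-1}g$ has total coefficient $\lambda v/2$. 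Hence a $(108,4,1)$-PLDF must consist of exactly $108/12=9$ base blocks.

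Before computing, I would make the perfect constraint explicit. For a single pair $(b_{s_1},b_{s_2})$ the definition of $\Delta^{*+}$ selects, for each shift $\ell$, exactly one of the two $\Z_{108}$-values $b_{s_2}-b_{s_1}+\ell$ and $b_{s_1}-b_{s_2}-\ell-1$ — the one corresponding to a \emph{positive} difference of the associated difference packing — with the choice governed by whether $b_{s_1}\leq b_{s_2}$ as residues modulo $108$. The task therefore splits into two bookkeeping requirements: every selected value must land in $\{0,1,\ldots,53\}$, and, summed over all nine blocks, each element of $\{0,1,\ldots,53\}$ must receive total coefficient exactly $1$ (equivalently, no element of $\{54,\ldots,107\}$ is ever selected). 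Note that the branch rule alone does not force a selected value into the lower half; that is enforced only by the defining equation, since its right-hand side is supported on $\{0,\ldots,53\}$.

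With the blocks in hand, the verification proceeds block by block exactly as in Lemma \ref{lem:72-LDF}: the three consecutive pairs $(b_0,b_1),(b_1,b_2),(b_2,b_3)$ each contribute a single difference with coefficient $1$; the two pairs $(b_0,b_2),(b_1,b_3)$ each contribute two differences with coefficient $\tfrac12$; and $(b_0,b_3)$ contributes three differences with coefficient $\tfrac13$. One then sums the nine resulting elements of $\mathbb{Q}[\Z_{108}]$ and checks that the coefficient of each $g$ equals $1$ for $0\leq g\leq 53$ and $0$ otherwise. The only genuinely delicate point here is the arithmetic of the fractional coefficients: the half- and third-weighted differences must interlock across the nine blocks so that, for instance, three distinct blocks each deposit weight $\tfrac13$ on a common residue, or two blocks each deposit $\tfrac12$, yielding an integer coefficient of $1$ at every residue in $\{0,\ldots,53\}$.

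The main obstacle is not the verification but the production of the nine base blocks. Since a computer search shows that no $(72,4,1)$-PLDF exists, existence at $v=108$ is not automatic and the blocks must be located by search. The perfect requirement that each selected difference lie in the lower half $\{0,\ldots,53\}$ couples nontrivially to the residue ordering of the $b_j$, and the further demand that the fractional weights aggregate to integers across blocks sharply constrains the search space; it is the interaction of these two constraints that makes $108$, rather than $72$, the smallest feasible modulus of the required form. I would accordingly obtain the nine blocks by a targeted computer search, record them together with the per-block computation of $\Delta^{*+}$ as in Lemma \ref{lem:72-LDF}, and leave the final summation as a routine check.
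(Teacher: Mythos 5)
Your proposal correctly identifies the approach the paper itself takes (explicit witness plus routine verification), and your preliminary analysis is sound: the block count of $9$ is derived correctly (each block contributes total coefficient $\binom{4}{2}=6$ to $\Delta^{*+}\mathcal{B}$, while the target $\underline{1}\sum_{g=0}^{53}g$ has total coefficient $54$), and your observation that membership of the selected differences in $\{0,\ldots,53\}$ is enforced by the defining equation rather than by the branch rule alone is accurate and worth making explicit. However, there is a genuine gap: you never produce the nine base blocks. The lemma is a pure existence statement whose entire mathematical content is the explicit witness; a statement that one \emph{would} obtain the blocks ``by a targeted computer search'' is a research plan, not a proof, and nothing in your setup guarantees the search succeeds. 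Indeed, as you note yourself, the analogous search at $v=72$ fails, so feasibility at $v=108$ cannot be taken on faith --- it must be certified by exhibiting the data.

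For comparison, the paper's proof consists exactly of that data: it lists
$B_{1}=(0,18,42,0)$, $B_{2}=(0,50,30,0)$, $B_{3}=(0,53,45,0)$, $B_{4}=(0,34,25,20)$, $B_{5}=(0,38,48,20)$, $B_{6}=(0,47,32,20)$, $B_{7}=(0,3,42,35)$, $B_{8}=(0,5,45,35)$, $B_{9}=(0,23,51,35)$,
and then checks $\sum_{j=1}^{9}\Delta^{*+}B_{j}=\sum_{g=0}^{53}g$ along the lines you describe. Your verification scheme, applied to these blocks, would complete the argument; without them (or some other existence mechanism, which you do not supply), the proposal does not prove the lemma. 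A minor additional caution: your closing claim that the interaction of the two constraints makes $108$ ``the smallest feasible modulus of the required form'' is an overstatement not supported by your argument or by the paper, which only reports nonexistence at $72$ and existence at $108$.
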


\begin{proof}
Take the following ordered 4-multi-subsets of $\Z_{108}$:
\begin{center}
\begin{tabular}{llll}
$B_{1}=(0,18,42,0)$, & $B_{2}=(0,50,30,0)$, & $B_{3}=(0,53,45,0)$, \\
$B_{4}=(0,34,25,20)$, & $B_{5}=(0,38,48,20)$, & $B_{6}=(0,47,32,20)$, \\
$B_{7}=(0,3,42,35)$, & $B_{8}=(0,5,45,35)$, & $B_{9}=(0,23,51,35)$.
\end{tabular}
\end{center}
One can check that $\sum_{j=1}^{9} \Delta^{*+} B_{j}=\sum_{g=0}^{53} g$, and so $B_{1},B_{2},\dots,B_{9}$ form a $(108,4,1)$-PLDF.
\end{proof}

\section{Existence of $(m,4,3)$-PSDSs}\label{sec:main-construction}

In this section, we utilize the $(108,4,1)$-PLDF from Lemma \ref{lem:108-LDF} to explicitly construct $(m,4,3)$-PSDSs for all $m \geq 5$, thereby proving Theorem \ref{thm:(m,4,3)-PSDS}.

\begin{Lemma}\label{lem:small-PSDS}
There exists an $(m,4,3)$-PSDS for any integer $5\leq m\leq 16$.
\end{Lemma}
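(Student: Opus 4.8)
The plan is to prove this purely by explicit construction, since $5\leq m\leq 16$ is a fixed finite range and (by Proposition~\ref{thm:PSDS-nece}(2)) $m\geq 5$ is exactly the necessary bound for $c=3$, so these twelve values are the genuine small cases that the general method of this section does not reach. For each $m$, an $(m,4,3)$-PSDS is a collection of $m$ quadruples of nonnegative integers whose positive differences cover $\{3,4,\ldots,6m+2\}$ exactly once; note that the number of available positive differences is $m\binom{4}{2}=6m$, which matches $|\{3,\ldots,6m+2\}|=6m$ precisely, so the requirement is that these $6m$ differences be pairwise distinct and equal to this interval. First I would record this counting identity, which reduces the lemma to exhibiting, for each of the twelve values of $m$, one admissible list of quadruples.

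The construction itself I would carry out by a targeted computer search, guided by the structural constraints that any valid system must satisfy: the threshold $c=3$ forces every block to have all six of its internal differences at least $3$ (equivalently, no two entries of a block may differ by $1$ or $2$), and the largest required difference $6m+2$ must be realized, so exactly one block has span $6m+2$ between its extreme entries. These constraints prune the search drastically, after which one records a successful solution for each $m$. I would then display the resulting base blocks in a single table indexed by $m=5,6,\ldots,16$, normalizing each block to contain $0$ as its smallest element. The verification that the multiset of positive differences equals $\{3,\ldots,6m+2\}$ is a routine finite check for each $m$ and can be left to the reader or confirmed by the same program.

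The main obstacle is not conceptual but practical: unlike the uniform families produced from the $(108,4,1)$-PLDF of Lemma~\ref{lem:108-LDF}, these small systems are sporadic and admit no single closed-form pattern, so each value of $m$ must be settled individually. A secondary point worth checking is that the search is genuinely feasible in this range: because the blocks live in $\{0,1,\ldots,6m+2\}$ and must be mutually ``difference-disjoint'' with minimum internal gap $3$, the space is small enough for exhaustive or randomized backtracking to succeed quickly for every $m\leq 16$. Once these base cases are in hand, together with the general construction they will yield Theorem~\ref{thm:(m,4,3)-PSDS} for all $m\geq 5$.
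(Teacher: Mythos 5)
Your proposal is correct and matches the paper's approach: the paper proves this lemma by exhibiting, for each $m$ with $5\leq m\leq 16$, an explicit list of $m$ base blocks (normalized to contain $0$) found by search and collected in an appendix, with verification left as a routine finite check. Your counting identity and the structural pruning constraints (all internal differences at least $3$, exactly one block of span $6m+2$) are sound and simply make explicit what the paper leaves implicit.
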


\begin{proof}
We provide a direct construction for an $(m,4,3)$-PSDS with $5\leq m\leq 16$ in Appendix \ref{Appendix:small-PSDS}.
\end{proof}

\begin{Lemma}\label{lem:m=8(mod9)}
There exists an $(m,4,3)$-PSDS for any positive integer $m \equiv 8 \pmod{9}$.
\end{Lemma}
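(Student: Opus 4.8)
The plan is to instantiate the $(108,4,1)$-PLDF of Lemma~\ref{lem:108-LDF} as the ``coefficient-of-$t$'' template of a PSDS, in direct analogy with the way the $(12,3,1)$-LDF of Lemma~\ref{lem:12,3,1-LDF} was instantiated to build $(v,3,1)$-PDFs in Section~\ref{sec:3-PDF}. The $(108,4,1)$-PLDF of Lemma~\ref{lem:108-LDF} consists of $108/(4\cdot3)=9$ base blocks $B_1,\dots,B_9$, which is what dictates the residue $\pmod 9$. Accordingly I write $m=9t+8$ and reserve $8$ sporadic base blocks, so that the remaining $9t$ base blocks split into $9$ families $\F_1,\dots,\F_9$, one per $B_r=(b_{r,0},b_{r,1},b_{r,2},b_{r,3})$. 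Each family is built from blocks of the form \eqref{eqn:general form} with $k=4$,
$$\{b_{r,0}t+\epsilon_{r,0},\ b_{r,1}t+i+\epsilon_{r,1},\ b_{r,2}t+2i+\epsilon_{r,2},\ b_{r,3}t+3i+\epsilon_{r,3}\},$$
with $i$ running over an integer interval $I_r$ of length $t$. The case $m=8$ (that is $t=0$) is supplied by Lemma~\ref{lem:small-PSDS}, so I may assume $t\geq 1$.

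Next I would read off the positive differences produced by the families. For a pair of coordinates $s_1<s_2$ the generic positive difference is $(b_{r,s_2}-b_{r,s_1})t+(s_2-s_1)i+(\epsilon_{r,s_2}-\epsilon_{r,s_1})$, an arithmetic progression in $i$ of common difference $s_2-s_1\in\{1,2,3\}$. The defining identity $\Delta^{*+}\B=\sum_{g=0}^{53} g$ of the PLDF guarantees that, after dividing out the scale $t$, these progressions occupy the $54$ bands indexed by the coefficient of $t$ (the values $0,1,\dots,53$) with total weight $1$ each: a band fed by a step-$1$ pair is covered outright, a band fed by step-$2$ pairs receives one parity class from each of two such pairs, and a band fed by step-$3$ pairs receives one residue class modulo $3$ from each of three such pairs. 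Thus the macroscopic decomposition of $\{3,\dots,6m+2\}=\{3,\dots,54t+50\}$ into consecutive length-$t$ blocks is automatic, up to a bounded discrepancy at the two ends.

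The remaining, and principal, task is the microscopic fit. I would choose the shifts $\epsilon_{r,j}$ and the endpoints of each $I_r$ so that inside every band the step-$2$ progressions occupy complementary residues modulo $2$ and the step-$3$ progressions occupy complementary residues modulo $3$, turning each band into an exact sub-interval with neither gaps nor repetitions. This is the analogue of the parity bookkeeping in the proof of Proposition~\ref{porp:3-PDF-1}, where $\epsilon_{12}$ and $\epsilon_{22}$ were forced to differ in parity, but it is genuinely more delicate here because of the step-$3$ pairs, which impose an additional alignment condition modulo $3$. Since the interlocking depends on the residue of $t$ modulo $6$, I expect to split into a few subcases according to $t\bmod 6$ (or at least $t\bmod 2$) and to fix the shifts in each subcase by a short computer search, as was done in Proposition~\ref{porp:3-PDF-1}. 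Finally the $8$ sporadic base blocks are used to patch the two ends: to suppress the forbidden differences $1,2$ required by the threshold $c=3$ at the bottom, and to top the interval up to exactly $54t+50$ at the top.

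The step I expect to be the main obstacle is precisely this microscopic fit together with the boundary patching. The PLDF makes the block count and the band structure automatic, so all the real work lies in verifying that the shifted progressions tile $\{3,\dots,6m+2\}$ with every value covered exactly once and that the finitely many sporadic blocks absorb the end discrepancies for every admissible residue class of $t$. In particular, the step-$3$ pairs are absent from the $k=3$ construction and introduce a new modulo-$3$ alignment that I would expect to resolve through a handful of explicit residue subcases rather than a single uniform formula.
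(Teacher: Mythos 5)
Your strategy is in outline exactly the paper's: instantiate the $(108,4,1)$-PLDF of Lemma~\ref{lem:108-LDF} as the coefficient-of-$t$ template in \eqref{eqn:general form}, fix the shifts $\epsilon_{r,j}$ by computer search, and finish with sporadic blocks; your counting is also right ($9t+8$ blocks give $54t+48$ differences for the target set $[3,54t+50]$). The genuine gap lies in the step you flag as the main obstacle, and it is more severe than you allow. The PLDF guarantees only the \emph{weight} accounting: each coefficient band receives total weight $1$. It does not make the positional fit ``automatic up to a bounded discrepancy at the two ends.'' The positions of the $54$ bands are controlled by offsets of the form $\epsilon_{r,s_2}-\epsilon_{r,s_1}$, i.e.\ by just $27$ free parameters, whereas exact contiguity of the merged intervals ($27$ step-$1$ intervals, $9$ step-$2$ regions, $3$ step-$3$ regions, hence $38$ boundary equations) together with exact parity and mod-$3$ interleaving inside those regions (another $9+6$ equations) is a system of about $53$ integer equations in $27$ unknowns. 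This is heavily over-constrained, so misalignments generically occur at band boundaries \emph{throughout} $[3,54t+50]$, not only at the ends; and your budget of $8$ sporadic blocks ($48$ differences), which you moreover confine to two short end intervals, cannot absorb interior gaps — note also that the six differences of a block $\{0,x,y,z\}$ satisfy additive relations ($z=x+(y-x)+(z-y)$, etc.), so they cannot be sprinkled freely even within the ends.

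This is precisely why the paper does not run the construction at your parameters. It writes $m=9t-1$, shortens every family to $t-2$ blocks ($i\in[1,t-2]$), and reserves $17$ sporadic blocks, i.e.\ $102$ slack positions; in the realized solution the sporadic differences do fill interior gaps all across the range — for instance $19t-3$, $19t-2$, $19t-1$ sit in the gap between the band $[18t-1,19t-4]$ of $\F_1$ and the band $[19t,20t-3]$ of $\F_2$ (Tables~\ref{tab:diff-PSDS} and \ref{tab:diff-base-blocks-remaining}). Two further points. First, the subcase split on $t\bmod 6$ you anticipate is avoidable and avoided: the PLDF is designed so that progressions which must interleave share the same $t$-coefficient (e.g.\ $B_4,B_5,B_6$ all have last coordinate $20$, $B_1,B_7$ both have $42$ in the step-$2$ position), so the parity/mod-$3$ alignment is independent of $t$, and one uniform choice of shifts and sporadic blocks, given as linear polynomials in $t$, works for all $t\geq 2$. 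Second, even granting your plan, nothing is proved until the search is actually executed and the blocks exhibited and verified; the entire content of the lemma is that explicit, checkable construction, so as written your argument is a (quantitatively over-optimistic) plan rather than a proof.
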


\begin{proof}
For $m=8$, an $(m,4,3)$-PSDS exists by Lemma \ref{lem:small-PSDS}. For $m\geq 17$, let $m=9t-1$, where $t\geq 2$, the first $9t-18$ base blocks are listed below:
	\begin{center}
		\begin{tabular}{llll}
			$F_{1,i}:=\{0$, & $18t+i-2$, & $42t+2i-5$, & $3i\}$, \\
			$F_{2,i}:=\{0$, & $50t+i-5$, & $30t+2i-3$, & $3i+1\}$, \\
			$F_{3,i}:=\{0$, & $53t+i-5$, & $45t+2i-3$, & $3i+2\}$, \\
			$F_{4,i}:=\{0$, & $34t+i-4$, & $25t+2i-2$, & $20t+3i-2\}$, \\
			$F_{5,i}:=\{0$, & $38t+i-4$, & $48t+2i-5$, & $20t+3i-1\}$, \\
			$F_{6,i}:=\{0$, & $47t+i-6$, & $32t+2i-4$, & $20t+3i-3\}$, \\
			$F_{7,i}:=\{0$, & $3t+i$, & $42t+2i-4$, & $35t+3i-3\}$, \\
			$F_{8,i}:=\{0$, & $5t+i$, & $45t+2i-4$, & $35t+3i-2\}$, \\
			$F_{9,i}:=\{0$, & $23t+i-3$, & $51t+2i-6$, & $35t+3i-4\}$, \\
\end{tabular}
\end{center}
where $1\leq i\leq t-2$. The remaining 17 base blocks are listed below:
\begin{center}
\begin{tabular}{lll}
			$\{0, 4t-1, 15t-1, 38t-7\}$,
			&$\{0, 14t-3, 30t-3, 53t-8\}$,
			&$\{0, 3t-3, 38t-5, 48t-7\}$,
			\\
			$\{0, 4t+1, 45t-3, 54t-4\}$,
			&$\{0, 3t-2, 19t-3, 44t-7\}$,
			&$\{0, 5t, 25t-1, 53t-5\}$,
			\\
			$\{0, 4t, 42t-4, 48t-4\}$,
			&$\{0, 11t-1, 30t-2, 53t-6\}$,
			&$\{0, 3t, 35t-3, 54t-5\}$,
			\\
			$\{0, 14t-2, 29t-4, 54t-6\}$,
			&$\{0, 11t-2, 38t-6, 50t-6\}$,
			&$\{0, 8t-2, 35t-5, 53t-7\}$,
			\\
			$\{0, 3t-1, 32t-4, 50t-7\}$,
			&$\{0, 12t-1, 20t-2, 44t-6\}$,
			&$\{0, 14t-1, 23t-3, 48t-6\}$,
			\\
			$\{0, 7t-1, 35t-4, 51t-6\}$,
			&$\{0, 6t-1, 30t-4, 40t-5\}$.
			&
\end{tabular}
\end{center}

\begin{table}[!b]
\resizebox{\linewidth}{!}{
\begin{tabular}{|c|c||c|c||c|c|}\hline
		$\Delta F^+_{1,i}$ & $\Delta{\cal F}^+_1$ &  $\Delta F^+_{2,i}$ & $\Delta{\cal F}^+_2$ & $\Delta F^+_{3,i}$ & $\Delta{\cal F}^+_3$  \\
		\hline
		$18t+i-2$ & $[18t-1,19t-4]$ & $50t+i-5$ & $[50t-4,51t-7]$ & $53t+i-5$ & $[53t-4,54t-7]$\\
		$24t+i-3$ & $[24t-2,25t-5]$ & $20t-i-2$ & $[19t,20t-3]$ & $8t-i-2$ & $[7t,8t-3]$ \\
		$42t-i-5$ & $[41t-3,42t-6]$ & $30t-i-4$ & $[29t-2,30t-5]$ & $45t-i-5$ & $[44t-3,45t-6]$ \\
		$42t+2i-5$ & $[42t-3,44t-9]_{2}$ & $30t+2i-3$ & $[30t-1,32t-7]_{2}$ & $45t+2i-3$ & $[45t-1,47t-7]_{2}$ \\
		$18t-2i-2$ & $[16t+2,18t-4]_{2}$ & $50t-2i-6$ & $[48t-2,50t-8]_{2}$ & $53t-2i-7$ & $[51t-3,53t-9]_{2}$ \\
		$3i$ & $[3,3t-6]_{3}$ & $3i+1$ & $[4,3t-5]_{3}$ & $3i+2$ & $[5,3t-4]_{3}$ \\
		\hline
$\Delta F^+_{4,i}$ & $\Delta{\cal F}^+_4$ &  $\Delta F^+_{5,i}$ & $\Delta{\cal F}^+_5$ & $\Delta F^+_{6,i}$ & $\Delta{\cal F}^+_6$  \\\hline
		$34t+i-4$ & $[34t-3,35t-6]$ & $38t+i-4$ & $[38t-3,39t-6]$ & $47t+i-6$ & $[47t-5,48t-8]$ \\
		$9t-i-2$ & $[8t,9t-3]$ & $10t+i-1$ & $[10t,11t-3]$ & $15t-i-2$ & $[14t,15t-3]$ \\
		$5t-i$ & $[4t+2,5t-1]$ & $28t-i-4$ & $[27t-2,28t-5]$ & $12t-i-1$ & $[11t+1,12t-2]$ \\
		$25t+2i-2$ & $[25t,27t-6]_{2}$ & $48t+2i-5$ & $[48t-3,50t-9]_{2}$ & $32t+2i-4$ & $[32t-2,34t-8]_{2}$ \\
		$14t-2i-2$ & $[12t+2,14t-4]_{2}$ & $18t-2i-3$ & $[16t+1,18t-5]_{2}$ & $27t-2i-3$ & $[25t+1,27t-5]_{2}$ \\
		$20t+3i-2$ & $[20t+1,23t-8]_{3}$ & $20t+3i-1$ & $[20t+2,23t-7]_{3}$ & $20t+3i-3$ & $[20t,23t-9]_{3}$ \\
		\hline
		$\Delta F^+_{7,i}$ & $\Delta{\cal F}^+_7$ &  $\Delta F^+_{8,i}$ & $\Delta{\cal F}^+_8$ & $\Delta F^+_{9,i}$ & $\Delta{\cal F}^+_9$  \\
		\hline
		$3t+i$ & $[3t+1,4t-2]$ & $5t+i$ & $[5t+1,6t-2]$ & $23t+i-3$ & $[23t-2,24t-5]$ \\
		$39t+i-4$ & $[39t-3,40t-6]$ & $40t+i-4$ & $[40t-3,41t-6]$ & $28t+i-3$ & $[28t-2,29t-5]$ \\
		$7t-i-1$ & $[6t+1,7t-2]$ & $10t-i-2$ & $[9t,10t-3]$ & $16t-i-2$ & $[15t,16t-3]$ \\
		$42t+2i-4$ & $[42t-2,44t-8]_{2}$ & $45t+2i-4$ & $[45t-2,47t-8]_{2}$ & $51t+2i-6$ & $[51t-4,53t-10]_{2}$ \\
		$32t+2i-3$ & $[32t-1,34t-7]_{2}$ & $30t+2i-2$ & $[30t,32t-6]_{2}$ & $12t+2i-1$ & $[12t+1,14t-5]_{2}$ \\
		$35t+3i-3$ & $[35t,38t-9]_{3}$ & $35t+3i-2$ & $[35t+1,38t-8]_{3}$ &$35t+3i-4$ & $[35t-1,38t-10]_{3}$ \\
		\hline
\end{tabular}}
\caption{Positive differences from the first $9t-18$ base blocks in Lemma \ref{lem:m=8(mod9)}}\label{tab:diff-PSDS}
\end{table}

Note that every $F_{r,i}$ with $1\leq r\leq 9$ is of the form
\begin{align*}
F_{r,i}=\{0,\ \ \ b_{r,1}t+i+\epsilon_{r,1},\ \ \ b_{r,2}t+2 i+\epsilon_{r,2}, \ \ \ b_{r,3} t+3 i+\epsilon_{r,3}\},
\end{align*}
and we require that the ordered 4-multisets $(0,b_{r,1},b_{r,2},b_{r,3})$, $1\leq r\leq 9$, form a $(108,4,1)$-PLDF as given in Lemma \ref{lem:108-LDF}. After fixing these $b_{r,1}$, $b_{r,2}$ and $b_{r,3}$, we perform a computer search for suitable values of $\epsilon_{r,1}$, $\epsilon_{r,2}$ and $\epsilon_{r,3}$ such that the differences generated by these $F_{r,i}$ with $1\leq r\leq 9$ are pairwise distinct, where $\epsilon_{r,1}$, $\epsilon_{r,2}$ and $\epsilon_{r,3}$ are restricted to integers in $[-6,6]$. Once this is successful, we then use a computer search to find the remaining 17 base blocks by employing the Kramer--Mesner method, one of the most common methods for searching designs (see \cite[Section 9.2.1]{ko}). If the search fails, we adjust the values of $\epsilon_{r,1}$, $\epsilon_{r,2}$, $\epsilon_{r,3}$, and continue the process. Ultimately, the base blocks listed above are obtained.

To facilitate the reader to check the correctness of our results, we list all positive differences from $F_{r,i}$ with $1\leq r\leq 9$ in Table \ref{tab:diff-PSDS}, where $\F_r=\{F_{r,i}: 1\leq i\leq{t-2}\}$. We list all positive differences from the remaining 17 base blocks in Table \ref{tab:diff-base-blocks-remaining}.
\end{proof}

\begin{table}[t!]
\renewcommand{\arraystretch}{1.2}
\resizebox{\linewidth}{!}{
\begin{tabular}{|c|c|}
\hline
$\{4t-1,15t-1,38t-7,11t,34t-6,23t-6\}$ &
$\{14t-3,30t-3,53t-8,16t,39t-5,23t-5\}$ \\
[0.25ex]\hline
$\{3t-3,38t-5,48t-7,35t-2,45t-4,10t-2\}$ &
$\{4t+1,45t-3,54t-4,41t-4,50t-5,9t-1\}$ \\
[0.25ex]\hline
$\{3t-2,19t-3,44t-7,16t-1,41t-5,25t-4\}$ &
$\{5t,25t-1,53t-5,20t-1,48t-5,28t-4\}$ \\
[0.25ex]\hline
$\{4t,42t-4,48t-4,38t-4,44t-4,6t\}$ &
$\{11t-1,30t-2,53t-6,19t-1,42t-5,23t-4\}$ \\
[0.25ex]\hline
$\{3t,35t-3,54t-5,32t-3,51t-5,19t-2\}$ &
$\{14t-2,29t-4,54t-6,15t-2,40t-4,25t-2\}$ \\
[0.25ex]\hline
$\{11t-2,38t-6,50t-6,27t-4,39t-4,12t\}$ &
$\{8t-2,35t-5,53t-7,27t-3,45t-5,18t-2\}$ \\
[0.25ex]\hline
$\{3t-1,32t-4,50t-7,29t-3,47t-6,18t-3\}$ &
$\{12t-1,20t-2,44t-6,8t-1,32t-5,24t-4\}$ \\
[0.25ex]\hline
$\{14t-1,23t-3,48t-6,9t-2,34t-5,25t-3\}$ &
$\{7t-1,35t-4,51t-6,28t-3,44t-5,16t-2\}$ \\
[0.25ex]\hline
$\{6t-1,30t-4,40t-5,24t-3,34t-4,10t-1\}$ & \\
[0.25ex]\hline
\end{tabular}
}
\caption{Positive differences from the remaining 17 base blocks}
\label{tab:diff-base-blocks-remaining}
\end{table}

By a similar method used in the proof of Lemma \ref{lem:m=8(mod9)}, we can construct an $(m,4,3)$-PSDS for every positive integer $m\geq 5$ and $m \not\equiv 8 \pmod{9}$.

\begin{Lemma}\label{lem:other-PSDS}
There exists an $(m,4,3)$-PSDS for any positive integer $m \geq 5$ and $m \not\equiv 8 \pmod{9}$.
\end{Lemma}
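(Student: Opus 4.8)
The plan is to mirror the structure of the proof of Lemma~\ref{lem:m=8(mod9)}, replacing the residue class $m\equiv 8\pmod 9$ by the remaining classes $m\not\equiv 8\pmod 9$. The backbone in both cases is the $(108,4,1)$-PLDF of Lemma~\ref{lem:108-LDF}: writing $m$ in the form $9t+c$ for a suitable small offset $c$ depending on the residue of $m$ modulo $9$, I would build nine parametrized families $F_{r,i}$, $1\le r\le 9$, of the shape
$$F_{r,i}=\{0,\ b_{r,1}t+i+\epsilon_{r,1},\ b_{r,2}t+2i+\epsilon_{r,2},\ b_{r,3}t+3i+\epsilon_{r,3}\},$$
with $i$ ranging over an interval of the form $[1,t-\kappa]$, and with the coefficients $(0,b_{r,1},b_{r,2},b_{r,3})$ taken \emph{verbatim} from the nine base blocks of the PLDF. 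Because those coefficients already form a PLDF, the union of positive differences contributed by the $F_{r,i}$ automatically tiles a contiguous block of integers at the scale of $t$, up to boundary corrections controlled by the shifts $\epsilon_{r,j}$. The residual gaps near the endpoints of the covered interval are then filled by a bounded number of sporadic base blocks, exactly as in Lemma~\ref{lem:m=8(mod9)} and as tabulated there in Table~\ref{tab:diff-PSDS} and Table~\ref{tab:diff-base-blocks-remaining}.

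First I would dispose of the small cases by Lemma~\ref{lem:small-PSDS}, which already supplies an $(m,4,3)$-PSDS for all $5\le m\le 16$; this clears every residue class modulo $9$ at least once and gives the base of the construction. Next, for each residue $c\in\{0,1,2,3,4,5,6,7\}$ (i.e.\ $m=9t+c'$ with the appropriate normalization so that the total block count matches $m$), I would record the corresponding values of $\epsilon_{r,j}$, restricted to the range $[-6,6]$ as in the proof of Lemma~\ref{lem:m=8(mod9)}. For each fixed residue the shifts are chosen once and for all by a finite computer search so that the three arithmetic-progression difference-streams (steps $1$, $2$, $3$ in the variable $i$) interleave without collision and cover the intended block of residues modulo the scale $t$. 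Having fixed the $\epsilon_{r,j}$, a second computer search via the Kramer--Mesner method supplies the finitely many sporadic blocks that patch the endpoints, and one verifies that the grand union of positive differences equals $\{3,4,\ldots,2+6m\}$, i.e.\ the set $\{c,c+1,\ldots,c-1+6m\}$ with $c=3$.

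The main obstacle is not conceptual but bookkeeping: because eight residue classes must be handled rather than one, there are eight separate parameter tables (shifts $\epsilon_{r,j}$, index ranges $I_r$, and sporadic blocks) to produce and to verify, and the endpoint corrections differ from class to class since the number of sporadic blocks and their exact values depend on $c$. The delicate point in each case is ensuring that the half-covered intervals flagged with $*$ in the PLDF analysis (the even-step differences $b_{r,2}-b_{r,0}$ and their inverses, cf.\ Table~\ref{tab:auxtab}) pair up correctly across the nine families so that every residue is covered exactly once and no difference is repeated; this is precisely the condition guaranteeing that the $\epsilon_{r,j}$ of opposite parity separate the two arithmetic-progression streams. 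Once the existence of consistent shifts is established for one small representative $t$ in each class, the same shifts work for all larger $t$ in that class by the scale-invariance built into the PLDF, so the infinite families follow uniformly and only the finitely many genuinely small orders (already covered by Lemma~\ref{lem:small-PSDS}) need separate attention.
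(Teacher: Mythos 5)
Your proposal follows the same route as the paper: write $m=9t+x$ with $0\le x\le 7$, take nine parametrized families whose coefficients of $t$ are exactly the $(108,4,1)$-PLDF of Lemma~\ref{lem:108-LDF}, determine the shifts by a per-residue computer search, patch with finitely many sporadic blocks found by the Kramer--Mesner method, and dispose of $5\le m\le 16$ by Lemma~\ref{lem:small-PSDS}. However, one concrete step in your plan is false and would make the search fail for the larger residues: the claim that the shifts $\epsilon_{r,j}$ can be \emph{restricted to $[-6,6]$ as in the proof of Lemma~\ref{lem:m=8(mod9)}}. That restriction only works for $m=9t-1$, where the target interval $[3,54t-4]$ is slightly \emph{shorter} than the $54t$-scale region the PLDF tiles. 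For $m=9t+x$ the positive differences must cover $[3,54t+6x+2]$, and with all shifts in $[-6,6]$ and $i\le t-2$ the largest difference any family can produce is $53t+(t-2)+6=54t+4$. Hence the whole top segment $[54t+5,\,54t+6x+2]$ --- forty values when $x=7$ --- must come from the $18+x=25$ sporadic blocks alone. A sporadic block $\{0,a,b,c\}$ has differences $g_1,g_2,g_3,g_1+g_2,g_2+g_3,g_1+g_2+g_3$ with $(g_1,g_2,g_3)=(a,b-a,c-b)$; since these sum to at most $54t+6x+2$, at most four of them can lie in the top segment, and any block placing two or more there is forced to place companion differences in $[3,6x-2]\subseteq[3,40]$ (indeed $j$ top differences force at least $\max(1,j-2)$ such companions). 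But with bounded shifts, the three step-$3$ streams $3i+c_r$ (which must have $c_r\ge 0$ and pairwise distinct residues modulo $3$, to avoid the forbidden differences $1,2$ and mutual collisions) cover all of $[3,40]$ except at most $4$ values. A short optimization then bounds the number of top-segment differences achievable by $25$ sporadic blocks by $25+3\cdot 4=37<40$. So for $x=7$ (and similarly for the other large residues) no valid choice of sporadic blocks exists under your $[-6,6]$ restriction, no matter how long the computer searches.

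This is exactly why Table~\ref{tab:4.3} in the paper lets the shifts grow roughly linearly in $x$, up to $41$ when $x=7$: the family anchored at $53t$ then itself climbs to $54t+39$, leaving only a handful of top values, each coverable by a single sporadic block with one large difference. The missing idea in your proposal is thus that the extra length $6x$ of the target interval must be absorbed by rescaling the shifts across the whole range, not dumped onto the sporadic blocks. A secondary, smaller point: your closing claim that verifying one small representative $t$ in each class suffices ``by scale-invariance'' is not a proof as stated. What makes the construction uniform in $t$ is that all difference streams are arithmetic progressions whose endpoints are linear in $t$, so coverage must be verified symbolically in $t$ (as the paper does via its interval tables and accompanying script), not numerically at a single value of $t$.
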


\begin{proof}
For $5\leq m\leq 16$ and $m\neq 8$, an $(m,4,3)$-PSDS exists by Lemma \ref{lem:small-PSDS}. For $m \geq 18$, let $m = 9t+x$, where $t\geq 2$ and $0 \leq x \leq 7$. The first $9t-18$ base blocks are listed below:
	\begin{center}
		\begin{tabular}{llll}
			$F_{1,i}:=\{0$, & $18t+i+a_1$, & $42t+2i+b_1$, & $3i+c_1\}$, \\
			$F_{2,i}:=\{0$, & $50t+i+a_2$, & $30t+2i+b_2$, & $3i+c_2\}$, \\
			$F_{3,i}:=\{0$, & $53t+i+a_3$, & $45t+2i+b_3$, & $3i+c_3\}$, \\
			$F_{4,i}:=\{0$, & $34t+i+a_4$, & $25t+2i+b_4$, & $20t+3i+c_4\}$, \\
			$F_{5,i}:=\{0$, & $38t+i+a_5$, & $48t+2i+b_5$, & $20t+3i+c_5\}$, \\
			$F_{6,i}:=\{0$, & $47t+i+a_6$, & $32t+2i+b_6$, & $20t+3i+c_6\}$, \\
			$F_{7,i}:=\{0$, & $3t+i+a_7$, & $42t+2i+b_7$, & $35t+3i+c_7\}$, \\
			$F_{8,i}:=\{0$, & $5t+i+a_8$, & $45t+2i+b_8$, & $35t+3i+c_8\}$, \\
			$F_{9,i}:=\{0$, & $23t+i+a_9$, & $51t+2i+b_9$, & $35t+3i+c_9\}$, \\
		\end{tabular}
	\end{center}
where $1\leq i\leq t-2$, and $a_j, b_j, c_j$ for $1 \leq j \leq 9$ are listed in Table \ref{tab:4.3} according to the value of $x$. The remaining $18+x$ base blocks are provided in Appendix \ref{Appendix:main}.

For each case of $x$, the search for the parameters $a_j, b_j, c_j$ and the sporadic base blocks took between half a day and two days on a standard personal computer.

To help readers verify our results, the Python script is available at \cite{Checking}.
\end{proof}

\begin{table}[!t]\setlength{\tabcolsep}{2.9pt}
\begin{tabularx}{\textwidth}{|c|ccc|ccc|ccc|ccc|ccc|ccc|ccc|ccc|ccc|}\hline
			$x$ & $a_1$ & $b_1$ & $c_1$ & $a_2$ & $b_2$ & $c_2$ & $a_3$ & $b_3$ & $c_3$ & $a_4$ & $b_4$ & $c_4$ & $a_5$ & $b_5$ & $c_5$ & $a_6$ & $b_6$ & $c_6$ & $a_7$ & $b_7$ & $c_7$ & $a_8$ & $b_8$ & $c_8$ & $a_9$ & $b_9$ & $c_9$\\
			\hline
			$0$ & $0$ & $0$ & $0$ & $0$ & $1$ & $1$ & $0$ & $1$ & $2$ & $1$ & $1$ & $2$ & $0$ & $0$ & $1$ & $0$ & $0$ & $0$ & $0$ & $1$ & $1$ & $1$ & $2$ & $3$ & $0$ & $1$ & $2$\\
			
			$1$ & $2$ & $4$ & $0$ & $5$ & $3$ & $1$ & $5$ & $5$ & $2$ & $4$ & $4$ & $4$ & $4$ & $5$ & $3$ & $5$ & $4$ & $2$ & $0$ & $5$ & $3$ & $1$ & $6$ & $5$ & $3$ & $6$ & $4$\\
			
			$2$ & $4$ & $9$ & $0$ & $11$ & $6$ & $1$ & $12$ & $11$ & $2$ & $7$ & $7$ & $6$ & $8$ & $11$ & $5$ & $10$ & $7$ & $4$ & $1$ & $10$ & $9$ & $1$ & $10$ & $8$ & $5$ & $11$ & $7$\\
			
			$3$ & $6$ & $14$ & $0$ & $16$ & $10$ & $1$ & $18$ & $16$ & $2$ & $12$ & $10$ & $8$ & $12$ & $16$ & $7$ & $15$ & $11$ & $6$ & $1$ & $15$ & $11$ & $2$ & $15$ & $13$ & $7$ & $17$ & $12$\\
			
			$4$ & $8$ & $18$ & $0$ & $22$ & $13$ & $1$ & $23$ & $20$ & $2$ & $15$ & $11$ & $9$ & $17$ & $22$ & $10$ & $20$ & $14$ & $8$ & $1$ & $19$ & $16$ & $3$ & $21$ & $17$ & $10$ & $22$ & $15$\\
			
			$5$ & $10$ & $23$ & $0$ & $27$ & $16$ & $1$ & $29$ & $25$ & $2$ & $18$ & $14$ & $11$ & $21$ & $27$ & $12$ & $26$ & $19$ & $13$ & $2$ & $24$ & $20$ & $4$ & $26$ & $21$ & $13$ & $28$ & $19$\\
			
			$6$ & $12$ & $28$ & $0$ & $33$ & $20$ & $1$ & $35$ & $30$ & $2$ & $22$ & $17$ & $15$ & $25$ & $33$ & $14$ & $31$ & $21$ & $13$ & $2$ & $29$ & $24$ & $4$ & $31$ & $25$ & $15$ & $34$ & $23$\\
			
			$7$ & $14$ & $32$ & $0$ & $38$ & $23$ & $1$ & $41$ & $35$ & $2$ & $26$ & $20$ & $17$ & $29$ & $38$ & $16$ & $36$ & $25$ & $15$ & $2$ & $33$ & $28$ & $5$ & $36$ & $29$ & $17$ & $40$ & $27$\\
			\hline
\end{tabularx}
\caption{Parameters $a_j, b_j, c_j$ in Lemma \ref{lem:other-PSDS}}\label{tab:4.3}
\end{table}

\begin{proof}[Proof of Theorem \ref{thm:(m,4,3)-PSDS}]
One can combine Lemmas \ref{lem:m=8(mod9)} and \ref{lem:other-PSDS} to obtain the sufficiency of Theorem \ref{thm:(m,4,3)-PSDS}. The necessity comes from Proposition \ref{thm:PSDS-nece}(2).
\end{proof}

\section{Existence of $(v,4,\lambda)$-PDFs}\label{sec:multi}

This section establishes the existence of $(v,4,\lambda)$-PDFs for every $\lambda\geq 1$. We provide Python code \cite{Checking} to facilitate verification of our results for the cases $\lambda=2,3,6$.

\subsection{$\lambda=1$}\label{sec:multi-0}

Now we provide an affirmative answer to Conjecture \ref{conj:Berm}.

\begin{Lemma}\label{thm:4-PDF}
A $(v,4,1)$-PDF exists if and only if $v\equiv 1\pmod{12}$ and $v\neq 25,37$.
\end{Lemma}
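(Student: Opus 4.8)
The plan is to split the statement into its two directions and, for the substantive direction, to reduce the whole infinite family to two facts already established: the existence of $(m,4,3)$-PSDSs for every $m\geq 5$ (Theorem \ref{thm:(m,4,3)-PSDS}) and the tabulated small-order data (Lemma \ref{lem:k=4-small orders}). For \emph{necessity}, I would observe that every $(v,4,1)$-PDF is in particular a $(v,4,1)$-CDF, so the counting identity recorded just after the definition of a CDF forces $\lambda(v-1)\equiv 0\pmod{k(k-1)}$; with $k=4$ and $\lambda=1$ this reads exactly $v\equiv 1\pmod{12}$. The two genuine exclusions $v\in\{25,37\}$ are precisely the nonexistence values supplied by Proposition \ref{thm:non-existence-PDF}(ii). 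Hence no $(v,4,1)$-PDF can exist outside the claimed range.

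For \emph{sufficiency} I would exploit the bridge between PSDSs and PDFs. Recall that an $(m,4,1)$-PSDS is equivalent to a $(12m+1,4,1)$-PDF, and that by Lemma \ref{lem:c=3 to 1} an $(m,4,3)$-PSDS yields an $(m+1,4,1)$-PSDS, hence a $(12m+13,4,1)$-PDF. So for any $v\equiv 1\pmod{12}$ with $v\geq 73$, I would set $m=(v-13)/12\geq 5$, invoke Theorem \ref{thm:(m,4,3)-PSDS} to obtain an $(m,4,3)$-PSDS, and run it through Lemma \ref{lem:c=3 to 1} to produce the desired PDF. As $m$ ranges over all integers $\geq 5$, the values $v=12m+13$ sweep out exactly the residue class $v\equiv 1\pmod{12}$ with $v\geq 73$, so this single reduction handles all sufficiently large admissible orders at once.

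It then remains only to treat the finitely many admissible orders below $73$, namely $v\in\{13,49,61\}$, the values $v=25,37$ having already been excluded. Each of these lies well within the range $v\leq 12001$ covered by Lemma \ref{lem:k=4-small orders}, which furnishes the required PDFs directly. Combining the three ranges yields a $(v,4,1)$-PDF for every $v\equiv 1\pmod{12}$ with $v\geq 13$ and $v\notin\{25,37\}$, completing the sufficiency.

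I expect no serious obstacle at this stage, since all the constructive difficulty has been absorbed into Theorem \ref{thm:(m,4,3)-PSDS}, whose proof rests on the $(108,4,1)$-PLDF of Lemma \ref{lem:108-LDF} together with the layered-difference-family machinery of Section \ref{sec:layered-DF}. The only points demanding care are the arithmetic of the substitution $v=12m+13$ (verifying that $m\geq 5$ corresponds exactly to $v\geq 73$ and that the map is onto the correct residue class) and confirming that the small-order list $\{13,49,61\}$ is complete. Were Theorem \ref{thm:(m,4,3)-PSDS} not available, the real hard part would be constructing an infinite family of $(m,4,3)$-PSDSs covering a full congruence class of $m$, which is precisely what the PLDF approach accomplishes; given that theorem, the present lemma is essentially a bookkeeping assembly of existing pieces.
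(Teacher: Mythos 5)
Your proposal is correct and follows essentially the same route as the paper: necessity via the block-count congruence plus the known nonexistence at $v\in\{25,37\}$, and sufficiency by feeding Theorem \ref{thm:(m,4,3)-PSDS} through Lemma \ref{lem:c=3 to 1} to cover $v=12m+13\geq 73$, with Lemma \ref{lem:k=4-small orders} handling the remaining small orders. The only cosmetic difference is that you cite Proposition \ref{thm:non-existence-PDF}(ii) for the two exceptions where the paper cites Lemma \ref{lem:k=4-small orders}; both are valid sources.
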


\begin{proof}
A $(v,4,1)$-PDF consists of $(v-1)/12$ base blocks, so $v\equiv 1\pmod{12}$. By Lemma \ref{lem:k=4-small orders}, there is no $(v,4,1)$-PDF for $v\in\{25,37\}$. On the other hand, by Theorem \ref{thm:(m,4,3)-PSDS} and Lemma \ref{lem:c=3 to 1}, there exists an $(m+1,4,1)$-PSDS for every $m\geq 5$, which implies a $(12m+13,4,1)$-PDF. Combine Lemma \ref{lem:k=4-small orders} to complete the proof.
\end{proof}

\subsection{$\lambda=2$}\label{sec:multi-1}

\begin{Lemma}\label{lem:36-4-2}
There exists a $(36,4,2)$-PLDF.
\end{Lemma}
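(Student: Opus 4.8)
The plan is to prove this lemma by the same exhibit-and-verify strategy used for the $(72,4,1)$-LDF in Lemma \ref{lem:72-LDF} and the $(108,4,1)$-PLDF in Lemma \ref{lem:108-LDF}: produce six explicit base blocks and check the defining identity. First I would pin down how many base blocks are needed. Repeating the weight count behind Proposition \ref{LDF-nec-1} in the perfect setting, each base block of a $(36,4,\lambda)$-PLDF contributes total coefficient $\binom{4}{2}=6$ to $\Delta^{*+}$, since each of its six ordered pairs $(b_{s_1},b_{s_2})$ supplies $s_2-s_1$ differences, each with coefficient $1/(s_2-s_1)$, hence total coefficient $1$. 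The target $\underline{2}\sum_{g=0}^{17} g$ carries total coefficient $2\cdot 18=36$, so a $(36,4,2)$-PLDF must consist of exactly $36/6=6$ base blocks. The goal is thus to exhibit ordered $4$-multi-subsets $B_1,\dots,B_6$ of $\Z_{36}$ with $\sum_{j=1}^{6}\Delta^{*+}B_j=\underline{2}\sum_{g=0}^{17} g$.

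Next I would locate these blocks by a computer search, exactly as the nine blocks of Lemma \ref{lem:108-LDF} were obtained; in keeping with the intended block form \eqref{eqn:general form} I would restrict attention to blocks whose first entry is $0$. Once candidate blocks are in hand, the verification is a finite routine computation: for each $B_j$ I would unfold its six ordered pairs, whose gaps $s_2-s_1$ take the values $1,1,1,2,2,3$, into $1,1,1,2,2,3$ differences carrying coefficients $1,1,1,\tfrac12,\tfrac12,\tfrac13$ respectively, selecting in each case the branch of $\Delta^{*+}(b_{s_1},b_{s_2})$ dictated by comparing $b_{s_1}$ and $b_{s_2}$ modulo $36$. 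I would then add the six resulting elements of $\mathbb{Q}[\Z_{36}]$ and confirm that each of $0,1,\dots,17$ receives total coefficient exactly $2$, while $18,\dots,35$ receive none.

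The crux is the search, which must reconcile two competing requirements. The \emph{range requirement} is that all $36$ computed differences land in $\{0,1,\dots,17\}$: for instance $(b_0,b_1)=(0,b_1)$ contributes $b_1$ outright, forcing $b_1\le 17$, while $(b_0,b_3)=(0,b_3)$ contributes the run $\{b_3,b_3+1,b_3+2\}$, forcing $b_3\le 15$, and analogous bounds constrain the remaining pairs. The \emph{balancing requirement} is more delicate: the fractional coefficients—two $\tfrac12$'s from the length-$2$ pairs and one $\tfrac13$ from the length-$3$ pair of each block—must aggregate across the six blocks into the integer multiplicity $2$ on every residue. This forces the length-$2$ and length-$3$ runs of different blocks to overlap in tightly coordinated patterns, and it is precisely this balancing obstruction that rules out a $(72,4,1)$-PLDF, as noted before Lemma \ref{lem:108-LDF}. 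Consequently I expect essentially all the effort to reside in the search; once six compatible blocks are produced, the displayed identity can be checked by direct inspection, completing the proof.
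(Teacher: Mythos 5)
Your proposal correctly identifies the paper's strategy (exhibit six explicit ordered $4$-multi-subsets of $\Z_{36}$ and verify $\sum_{j=1}^{6}\Delta^{*+}B_j=\underline{2}\sum_{g=0}^{17}g$), and your preliminary analysis is sound: the coefficient count showing exactly $6$ base blocks are needed is right, the range constraints (e.g.\ $b_1\le 17$, $b_3\le 15$ when $b_0=0$) are correctly derived, and the verification routine you describe is exactly what one must do. However, there is a genuine gap: you never actually produce the six base blocks. The statement is an existence claim, and for a proof by explicit construction the witness \emph{is} the proof; a description of how one would search for a witness, however accurate, does not establish that the search succeeds. Your own remarks make this point sharply --- you note that the analogous balancing obstruction rules out a $(72,4,1)$-PLDF, so the success of such a search is not automatic and cannot be assumed. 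As written, your argument would equally well "prove" the existence of a $(72,4,1)$-PLDF, which is false.

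The paper's proof consists precisely of the ingredient you defer: it exhibits the blocks
$B_{1}=(0,0,3,15)$, $B_{2}=(0,0,6,15)$, $B_{3}=(0,11,1,15)$, $B_{4}=(0,10,1,11)$, $B_{5}=(0,14,6,11)$, $B_{6}=(0,17,3,11)$,
and asserts the (finite, mechanical) check that $\sum_{j=1}^{6}\Delta^{*+}B_{j}=\underline{2}\sum_{g=0}^{17}g$. To close the gap you must either display such a sextuple of blocks (these or any other valid ones) together with the verification, or replace the appeal to a search by an argument that a valid configuration necessarily exists --- and no such non-constructive argument is available here.
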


\begin{proof}
Take the following ordered 4-multi-subsets of $\Z_{36}$:
	\begin{center}
		\begin{tabular}{lll}
			$B_{1}=(0,0,3,15)$, &
			$B_{2}=(0,0,6,15)$, &
			$B_{3}=(0,11,1,15)$,\\
			$B_{4}=(0,10,1,11)$,&
			$B_{5}=(0,14,6,11)$,&
			$B_{6}=(0,17,3,11)$.\\
		\end{tabular}
	\end{center}
One can verify that $\sum_{j=1}^{6} \Delta^{*+} B_{j}=\underline{2}\sum_{g=0}^{17} g$. Then $B_{1},B_{2},\dots,B_{6}$ form a $(36,4,2)$-PLDF.
\end{proof}

\begin{Lemma}\label{thm:(v,4,2)PDF}
There exists a $(v,4,2)$-PDF for any $v \equiv 1 \pmod{6}$ and $v\geq 13$.
\end{Lemma}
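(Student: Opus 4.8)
The plan is to establish the existence of a $(v,4,2)$-PDF for all $v \equiv 1 \pmod 6$ with $v \geq 13$ by reducing to two residue classes modulo $12$ and handling each via the layered difference family machinery developed above. Since $\lambda = 2$, the necessary condition $\lambda(v-1) \equiv 0 \pmod{12}$ becomes $v \equiv 1 \pmod 6$, so the admissible values split naturally into $v \equiv 1 \pmod{12}$ and $v \equiv 7 \pmod{12}$. For the former class, I would simply observe that a $(v,4,1)$-PDF exists by Lemma \ref{thm:4-PDF} (with the two exceptions $v = 25, 37$, which must be covered separately by a direct $(25,4,2)$- and $(37,4,2)$-PDF), and taking each base block twice yields a $(v,4,2)$-PDF. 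The real content is therefore the class $v \equiv 7 \pmod{12}$, where no doubling trick is available.

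For $v \equiv 7 \pmod{12}$, I would write $v = 36t + r$ for an appropriate residue $r$ and apply the $(36,4,2)$-PLDF of Lemma \ref{lem:36-4-2}. The six base blocks $B_1,\dots,B_6$ of that PLDF supply the coefficients $b_{r,j}$ of $t$ in the general form \eqref{eqn:general form}, producing the bulk of the base blocks $F_{r,i}$ for $i$ ranging over an interval of length roughly $t$. Because the PLDF guarantees that the ``layered'' positive differences (counted with the rational coefficients $\frac{1}{s_2-s_1}$) cover $\{0,1,\dots,v/2-1\}$ with multiplicity $\lambda = 2$, the intervals of positive differences generated by these $F_{r,i}$ tile the target range $\{1,\dots,(v-1)/2\}$ twice, up to small boundary gaps. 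One then selects shift parameters $\epsilon_{r,j}$ (small integers) so that the six families of arithmetic-progression difference-intervals are pairwise disjoint and correctly aligned, exactly as in the proof of Lemma \ref{lem:m=8(mod9)}, and fills in the remaining sporadic base blocks by a computer search (e.g.\ the Kramer--Mesner method) for each residue class of $t$ modulo the relevant modulus.

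The main obstacle will be the boundary bookkeeping: the layered framework controls the \emph{interiors} of the difference intervals, but the endpoints of each interval, together with the half-covered intervals (those arising from the $\frac{1}{2}$- and longer-step contributions), must be matched precisely against the finitely many leftover differences supplied by the sporadic blocks. This requires partitioning the range of $v$ (equivalently, of $t$) into finitely many congruence classes and, for each class, verifying disjointness and completeness of the intervals by a direct but routine case analysis analogous to Tables \ref{tab:diff-PSDS} and \ref{tab:diff-base-blocks-remaining}. I would dispatch the small values of $v$ (below whatever threshold the construction requires, say $v \leq 12 t_0$ for the smallest admissible $t$) by direct construction or computer search, and I expect the parameter-selection step for the $\epsilon_{r,j}$ and the sporadic blocks to be where essentially all the genuine effort lies, with the asymptotic correctness being an automatic consequence of the PLDF property.
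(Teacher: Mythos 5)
Your proposal follows essentially the same route as the paper's proof: doubling a $(v,4,1)$-PDF from Lemma \ref{thm:4-PDF} for $v\equiv 1\pmod{12}$, handling $v=25,37$ and a few small orders directly, and for $v\equiv 7\pmod{12}$ building the bulk of the base blocks from the $(36,4,2)$-PLDF of Lemma \ref{lem:36-4-2} via the form \eqref{eqn:general form}, with shift parameters and sporadic blocks found by computer search over finitely many residue classes (the paper uses $v=36t+6x+1$, $x\in\{-1,1,3\}$). The approach and the key lemmas invoked coincide with the paper's, so the proposal is correct in outline.
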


\begin{proof}
For $v\equiv 1 \pmod{12}$ and $v\neq 25,37$, by Lemma \ref{thm:4-PDF}, there exists a $(v,4,1)$-PDF. Taking two copies of a $(v,4,1)$-PDF yields a $(v,4,2)$-PDF. For $v\in\{25,37\}\cup \{19, 31, 43, 55\}$, see Appendix \ref{Appendix:small-PDF-2}.

For $v\equiv 7 \pmod{12}$ and $v\geq 67$, let $v = 36t + 6x + 1$, where $t \geq 2$ and $x \in \{-1, 1, 3\}$. A $(v,4,2)$-PDF consists of $6t+x$ base blocks. The first $6t-12$ base blocks are listed below:
	\begin{center}		
		\begin{tabular}{llll}
			$F_{1,i}:=\{0$, & $i+a_1$, & $3t+2i+b_1$, & $15t+3i+c_1\}$, \\
			$F_{2,i}:=\{0$, & $i+a_2$, & $6t+2i+b_2$, & $15t+3i+c_2\}$, \\
			$F_{3,i}:=\{0$, & $11t+i+a_3$, & $t+2i+b_3$, & $15t+3i+c_3\}$, \\
			$F_{4,i}:=\{0$, & $10t+i+a_4$, & $t+2i+b_4$, & $11t+3i+c_4\}$, \\
			$F_{5,i}:=\{0$, & $14t+i+a_5$, & $6t+2i+b_5$, & $11t+3i+c_5\}$, \\
			$F_{6,i}:=\{0$, & $17t+i+a_6$, & $3t+2i+b_6$, & $11t+3i+c_6\}$, \\
		\end{tabular}
	\end{center}
where $1\leq i\leq t-2$, and $a_j,b_j,c_j$ for $1 \leq j \leq 6$ are listed in Table \ref{tab:5.3} according to the value of $x$. The remaining $12+x$ base blocks are provided in Appendix \ref{Appendix:big-PDF-2}.

Note that the coefficients of $t$ in the first $6t-12$ base blocks form a $(36,4,2)$-PLDF as given in Lemma \ref{lem:36-4-2}. For each $x$, the search for the parameters $a_j, b_j, c_j$ and the sporadic base blocks took under an hour on a standard personal computer. Analogous results were obtained for $\lambda = 3$ and $6$, and will not be repeated hereafter.
\end{proof}

\begin{table}[htb]\centering
\begin{tabular}{|c|ccc|ccc|ccc|ccc|ccc|ccc|}\hline
$x$ & $a_1$ & $b_1$ & $c_1$ & $a_2$ & $b_2$ & $c_2$ & $a_3$ & $b_3$ & $c_3$ & $a_4$ & $b_4$ & $c_4$ & $a_5$ & $b_5$ & $c_5$ & $a_6$ & $b_6$ & $c_6$\\\hline
			$-1$ & $1$ & $0$ & $0$ & $0$ & $-1$ & $-2$ & $-1$ & $0$ & $-1$ & $1$ & $1$ & $-1$ & $0$ & $0$ & $1$ & $-1$ & $1$ & $0$\\
			$1$ & $0$ & $0$ & $3$ & $2$ & $2$ & $4$ & $1$ & $1$ & $2$ & $3$ & $1$ & $1$ & $3$ & $1$ & $3$ & $4$ & $1$ & $2$\\
			$3$ & $1$ & $1$ & $7$ & $2$ & $4$ & $9$ & $5$ & $2$ & $8$ & $6$ & $2$ & $7$ & $7$ & $3$ & $6$ & $9$ & $2$ & $5$\\\hline
\end{tabular}
\caption{Parameters $a_j, b_j, c_j$ in Lemma \ref{thm:(v,4,2)PDF}}\label{tab:5.3}
\end{table}

\subsection{$\lambda=3$}\label{sec:multi-2}

\begin{Lemma}\label{lem:24-4-3}
There exists a $(24,4,3)$-PLDF.
\end{Lemma}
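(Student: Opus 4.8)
The plan is to exhibit six explicit base blocks and verify the defining identity directly, exactly as in Lemmas \ref{lem:108-LDF} and \ref{lem:36-4-2}. A coefficient count analogous to Proposition \ref{LDF-nec-1} pins down the number of blocks: each base block of a $(24,4,3)$-PLDF contributes total coefficient $\binom{4}{2}=6$ (each pair $(s_1,s_2)$ yields $s_2-s_1$ differences of coefficient $1/(s_2-s_1)$, summing to $1$), whereas the target $\underline{3}\sum_{g=0}^{11}g$ carries total mass $3\cdot 12=36$. Hence a $(24,4,3)$-PLDF must consist of $36/6=6$ ordered $4$-multi-subsets of $\Z_{24}$, and the task reduces to locating six quadruples $B_1,\dots,B_6$ whose combined $\Delta^{*+}$ equals $\underline{3}\sum_{g=0}^{11}g$ in $\mathbb{Q}[\Z_{24}]$.

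First I would recall the structure of $\Delta^{*+}B$ for a quadruple $B=(b_0,b_1,b_2,b_3)$, since this dictates the search. The six ordered pairs split by gap $s_2-s_1$: the three adjacent pairs $(0,1),(1,2),(2,3)$ each contribute a single difference of coefficient $1$; the two gap-two pairs $(0,2),(1,3)$ each contribute two differences of coefficient $\tfrac12$; and the gap-three pair $(0,3)$ contributes three differences of coefficient $\tfrac13$. For each pair the case distinction in the definition selects the increasing run starting at $b_{s_2}-b_{s_1}$ if $b_{s_1}\le b_{s_2}$ modulo $24$, and otherwise the decreasing run starting at $b_{s_1}-b_{s_2}-1$. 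Every difference produced must land in $\{0,1,\dots,11\}$, and collectively they must cover each residue there with total coefficient exactly $3$.

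The heart of the argument is locating the six quadruples, which is a finite constraint-satisfaction problem over $\Z_{24}$. I would organize the search exactly as the framework of Section \ref{sec:layered-DF} suggests: first arrange the coefficient-$\tfrac12$ and coefficient-$\tfrac13$ contributions from the gap-two and gap-three pairs so that their fractional multiplicities combine into integers, then adjust the remaining entries so the coefficient-$1$ differences fill the residual gaps. The main obstacle is simply the existence of a feasible configuration: because the coefficients $1,\tfrac12,\tfrac13$ must combine to the integer multiplicity $3$ at all twelve target points simultaneously, the fractional parts are tightly constrained and a naive search can stall. I would mitigate this by restricting to base blocks that cluster by their last coordinate (mirroring the $(36,4,2)$- and $(108,4,1)$-cases, whose blocks share common final entries), which sharply cuts the search space.

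Once six suitable quadruples are in hand, the verification is entirely mechanical: compute $\Delta^{*+}B_j$ for each $j$ via the case distinction above, form the formal sum in $\mathbb{Q}[\Z_{24}]$, and confirm it equals $\underline{3}\sum_{g=0}^{11}g$. This closing check is a finite computation that I would display for one representative block and otherwise delegate to the accompanying verification script, thereby completing the proof.
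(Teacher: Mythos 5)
There is a genuine gap: your proposal never actually exhibits the six base blocks. The entire content of this lemma is the existence of a specific configuration in $\Z_{24}$, and the paper's proof consists precisely of writing down the witnesses — namely $B_{1}=(0,0,8,0)$, $B_{2}=(0,8,10,0)$, $B_{3}=(0,10,4,0)$, $B_{4}=(0,9,10,3)$, $B_{5}=(0,11,0,3)$, $B_{6}=(0,11,6,3)$ — and checking $\sum_{j=1}^{6}\Delta^{*+}B_{j}=\underline{3}\sum_{g=0}^{11}g$. Your text reduces the problem to a finite constraint-satisfaction search and describes how you would organize that search, but reducing to a finite search does not establish that the search succeeds. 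The paper itself provides a cautionary example on exactly this point: it reports that computer search shows \emph{no} $(72,4,1)$-PLDF exists, even though that case sits in the same framework with the same kind of finite search space. So feasibility is not automatic, and without producing an explicit solution (or some other argument ruling out infeasibility), existence remains unproven.

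The peripheral material in your proposal is sound: the coefficient count (each block carries total mass $\binom{4}{2}=6$ in $\Delta^{*+}$, so a $(24,4,3)$-PLDF has $36/6=6$ blocks) is a correct adaptation of Proposition \ref{LDF-nec-1} to the perfect setting, and your description of how $\Delta^{*+}B$ decomposes over pairs by gap $s_2-s_1$, including the case distinction selecting positive differences, matches the definition. But these observations only constrain what a solution must look like; they do not produce one. To close the gap you must state the six quadruples explicitly and perform (or cite a script performing) the verification — which is exactly what the paper does, and is the step your proposal leaves as an unfulfilled promise.
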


\begin{proof}
Take the following ordered 4-multi-subsets of $\Z_{24}$:
	\begin{center}
		\begin{tabular}{lll}
			$B_{1}=(0,0,8,0)$,&
			$B_{2}=(0,8,10,0)$,&
			$B_{3}=(0,10,4,0)$,\\
			$B_{4}=(0,9,10,3)$,&
			$B_{5}=(0,11,0,3)$,&
			$B_{6}=(0,11,6,3)$.\\
		\end{tabular}
	\end{center}
One can check that $\sum_{j=1}^{6} \Delta^{*+} B_{j}=\underline{3}\sum_{g=0}^{11} g$. Then $B_{1},B_{2},\dots,B_{6}$ form a $(24,4,3)$-PLDF.	
\end{proof}

\begin{Lemma}\label{thm:(v,4,3)PDF}
There exists a $(v,4,3)$-PDF for any $v \equiv 1 \pmod{4}$ and $v\geq 13$.
\end{Lemma}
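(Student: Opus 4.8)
The plan is to follow the exact template established for $\lambda=2$ in Lemma~\ref{thm:(v,4,2)PDF}, now using the $(24,4,3)$-PLDF from Lemma~\ref{lem:24-4-3} as the backbone. The overall structure splits the target congruence class $v\equiv 1\pmod 4$ into finitely many residue classes modulo a suitable period, handling small orders by computer-found sporadic constructions and large orders by a single parametrized family whose coefficients of $t$ are dictated by the PLDF. Concretely, I would first dispose of all sufficiently small $v$ (those below the threshold where the asymptotic family kicks in, say $v\leq V_0$ for some explicit $V_0$), listing explicit $(v,4,3)$-PDFs in an appendix and deferring their routine verification to the reader or the accompanying Python script \cite{Checking}.

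For the generic case I would write $v=24t+6x+1$ (mirroring the $v=36t+6x+1$ decomposition used for $\lambda=2$), where the $6$ comes from the block count $\lambda(v-1)/(k(k-1))=3(v-1)/12=(v-1)/4$ and the period $24$ is the order of the PLDF. The six base-block families $F_{1,i},\ldots,F_{6,i}$ would take the form in \eqref{eqn:general form}, namely
\[
F_{r,i}=\{0,\ b_{r,1}t+i+a_r,\ b_{r,2}t+2i+b_r,\ b_{r,3}t+3i+c_r\},
\]
with the triples $(b_{r,1},b_{r,2},b_{r,3})$ read off directly from the blocks $B_1,\ldots,B_6$ of the $(24,4,3)$-PLDF (so $(b_{1,1},b_{1,2},b_{1,3})=(0,8,0)$, etc.), and $i$ ranging over $1\leq i\leq t-2$. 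The key structural guarantee — that the coefficients of $t$ partition the difference spectrum correctly in layers — is exactly what Lemma~\ref{lem:24-4-3} provides; the small shift parameters $a_r,b_r,c_r$ are then chosen (by computer search, as for $\lambda=2$) to separate the finitely many arithmetic-progression intervals of differences so that they tile $\{1,2,\ldots,(v-1)/2\}$ with the appropriate multiplicities, leaving a short list of $12+x$ or so sporadic blocks to fill the remaining gaps, again found via the Kramer--Mesner search described in Lemma~\ref{lem:m=8(mod9)}.

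The main obstacle, as the authors explicitly flag for $\lambda=2$, is not conceptual but is the interval-disjointness bookkeeping: after fixing the PLDF coefficients one must verify that there exist shift vectors $(a_r,b_r,c_r)$ making all the generated intervals pairwise disjoint and that the residual gaps can be covered by a constant number of sporadic blocks uniformly across the residue classes $x$. Since the PLDF already certifies correctness at the level of $\Z_{24}$ (i.e.\ modulo the coefficient of $t$), the remaining work reduces to a finite verification per value of $x$, which is delegated to a table of parameters (analogous to Table~\ref{tab:5.3}) and to the verification script. I would therefore present the proof as: state $v=24t+6x+1$ with $x\in\{\text{appropriate residues}\}$, tabulate $(a_r,b_r,c_r)$, list the sporadic blocks in an appendix, note that the $t$-coefficients form the $(24,4,3)$-PLDF of Lemma~\ref{lem:24-4-3}, and invoke the same disjointness-checking argument used for $\lambda=2$, with full numerical verification available at \cite{Checking}.
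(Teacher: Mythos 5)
Your overall strategy coincides with the paper's: anchor the generic construction on the $(24,4,3)$-PLDF of Lemma~\ref{lem:24-4-3}, build six parametrized families of the form \eqref{eqn:general form} whose $t$-coefficients are read off from the PLDF blocks, determine the small shifts $a_r,b_r,c_r$ and a bounded number of sporadic blocks by computer search as in Lemma~\ref{thm:(v,4,2)PDF}, and dispose of small orders by explicit appendix constructions. However, the arithmetic decomposition you commit to for the generic case is wrong, and it fails in a way that matters.

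You write $v=24t+6x+1$, carrying over the increment $6x$ from the $\lambda=2$ case. But the ``$6$'' in $v=36t+6x+1$ comes from the block-count divisor: for $\lambda=2$ the number of base blocks is $(v-1)/6$, so stepping $v$ by $6$ changes the block count by exactly $1$. For $\lambda=3$ the block count is $(v-1)/4$ --- as you yourself compute --- so the increment must be $4x$: the paper takes $v=24t+4x+1$ with $x\in\{-2,-1,1,2\}$, giving $6t+x$ base blocks and covering $v\equiv 5,9,17,21\pmod{24}$. With your $v=24t+6x+1$ two things go wrong: first, the block count $(v-1)/4=6t+3x/2$ is not an integer unless $x$ is even; second, the values of $v$ you can reach that satisfy $v\equiv 1\pmod 4$ are then only $v\equiv 1,13\pmod{24}$, i.e.\ $v\equiv 1\pmod{12}$. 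Those are precisely the cases that need no new construction at all --- the paper settles them in one line by taking three copies of a $(v,4,1)$-PDF from Lemma~\ref{thm:4-PDF} (with appendix constructions for $v\in\{25,37\}$, where no $(v,4,1)$-PDF exists). Meanwhile the classes $v\equiv 5,9\pmod{12}$, which are the entire point of the $\lambda=3$ lemma, are unreachable by your parametrization, so as written your generic case proves nothing beyond what the $\lambda=1$ result already gives and leaves the genuinely new cases uncovered. The repair is mechanical --- replace $6x$ by $4x$, restrict the PLDF machinery to $v\equiv 5,9\pmod{12}$, and handle $v\equiv 1\pmod{12}$ by triplication --- but as stated the decomposition is a step that fails.
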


\begin{proof}
For $v\equiv 1 \pmod{12}$ and $v\neq 25,37$, taking three copies of a $(v,4,1)$-PDF from Lemma \ref{thm:4-PDF} yields a $(v,4,3)$-PDF. For $v\in\{25,37\}\cup \{17, 21, 29, 33\}$, see Appendix \ref{Appendix:small-PDF-3}.

For $v \equiv 5, 9 \pmod{12}$ and $v\geq 41$, let $v = 24t + 4x + 1$, where $t \geq 2$ and $x \in \{-2, -1, 1, 2\}$. A $(v,4,3)$-PDF consists of $6t+x$ base blocks. The first $6t - 12$ base blocks are listed below:
\begin{center}		
		\begin{tabular}{llll}
			$F_{1,i}:=\{0$, & $i+a_1$, & $8t+2i+b_1$, & $3i+c_1\}$, \\
			$F_{2,i}:=\{0$, & $8t+i+a_2$, & $10t+2i+b_2$, & $3i+c_2\}$, \\
			$F_{3,i}:=\{0$, & $10t+i+a_3$, & $4t+2i+b_3$, & $3i+c_3\}$, \\
			$F_{4,i}:=\{0$, & $9t+i+a_4$, & $10t+2i+b_4$, & $3t+3i+c_4\}$, \\
			$F_{5,i}:=\{0$, & $11t+i+a_5$, & $2i+b_5$, & $3t+3i+c_5\}$, \\
			$F_{6,i}:=\{0$, & $11t+i+a_6$, & $6t+2i+b_6$, & $3t+3i+c_6\}$, \\
		\end{tabular}
	\end{center}
where $1\leq i\leq t-2$, and $a_j,b_j,c_j$ for $1 \leq j \leq 6$ are listed in Table \ref{tab:5.4} according to the value of $x$. The remaining $12+x$ base blocks are given in Appendix \ref{Appendix:big-PDF-3}. Note that the coefficients of $t$ in the first $6t-12$ base blocks form a $(24,4,3)$-PLDF as given in Lemma \ref{lem:24-4-3}.	
\end{proof}

\begin{table}[htpb]\centering
		\begin{tabular}{|c|ccc|ccc|ccc|ccc|ccc|ccc|}\hline
			$x$ & $a_1$ & $b_1$ & $c_1$ & $a_2$ & $b_2$ & $c_2$ & $a_3$ & $b_3$ & $c_3$ & $a_4$ & $b_4$ & $c_4$ & $a_5$ & $b_5$ & $c_5$ & $a_6$ & $b_6$ & $c_6$\\
			\hline
			$-2$ & $1$ & $-1$ & $0$ & $-2$ & $-3$ & $2$ & $-1$ & $0$ & $1$ & $-2$ & $-2$ & $1$ & $-2$ & $2$ & $-1$ & $-3$ & $0$ & $0$\\
			$-1$ & $0$ & $0$ & $0$ & $-1$ & $0$ & $1$ & $1$ & $0$ & $2$ & $0$ & $1$ & $1$ & $-1$ & $1$ & $2$ & $0$ & $1$ & $0$\\
			$1$ & $0$ & $3$ & $0$ & $1$ & $2$ & $2$ & $3$ & $0$ & $1$ & $1$ & $3$ & $2$ & $1$ & $1$ & $1$ & $2$ & $1$ & $0$\\
			$2$ & $0$ & $4$ & $0$ & $3$ & $3$ & $2$ & $4$ & $1$ & $1$ & $3$ & $4$ & $3$ & $5$ & $1$ & $1$ & $3$ & $4$ & $2$\\ \hline
		\end{tabular}
		\caption{Parameters $a_j, b_j, c_j$ in Lemma \ref{thm:(v,4,3)PDF}}\label{tab:5.4}
	\end{table}

\subsection{$\lambda=6$}\label{sec:multi-3}

\begin{Lemma}\label{lem:12-4-6}
There exists a $(12,4,6)$-PLDF.
\end{Lemma}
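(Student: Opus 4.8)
The plan is to exhibit six explicit ordered $4$-multi-subsets of $\Z_{12}$ and verify the defining identity directly, mirroring the treatment of the $(36,4,2)$- and $(24,4,3)$-PLDFs in Lemmas~\ref{lem:36-4-2} and~\ref{lem:24-4-3}. First I would pin down the number of blocks by a coefficient count: a single block $B=(0,b_1,b_2,b_3)$ contributes to $\Delta^{*+}B$ exactly one difference of coefficient $1$ from each of the three gap-$1$ pairs $(0,1),(1,2),(2,3)$, two differences of coefficient $\tfrac12$ from each of the two gap-$2$ pairs $(0,2),(1,3)$, and three differences of coefficient $\tfrac13$ from the single gap-$3$ pair $(0,3)$, for a total coefficient mass of $\binom{4}{2}=6$. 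Since the target $\underline{6}\sum_{g=0}^{5}g$ has total mass $6\cdot 6=36$, a $(12,4,6)$-PLDF must consist of exactly $6$ base blocks, consistent with the count $\lambda v/(k(k-1))=6\cdot 12/12$ familiar from Proposition~\ref{LDF-nec-1}.

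Next I would translate the requirement $\sum_{j=1}^{6}\Delta^{*+}B_j=\underline{6}\sum_{g=0}^{5}g$ into two combinatorial constraints on the blocks $B_j=(0,b_{j,1},b_{j,2},b_{j,3})$: first, every difference produced must land in the positive half $\{0,1,2,3,4,5\}$, since the residues $6,7,\dots,11$ must receive coefficient $0$ while all coefficients are strictly positive; and second, each of the residues $0,1,2,3,4,5$ must be covered with total coefficient exactly $6$. Following the convention of the preceding constructions I fix $b_{j,0}=0$ throughout; since $\Delta^{*+}$ is not translation invariant this is a genuine normalization rather than a loss of generality, but it is exactly the form~\eqref{eqn:general form} needed for the PDF application. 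With $b_{j,0}=0$ fixed, the search reduces to choosing the triples $(b_{j,1},b_{j,2},b_{j,3})\in\Z_{12}^3$, a small finite space amenable to a direct computer search of the kind used elsewhere in the paper.

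Before running a brute-force search I would first test whether a $(12,4,2)$- or $(12,4,3)$-PLDF exists, because $3$ copies of the former or $2$ copies of the latter would immediately yield the desired $(12,4,6)$-PLDF. I expect these smaller families \emph{not} to exist, exactly as the $(72,4,1)$-PLDF fails to exist, because with so few blocks the coefficient-$\tfrac13$ contributions from the gap-$3$ pairs cannot be balanced into integer totals. This fractional balancing is the crux of the problem: the main obstacle is not finding \emph{some} blocks meeting the positive-half constraint, but arranging that the halves and thirds combine, residue by residue, into the single integer value $6$ while every difference stays in $\{0,\dots,5\}$. For $v=12$ and six blocks the feasible region is nonetheless tiny, so once the constraints are set up a solution should be located quickly.

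Finally, having produced candidate blocks $B_1,\dots,B_6$, the verification is a routine finite computation in $\mathbb{Q}[\Z_{12}]$: expand each $\Delta^{*+}B_j$ according to its definition, collect the eighteen unit-coefficient terms, twenty-four half-coefficient terms, and eighteen third-coefficient terms, and confirm that their sum equals $\underline{6}\sum_{g=0}^{5}g$. This check can be done by hand or, as the paper does for the $\lambda=2,3,6$ cases, confirmed with the accompanying Python code.
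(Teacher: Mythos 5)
Your framework is sound: the block count (six blocks, from total coefficient mass $6$ per block against target mass $36$), the two constraints (all differences land in $\{0,\dots,5\}$, each residue there receives total coefficient exactly $6$), and the accounting of $18$ unit-, $24$ half-, and $18$ third-coefficient terms are all correct and consistent with how the paper treats Lemmas~\ref{lem:36-4-2} and~\ref{lem:24-4-3}. But the proposal has a genuine gap: it never exhibits the six base blocks. The lemma is a pure existence statement whose only known proof is an explicit witness, and your argument ends with ``once the constraints are set up a solution should be located quickly'' --- a promise of a search, not its outcome. Nothing in the proposal rules out the (a priori conceivable) alternative that the feasible region you describe is empty, the way it is for a $(72,4,1)$-PLDF; your heuristic about fractional balancing is not a proof in either direction, so the existence claim remains unestablished. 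The paper's proof consists precisely of the step you omit: it writes down
$B_{1}=(0,2,3,0)$, $B_{2}=(0,5,2,0)$, $B_{3}=(0,5,4,0)$, $B_{4}=(0,0,4,3)$, $B_{5}=(0,5,0,3)$, $B_{6}=(0,5,2,3)$,
and checks $\sum_{j=1}^{6}\Delta^{*+}B_{j}=\underline{6}\sum_{g=0}^{5}g$.

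A secondary, smaller point: your proposed shortcut via a $(12,4,2)$- or $(12,4,3)$-PLDF is harmless as a search heuristic, but as written it adds another unresolved existence question rather than removing one; the claim that such families do not exist is also only asserted, not shown. To turn your proposal into a proof you must either run the finite search and record the resulting blocks (after which the verification you describe is indeed routine), or supply a non-constructive existence argument, which the paper does not have and which seems out of reach for a statement of this type.
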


\begin{proof}
Take the following ordered 4-multi-subsets of $\Z_{12}$:
	\begin{center}
		\begin{tabular}{lll}
			$B_{1}=(0,2,3,0)$,&
			$B_{2}=(0,5,2,0)$,&
			$B_{3}=(0,5,4,0)$,\\
			$B_{4}=(0,0,4,3)$,&
			$B_{5}=(0,5,0,3)$,&
			$B_{6}=(0,5,2,3)$.\\
		\end{tabular}
	\end{center}
One can check that $\sum_{j=1}^{6} \Delta^{*+} B_{j}=\underline{6}\sum_{g=0}^5 g$. Then $B_{1},B_{2},\dots,B_{6}$ form a $(12,4,6)$-PLDF.
\end{proof}

\begin{Lemma}\label{thm:(v,4,6)PDF}
There exists a $(v,4,6)$-PDF for any $v \equiv 1 \pmod{2}$ and $v\geq 13$.
\end{Lemma}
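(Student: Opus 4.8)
The plan is to mirror the strategy already used for $\lambda=2$ and $\lambda=3$: recover most residue classes by superposing copies of previously constructed PDFs, and treat the two genuinely new residue classes by a direct construction driven by the $(12,4,6)$-PLDF of Lemma~\ref{lem:12-4-6}. Since $6(v-1)\equiv 0\pmod{12}$ holds for every odd $v$, the arithmetic condition imposes nothing beyond $v$ being odd, so the task is purely to exhibit a construction for each odd $v\geq 13$.

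First I would dispose of the residues reachable from smaller multiplicities. For $v\equiv 1\pmod 4$ (equivalently $v\equiv 1,5,9\pmod{12}$), Lemma~\ref{thm:(v,4,3)PDF} supplies a $(v,4,3)$-PDF for every $v\geq 13$, and the multiset union of two copies is a $(v,4,6)$-PDF. For $v\equiv 1\pmod 6$ (equivalently $v\equiv 1,7\pmod{12}$), Lemma~\ref{thm:(v,4,2)PDF} supplies a $(v,4,2)$-PDF for $v\geq 13$, and three copies give a $(v,4,6)$-PDF. Together these settle all odd $v\geq 13$ with $v\equiv 1,5,7,9\pmod{12}$, so only $v\equiv 3,11\pmod{12}$ remain.

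For the two leftover classes I would invoke the layered-difference-family machinery with modulus matching the multiplicity-$6$ setting (note $72/6=12$, which is why a $(12,4,6)$-PLDF is the right object, and the coarse step is $12/6=2$). Writing $v=12t+2x+1$ with $x\in\{-1,1\}$, one checks that $x=1$ yields $v\equiv 3\pmod{12}$ and $x=-1$ yields $v\equiv 11\pmod{12}$, so as $t$ varies each target residue is swept out. Such a PDF has $6(v-1)/12=6t+x$ base blocks. I would take the first $6t-12$ of them in the form $F_{r,i}=\{0,\,b_{r,1}t+i+a_r,\,b_{r,2}t+2i+b_r,\,b_{r,3}t+3i+c_r\}$ for $1\leq r\leq 6$ and $1\leq i\leq t-2$, where $(0,b_{r,1},b_{r,2},b_{r,3})$ ranges over the six base blocks of Lemma~\ref{lem:12-4-6}. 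By the defining property of the PLDF, the coefficients of $t$ in the positive differences of these blocks cover $\{1,\dots,(v-1)/2\}$ correctly at the coarse ($t$-multiple) scale with multiplicity $6$; the small integer shifts $a_r,b_r,c_r$ are then chosen so that the induced $t$-scale intervals become pairwise disjoint and interlock without gaps. The residual $12+x$ positions are filled by sporadic base blocks located via computer search, exactly as in Lemmas~\ref{lem:m=8(mod9)} and \ref{thm:(v,4,2)PDF}, and the finitely many small orders below the threshold where $t\geq 2$ produces nonempty families (such as $v=15,23$) are handled directly in an appendix.

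The main obstacle is the same as in the earlier multiplicity cases: the PLDF guarantees only the structural, coarse-scale tiling, so one must still verify that admissible shift vectors $(a_r,b_r,c_r)$ exist making the bulk intervals disjoint, and that the remaining $12+x$ sporadic blocks can actually be found to plug the leftover gaps. The first part is a bookkeeping check on interval alignment, but the existence of a sporadic completion is not guaranteed a priori; it is established by an explicit (computer-aided) search, with the latitude to re-adjust the shifts or the range of $i$ whenever a particular attempt fails.
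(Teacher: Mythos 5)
Your proposal is correct and follows essentially the same route as the paper: two copies of a $(v,4,3)$-PDF for $v\equiv 1\pmod 4$, three copies of a $(v,4,2)$-PDF for the remaining $v\equiv 7\pmod{12}$, and for $v\equiv 3,11\pmod{12}$ the direct construction $v=12t+2x+1$, $x\in\{-1,1\}$, whose first $6t-12$ blocks are generated from the $(12,4,6)$-PLDF of Lemma \ref{lem:12-4-6} with computer-found shifts and $12+x$ sporadic blocks. The only cosmetic difference is that the paper handles just $v=15$ ad hoc (its general construction with $t=2$, $x=-1$ already covers $v=23$, where all $11$ blocks are sporadic), whereas you set aside both $15$ and $23$ as small cases.
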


\begin{proof}
For $v\equiv 1 \pmod{4}$ and $v\geq 13$, taking two copies of a $(v,4,3)$-PDF from Lemma \ref{thm:(v,4,3)PDF} yields a $(v,4,6)$-PDF. For $v\equiv 7 \pmod{12}$ and $v\geq 19$, taking three copies of a $(v,4,2)$-PDF from Lemma \ref{thm:(v,4,2)PDF} yields a $(v,4,6)$-PDF. For $v=15$, a $(15,4,6)$-PDF is given below:
\begin{center}
\begin{tabular}{lllllllll}
$\{0,1,4,6\}$,& $\{0,2,5,7\}$,& $\{0,2,6,7\}$,& $\{0,2,6,7\}$,& $\{0,2,6,7\}$, & $\{0,3,6,7\}$,& $\{0,3,6,7\}$. \\
\end{tabular}
\end{center}

For $v \equiv 3, 11 \pmod{12}$ and $v\geq 23$, let $v = 12t + 2x + 1$, where $t \geq 2$ and $x \in \{-1, 1\}$. A $(v,4,2)$-PDF consists of $6t+x$ base blocks. The first $6t-12$ base blocks are listed below:
	\begin{center}		
		\begin{tabular}{llll}
			$F_{1,i}:=\{0$, & $2t+i+a_1$, & $3t+2i+b_1$, & $3i+c_1\}$, \\
			$F_{2,i}:=\{0$, & $5t+i+a_2$, & $2t+2i+b_2$, & $3i+c_2\}$, \\
			$F_{3,i}:=\{0$, & $5t+i+a_3$, & $4t+2i+b_3$, & $3i+c_3\}$, \\
			$F_{4,i}:=\{0$, & $i+a_4$, & $4t+2i+b_4$, & $3t+3i+c_4\}$, \\
			$F_{5,i}:=\{0$, & $5t+i+a_5$, & $2i+b_5$, & $3t+3i+c_5\}$, \\
			$F_{6,i}:=\{0$, & $5t+i+a_6$, & $2t+2i+b_6$, & $3t+3i+c_6\}$, \\
		\end{tabular}
	\end{center}
where $1\leq i\leq t-2$, and $a_j,b_j,c_j$ for $1 \leq j \leq 6$ are listed in Table \ref{tab:5.7} according to the value of $x$. The remaining $12+x$ base blocks are given in Appendix \ref{Appendix:big-PDF-6}. Note that the coefficients of $t$ in the first $6t-12$ base blocks form a $(12,4,6)$-PLDF as given in Lemma \ref{lem:12-4-6}.
\end{proof}

\subsection{General $\lambda$}\label{sec:multi-4}

We are now in a position to prove Theorem \ref{thm:(v,4,lambda)PDF}.

\begin{proof}[\bf{Proof of Theorem \ref{thm:(v,4,lambda)PDF}}]
A $(v,4,\lambda)$-PDF requires $\lambda(v-1)/12$ base blocks, implying $\lambda(v-1)\equiv 0\pmod{12}$. By Lemma \ref{lem:k=4-small orders}, there is no $(v,4,1)$-PDF for $v\in\{25,37\}$. For a $(v,4,\lambda)$-PDF, each difference in $[1,(v-1)/2]$ appears exactly $\lambda$ times. Since the largest difference $(v-1)/2$ can occur at most once per base block, the number of base blocks must satisfy $\lambda(v-1)/12 \geq \lambda$, which simplifies to $v \geq 13$. These establish the necessary conditions.

To prove sufficiency, assume that $\lambda(v-1)\equiv 0\pmod{12}$, $v\equiv 1\pmod{2}$, $v\geq 13$ and $(v,\lambda)\not\in\{(25,1),(37,1)\}$.

When $\lambda \equiv 1,5 \pmod{6}$, we have $v \equiv 1 \pmod{12}$. By Lemma \ref{thm:4-PDF}, a $(v,4,1)$-PDF exists for any $v \equiv 1 \pmod{12}$ and $v\not\in\{25,37\}$. Then $\lambda$ copies of this PDF form a $(v,4,\lambda)$-PDF. For $v=25$ and $37$, a $(v,4,\lambda)$-PDF with $\lambda>1$ can be obtained by suitable combinations of the $(v,4,2)$-PDF and $(v,4,3)$-PDF provided by Lemmas \ref{thm:(v,4,2)PDF} and \ref{thm:(v,4,3)PDF}, respectively.

When $\lambda \equiv 2,4 \pmod{6}$, since $v\equiv 1\pmod{2}$, we have $v \equiv 1 \pmod{6}$. By Lemma \ref{thm:(v,4,2)PDF}, a $(v,4,2)$-PDF exists for any $v \equiv 1 \pmod{6}$ and $v\geq 13$. Then $\lambda/2$ copies of this PDF form a $(v,4,\lambda)$-PDF.

When $\lambda \equiv 3 \pmod{6}$, we have $v \equiv 1 \pmod{4}$. By Lemma \ref{thm:(v,4,3)PDF}, a $(v,4,3)$-PDF exists for any $v \equiv 1 \pmod{4}$ and $v\geq 13$. Then $\lambda/3$ copies of this PDF form a $(v,4,\lambda)$-PDF.

When $\lambda \equiv 0 \pmod{6}$, we have $v \equiv 1 \pmod{2}$. By Lemma \ref{thm:(v,4,6)PDF}, a $(v,4,6)$-PDF exists for any $v \equiv 1 \pmod{2}$ and $v\geq 13$. Then $\lambda/6$ copies of this PDF form a $(v,4,\lambda)$-PDF.
\end{proof}

\begin{table}[!t]\centering
\begin{tabular}{|c|ccc|ccc|ccc|ccc|ccc|ccc|}
			\hline
			$x$ & $a_1$ & $b_1$ & $c_1$ & $a_2$ & $b_2$ & $c_2$ & $a_3$ & $b_3$ & $c_3$ & $a_4$ & $b_4$ & $c_4$ & $a_5$ & $b_5$ & $c_5$ & $a_6$ & $b_6$ & $c_6$\\
			\hline
			$-1$ & $1$ & $1$ & $1$ & $0$ & $1$ & $0$ & $0$ & $0$ & $2$ & $1$ & $1$ & $2$ & $1$ & $2$ & $0$ & $0$ & $2$ & $1$\\
			$1$ & $2$ & $0$ & $0$ & $1$ & $2$ & $1$ & $1$ & $2$ & $2$ & $1$ & $1$ & $2$ & $1$ & $2$ & $0$ & $2$ & $1$ & $1$\\\hline
\end{tabular}
\caption{Parameters $a_j, b_j, c_j$ in Lemma \ref{thm:(v,4,6)PDF}}\label{tab:5.7}
\end{table}

By Theorem \ref{thm:(v,4,lambda)PDF}, we can give a quick proof for Theorem \ref{thm:4-CDF-lambda}.

\begin{proof}[\bf{Proof of Theorem \ref{thm:4-CDF-lambda}}]
Clearly a $(v,4,\lambda)$-CDF exists only if $\lambda(v-1)\equiv 0\pmod{12}$. By \cite[Theorem 4]{zfw2022}, there is no $(25,4,1)$-CDF but there exists a $(37,4,1)$-CDF. Assume that $\lambda(v-1) \equiv 0 \pmod{12}$, $v \geq 4$ and $(v,\lambda)\neq (25,1)$. A value $v$ satisfying these conditions is called admissible.

The cases for small $v$ are handled directly: for $v=4$ and $\lambda \equiv 0 \pmod{4}$, a $(4,4,\lambda)$-CDF is constructed by $\lambda/4$ copies of $\{0,1,2,3\}$; for $v=5$ and $\lambda \equiv 0 \pmod{3}$, a $(5,4,\lambda)$-CDF is constructed by $\lambda/3$ copies of $\{0,1,2,4\}$; for $v=7$ and $\lambda \equiv 0 \pmod{2}$, a $(7,4,\lambda)$-CDF is constructed by $\lambda/2$ copies of $\{0,1,2,4\}$; for $v=9$ and $\lambda \equiv 0 \pmod{3}$, a $(9,4,\lambda)$-CDF is constructed by $\lambda/3$ copies of $\{0,1,2,5\}$ and $\{0,1,3,7\}$.

For larger $v$, the proof relies on Theorem \ref{thm:(v,4,lambda)PDF}. When $\lambda \equiv 1 \pmod{2}$ or $\lambda \equiv 2, 10 \pmod{12}$, a $(v,4,\lambda)$-CDF exists for all admissible $v \geq 13$. For other values of $\lambda$ modulo 12, namely $\lambda \equiv 4,8 \pmod{12}$, $\lambda \equiv 6 \pmod{12}$, and $\lambda \equiv 0 \pmod{12}$, a \((2v-1,4,\lambda/2)\)-PDF exists for every admissible $v \geq 7$. In each of these cases, the base blocks of this PDF cover every element in \(\{1,2,\dots,v-1\}\) exactly \(\lambda\) times when only considering their positive differences, thereby forming the required \((v,4,\lambda)\)-CDF. This completes the proof.
\end{proof}

\section{Applications}\label{sec:application}

\subsection{Additive sequences of permutations}

\begin{Definition}
Let $m$ and $n$ be positive integers. Let $X=\{x_1,\ldots,x_n\}$ be a set of distinct integers. For $j=1,\ldots,m$, let $X^{(j)}=(x_1^{(j)}, \ldots, x_n^{(j)})$ be a permutation of $X$. The sequence $(X^{(1)}, X^{(2)}, \ldots, X^{(m)})$ is called an \emph{additive sequence of permutations} of degree $n$, denoted by an ASP$(m,n)$, if for every subsequence of consecutive permutations $ (X^{(j_1)}, X^{(j_1+1)}, \ldots, X^{(j_2)}) $ with $1 \leq j_1 \leq j_2 \leq m$, the vector-sum $X^{(j_1)}+X^{(j_1+1)}+\cdots+X^{(j_2)}$ is also a permutation of $X$. The set $X$ is called the \emph{basis} of the additive sequence.
\end{Definition}

For example, the sequence $(X^{(1)}, X^{(2)})$, where
\begin{align*}
\begin{tabular}{ccccccccccc}
$X^{(1)}=(-r$, & $-r+1$, & $\ldots$, & $-1$, & $0$, & $1$, & $\ldots$, & $r-1$, & $r)$, and \\
$X^{(2)}=(0$,  & $1$, & $\ldots$, & $r-1$, & $r$, & $-r$, & $\ldots$, & $-2$, & $-1)$,
\end{tabular}
\end{align*}
forms an ASP$(2,2r+1)$ for any nonnegative integer $r$ with the basis $[-r,r]$ (see \cite{kl78}).

\begin{Theorem}\label{thm:ASP-2} {\rm \cite{kl78}}
There exists an ASP$(2,n)$ with the basis $[-(n-1)/2,(n-1)/2]$ for any $n\equiv 1\pmod{2}$.
\end{Theorem}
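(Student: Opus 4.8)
The plan is to exhibit the explicit two-permutation sequence displayed immediately before the statement and verify directly that it meets the defining conditions of an ASP$(2,n)$. Write $n=2r+1$ with $r\geq 0$, so the prescribed basis is $X=[-r,r]$. I would take $X^{(1)}=(-r,-r+1,\ldots,r)$, the elements of $X$ in increasing order, and $X^{(2)}=(0,1,\ldots,r,-r,\ldots,-1)$ exactly as in the example preceding the theorem.

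Because $m=2$, the only consecutive subsequences whose vector-sums must be tested are $(X^{(1)})$, $(X^{(2)})$, and $(X^{(1)},X^{(2)})$. The first two are permutations of $X$ by construction, so the sole substantive point is that the coordinatewise sum $X^{(1)}+X^{(2)}$ is again a permutation of $X$. First I would record closed forms by position $j\in\{1,\ldots,2r+1\}$: one has $x^{(1)}_j=j-1-r$ throughout, while $x^{(2)}_j=j-1$ for $1\leq j\leq r+1$ and $x^{(2)}_j=j-2r-2$ for $r+2\leq j\leq 2r+1$.

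Adding these, the sum on the block $1\leq j\leq r+1$ equals $2(j-1)-r$ and sweeps out $-r,-r+2,\ldots,r$, while on the block $r+2\leq j\leq 2r+1$ the sum equals $2j-3r-3$ and sweeps out $-r+1,-r+3,\ldots,r-1$. These two arithmetic progressions of common difference $2$ lie in opposite parity classes, each stays inside $[-r,r]$, and their $r+1$ and $r$ terms together list every integer of $[-r,r]$ exactly once; hence $X^{(1)}+X^{(2)}$ is a bijection onto $X$, which finishes the argument.

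The only care required is the elementary bookkeeping of these two blocks: tracking the parities and the endpoints to confirm that the step-$2$ progressions tile $[-r,r]$ with neither gap nor overlap. There is no deeper obstacle here, since this is the classical construction recorded in \cite{kl78} and the verification is entirely direct.
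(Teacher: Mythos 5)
Your proposal is correct and coincides with the paper's treatment: the paper establishes this theorem precisely by exhibiting the same two-permutation construction from \cite{kl78} in the example immediately preceding the statement. Your explicit position-by-position verification that the coordinatewise sum tiles $[-r,r]$ by two interleaved parity classes is accurate and simply fills in the routine checking that the paper leaves to the citation.
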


Let $(X^{(1)}, X^{(2)})$ be an ASP$(2,n)$ with the basis $X=\{x_1,\ldots,x_n\}$. Since $X^{(1)}$, $X^{(2)}$ and $X^{(1)}+X^{(2)}$ are all permutations of $X$, we have $\sum_{\ell=1}^n x_{\ell}=0$. This fact leads to the following necessary condition for the existence of an ASP$(m,n)$ having an integer interval as its basis.

\begin{Proposition}\emph{\cite[Theorem 2]{kt1984}}
If the basis $X$ of an ASP$(m,n)$ is an integer interval, then $n$ is odd and $X=[-(n-1)/2,(n-1)/2]$.
\end{Proposition}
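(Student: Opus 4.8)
The plan is to reduce the proposition to a single numerical fact, namely that the element-sum of the basis $X$ is zero; once this is in hand, the interval hypothesis determines $X$ completely by an elementary computation. Throughout I assume $m\geq 2$ (the additive condition is vacuous when $m=1$, so the notation ASP$(m,n)$ is understood to involve at least two permutations; otherwise $X$ could be any integer interval and the statement would fail).

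First I would isolate the structural observation already recorded in the text preceding the proposition. The first two permutations $X^{(1)}$ and $X^{(2)}$ of an ASP$(m,n)$ themselves form an ASP$(2,n)$, since the defining property with $j_1=1$, $j_2=2$ guarantees that $X^{(1)}+X^{(2)}$ is again a permutation of $X$. Writing $S:=\sum_{x\in X}x$ for the common coordinate-sum shared by every permutation of $X$, and summing the $n$ coordinates of the permutation $X^{(1)}+X^{(2)}$, I get
$$S=\sum_{\ell=1}^{n}\bigl(x_\ell^{(1)}+x_\ell^{(2)}\bigr)=\sum_{\ell=1}^{n}x_\ell^{(1)}+\sum_{\ell=1}^{n}x_\ell^{(2)}=2S,$$
and hence $S=0$.

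Next I would feed in the hypothesis that $X$ is an integer interval, say $X=[a,b]$ with $b-a+1=n$. Its element-sum is the arithmetic-series value $\sum_{j=a}^{b}j=\tfrac{(a+b)n}{2}$. Setting this equal to $S=0$ and using $n\geq 1$ forces $a+b=0$, i.e.\ $b=-a$. Consequently $n=b-a+1=-2a+1$ is odd, and solving gives $a=-(n-1)/2$ and $b=(n-1)/2$, so $X=[-(n-1)/2,(n-1)/2]$, which is exactly the asserted conclusion.

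Both steps are short, so there is no genuine obstacle to overcome; the only points meriting a word of care are the reduction from a general $m$ to the two-permutation identity (together with the tacit convention $m\geq 2$) and the observation that $a+b=0$ automatically yields an odd number of elements. The remainder is just the computation of $S$ in two different ways.
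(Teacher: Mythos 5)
Your proof is correct and follows essentially the same route as the paper: the text immediately preceding the proposition derives $\sum_{\ell=1}^n x_\ell = 0$ from the identity $S=2S$ applied to $X^{(1)}+X^{(2)}$, and then (as you do) notes that this zero-sum condition forces an interval basis to be symmetric about the origin, giving $n$ odd and $X=[-(n-1)/2,(n-1)/2]$. Your explicit remark that the argument needs $m\geq 2$ is a sensible clarification of a convention the paper leaves tacit, but it does not change the substance of the argument.
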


The existence of ASP$(3,n)$s remains an open problem. Wang and Chang \cite[Theorem 2.10]{wc2010} showed that an ASP$(3,2r+1)$ with the basis $[-r,r]$ exists for any integer $r$ with $2\leq r\leq 100$ except for $r=4,5$. Mathon demonstrated \cite[Theorem 10]{mathon} that the existence of an $(n,4,1)$-PDF implies the existence of an ASP$(3,n)$ with the basis $[-(n-1)/2,(n-2)/2]$. Together with Lemma \ref{thm:4-PDF}, these results imply the following fact.


\begin{Theorem}\label{thm:ASP}
There exists an ASP$(3,n)$ with the basis $[-(n-1)/2,(n-1)/2]$ for any $n\equiv 1\pmod{12}$.
\end{Theorem}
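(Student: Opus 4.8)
The plan is to reduce the construction of an $\mathrm{ASP}(3,n)$ to the existence of an $(n,4,1)$-PDF, which is exactly the object whose spectrum has just been pinned down in Lemma~\ref{thm:4-PDF}. Mathon's implication \cite[Theorem 10]{mathon} converts an $(n,4,1)$-PDF into an $\mathrm{ASP}(3,n)$ whose basis is an integer interval of length $n$; since that basis is an integer interval, the necessary condition of \cite{kt1984} forces it to be the symmetric interval $[-(n-1)/2,(n-1)/2]$, which is precisely the basis demanded in the statement. This both resolves any apparent endpoint discrepancy in how the basis is recorded and shows that the whole problem splits according to whether an $(n,4,1)$-PDF exists.

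First I would treat the generic case $n\equiv 1\pmod{12}$ with $n\notin\{25,37\}$. Here Lemma~\ref{thm:4-PDF} supplies an $(n,4,1)$-PDF directly, and feeding it into Mathon's construction produces the desired $\mathrm{ASP}(3,n)$ with the required basis. This already disposes of all but two of the admissible values of $n$, and it is the step into which all the genuine difficulty has been concentrated, since Lemma~\ref{thm:4-PDF} itself rests on Theorem~\ref{thm:(m,4,3)-PSDS}.

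It then remains to handle the two sporadic values $n=25$ and $n=37$, for which no $(n,4,1)$-PDF exists and so Mathon's bridge is unavailable. I would dispose of these by appealing to the computational results of Wang and Chang \cite[Theorem 2.10]{wc2010}: writing $n=2r+1$ gives $r=12$ for $n=25$ and $r=18$ for $n=37$, both of which lie in the range $2\leq r\leq 100$ and differ from the excluded values $r=4,5$. Hence Wang and Chang already provide an $\mathrm{ASP}(3,25)$ and an $\mathrm{ASP}(3,37)$ with basis $[-r,r]=[-(n-1)/2,(n-1)/2]$, completing the synthesis.

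The main obstacle does not lie in this theorem at all: it has been pushed entirely into Lemma~\ref{thm:4-PDF}, and once that classification is granted the present statement is a short assembly of two ingredients (the infinite family from the PDF spectrum and the two sporadic orders already settled by prior computation). The only genuine subtlety worth checking is that the basis emerging from Mathon's construction is the full symmetric interval rather than a shifted or truncated one, which is exactly why I would invoke the integer-interval necessary condition of \cite{kt1984} to pin it down unambiguously.
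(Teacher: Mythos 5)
Your proposal is correct and follows essentially the same route as the paper: the infinite family comes from combining Lemma \ref{thm:4-PDF} with Mathon's implication \cite[Theorem 10]{mathon}, while the two exceptional orders $n=25$ and $n=37$ (where no $(n,4,1)$-PDF exists) are covered by the Wang--Chang computational result \cite[Theorem 2.10]{wc2010} with $r=12$ and $r=18$. Your extra step of invoking the integer-interval condition of \cite{kt1984} to pin the basis down to $[-(n-1)/2,(n-1)/2]$ is a sensible way to resolve what is evidently a typo in the paper's quotation of Mathon's basis (the stated endpoint $(n-2)/2$ is not even an integer for odd $n$), and otherwise changes nothing.
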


For the early history and more results on additive sequences of permutations, see \cite{Abrham84,gms}.

\subsection{Perfect difference matrices}

Difference matrices are one of the central topics in combinatorial designs \cite{bjh,c07}. Perfect difference matrices, which are a special type of difference matrix, were introduced in \cite{glm} for the construction of radar arrays.

\begin{Definition}
Let $m$ be a positive integer and $n$ be a positive odd integer. An $m\times n$ matrix $D=(d_{ij})$ with entries from $\{0,1,\ldots,n-1\}$ is called a \emph{perfect difference matrix}, denoted by a PDM$(m,n)$, if for all $1\leq i_1<i_2\leq m$, the list of differences
$$
\{d_{i_1,j}-d_{i_2,j}: 1\leq j\leq n\}=[-(n-1)/2,(n-1)/2].
$$
Note that here, no arithmetic modulo $n$ is performed when calculating differences. A PDM$(m,n)$ is said to be \emph{homogeneous} when each of its rows contains all elements of $\{0,1,\ldots,n-1\}$.
\end{Definition}

\begin{Proposition}\label{lem:PDM-ASP} \emph{(\cite[Theorem 3.10]{gms}, \cite[Lemma 2.1]{cwf})}
For any positive odd integer $n$, a homogeneous PDM$(m,n)$ is equivalent to an ASP$(m,n)$ with the basis $[-(n-1)/2,(n-1)/2]$.
\end{Proposition}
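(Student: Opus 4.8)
The plan is to establish the equivalence by exhibiting an explicit mutually inverse pair of maps built from partial sums and a single additive shift by $s := (n-1)/2$. Throughout I identify a permutation of an $n$-element set with its length-$n$ vector, and I write $\mathbf{1}$ and $\mathbf{0}$ for the all-ones and all-zeros vectors in $\mathbb{Z}^n$. The basis $[-s,s]$ has exactly $n = 2s+1$ elements and is symmetric about $0$, a fact I will use repeatedly.

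First I would set up the forward map, from an $\mathrm{ASP}(m,n)$ with basis $[-s,s]$ to a homogeneous $\mathrm{PDM}(m,n)$. Given permutations $X^{(1)},\ldots,X^{(m)}$, form the partial sums $S^{(i)} := X^{(1)} + \cdots + X^{(i)}$ for $1 \leq i \leq m$ and define the $i$-th row of a matrix $D$ by $R_i := S^{(i)} + s\mathbf{1}$. The consecutive-sum axiom with $j_1 = 1$ says each $S^{(i)}$ is a permutation of $[-s,s]$, so each $R_i$ is a permutation of $\{0,1,\ldots,n-1\}$; this is exactly homogeneity. For the difference condition, note that for $i_1 < i_2$ we have $R_{i_1} - R_{i_2} = S^{(i_1)} - S^{(i_2)} = -(X^{(i_1+1)} + \cdots + X^{(i_2)})$, the negation of a consecutive sum, hence again a permutation of the symmetric set $[-s,s]$. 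Thus $D$ is a homogeneous $\mathrm{PDM}(m,n)$.

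Next I would give the inverse map and verify it lands in the ASP world. Starting from a homogeneous $\mathrm{PDM}(m,n)$ with rows $R_1,\ldots,R_m$, set $S^{(i)} := R_i - s\mathbf{1}$, adjoin the \emph{virtual row} $S^{(0)} := \mathbf{0}$, and define $X^{(1)} := S^{(1)}$ and $X^{(i)} := S^{(i)} - S^{(i-1)} = R_i - R_{i-1}$ for $2 \leq i \leq m$. Every consecutive sum telescopes, $X^{(j_1)} + \cdots + X^{(j_2)} = S^{(j_2)} - S^{(j_1-1)}$, and I would check the ASP axiom by two cases: for $j_1 \geq 2$ this difference equals $R_{j_2} - R_{j_1-1}$, a permutation of $[-s,s]$ by the $\mathrm{PDM}$ difference condition (and symmetry); for $j_1 = 1$ it equals $S^{(j_2)} = R_{j_2} - s\mathbf{1}$, a permutation of $[-s,s]$ precisely because $R_{j_2}$ is a permutation of $\{0,\ldots,n-1\}$, i.e.\ by homogeneity. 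A short telescoping check then shows the two maps are mutually inverse, so the correspondence is a bijection.

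The conceptual heart of the argument — and the step I would flag as the main obstacle — is recognizing the role of the adjoined zero row $S^{(0)} = \mathbf{0}$ and the fact that homogeneity of the $\mathrm{PDM}$ encodes exactly the otherwise-missing $j_1 = 1$ instances of the consecutive-sum axiom. The $\mathrm{PDM}$ condition as stated constrains only differences among the genuine rows $R_1,\ldots,R_m$; it is the requirement that each row be a full permutation of $\{0,\ldots,n-1\}$ that supplies the differences against the virtual zero row, thereby upgrading "block sums starting at index $\geq 2$" to the full ASP condition "block sums starting at any index". Once this correspondence is identified, the remaining verifications are routine, resting only on the symmetry of $[-s,s]$ and the telescoping of partial sums.
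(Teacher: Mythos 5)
Your proof is correct and complete. Note that the paper itself does not prove this proposition at all: it is quoted from the literature (Theorem 3.10 of the Ge--Miao--Sun paper and Lemma 2.1 of Chen--Wu--Fan), so there is no in-paper argument to compare against. Your partial-sum/shift bijection $R_i = X^{(1)}+\cdots+X^{(i)} + s\mathbf{1}$, with the virtual row $S^{(0)}=\mathbf{0}$ in the inverse direction, is the standard argument behind the cited results, and your verification is sound: the $j_1\geq 2$ instances of the consecutive-sum axiom correspond exactly to the PDM difference condition (using the symmetry of $[-s,s]$ under negation), while the $j_1=1$ instances correspond exactly to homogeneity. Your closing observation correctly identifies why homogeneity is the right hypothesis --- without it the ASP sums starting at index $1$ would have no counterpart on the matrix side --- which is precisely the content that makes the equivalence work.
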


By Proposition \ref{lem:PDM-ASP} and Theorem \ref{thm:ASP}, we have the following result.

\begin{Theorem}\label{thm:main-PDM}
A homogeneous PDM$(3,n)$ exists for any $n\equiv 1 \pmod{12}$.
\end{Theorem}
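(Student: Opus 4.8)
The plan is to obtain this statement as an immediate corollary of the equivalence recorded in Proposition~\ref{lem:PDM-ASP} together with the existence result of Theorem~\ref{thm:ASP}. First I would note that any $n\equiv 1\pmod{12}$ is odd, so Proposition~\ref{lem:PDM-ASP} applies and it suffices to exhibit an ASP$(3,n)$ whose basis is the integer interval $[-(n-1)/2,(n-1)/2]$. But Theorem~\ref{thm:ASP} supplies exactly such a sequence for every $n\equiv 1\pmod{12}$, whence the existence of a homogeneous PDM$(3,n)$ follows at once.

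To make the relevant direction of the equivalence concrete, let $(X^{(1)},X^{(2)},X^{(3)})$ be the ASP$(3,n)$ with basis $X=[-(n-1)/2,(n-1)/2]$ guaranteed by Theorem~\ref{thm:ASP}, and form the partial sums $P^{(1)}=X^{(1)}$, $P^{(2)}=X^{(1)}+X^{(2)}$ and $P^{(3)}=X^{(1)}+X^{(2)}+X^{(3)}$. By the additive property each $P^{(j)}$ is again a permutation of $X$, and for any $1\le i_1<i_2\le 3$ the coordinatewise difference $P^{(i_2)}-P^{(i_1)}=X^{(i_1+1)}+\cdots+X^{(i_2)}$ is a permutation of $X$; since $X=-X$, the same holds for $P^{(i_1)}-P^{(i_2)}$. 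Translating every entry by $(n-1)/2$ turns the three rows $P^{(1)},P^{(2)},P^{(3)}$ into rows with entries in $\{0,1,\ldots,n-1\}$, and this translation leaves all pairwise differences unchanged, so the resulting $3\times n$ matrix satisfies the PDM difference condition. It is homogeneous because each $P^{(j)}$ ranges over the full basis. This is precisely the content of Proposition~\ref{lem:PDM-ASP} spelled out for $m=3$.

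I do not anticipate any genuine obstacle at the level of this statement itself: it is a one-step consequence of results already in hand. The substantive content sits upstream, in Theorem~\ref{thm:ASP}, whose proof passes through Mathon's reduction from $(n,4,1)$-PDFs to ASP$(3,n)$s and hence relies on the existence of $(n,4,1)$-PDFs for all $n\equiv 1\pmod{12}$ furnished by Lemma~\ref{thm:4-PDF}. The only place where one must be slightly careful is at the orders $n=25,37$, where the underlying PDF does not exist; since Theorem~\ref{thm:ASP} is invoked here as a black box, any such subtlety is confined to its proof and does not affect the present deduction.
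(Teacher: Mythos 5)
Your proposal is correct and is exactly the paper's proof: the paper deduces Theorem~\ref{thm:main-PDM} in one line by combining the equivalence of Proposition~\ref{lem:PDM-ASP} with the existence result of Theorem~\ref{thm:ASP}, just as you do. Your explicit unpacking of the equivalence via partial sums, and your observation that the $n=25,37$ subtlety is absorbed inside the proof of Theorem~\ref{thm:ASP} (there it is covered by the Wang--Chang small-order ASPs rather than by Mathon's PDF reduction), are both accurate but not needed beyond what the paper already cites.
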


\subsection{Difference triangle sets}\label{sec:DTS}

Difference triangle sets are well-studied combinatorial structures \cite{s07} with significant applications in error-correcting codes, particularly in constructing convolutional self-orthogonal codes \cite{rb67} and designing sequences with good autocorrelation properties \cite{cc03}.

\begin{Definition}
Let $m$ and $k$ be positive integers. An $(m,k)$-\emph{difference triangle set} $($briefly DTS$)$ is a set $\mathcal{A}=\{A_1,\ldots,A_m\}$, where for each $i \in [m]$, $A_i=\{a_{i0},a_{i1},\ldots,a_{ik}\}$ with $a_{ij}$ an integer, such that all differences $a_{i\ell}-a_{ij}$ for $1 \leq i \leq m$ and $0 \leq j, \ell \leq k$ with $j \neq \ell$ are all distinct and nonzero. It is \emph{normalized} if for every $i \in [m]$, we have $0 = a_{i0} < a_{i1} < \cdots < a_{ik}$. The \emph{scope} of a normalized $(m,k)$-DTS $\mathcal{A}$ is defined as $s(\mathcal{A})=\max\{ a_{ik}: i\in [m]\}$.
\end{Definition}

Clearly for a normalized $(m,k)$-DTS $\mathcal{A}$, we have $s(\mathcal{A})\geq m\binom{k+1}{2}$. From a practical point of view, difference triangle sets with a smaller scope yield better results. Thus, the case where $s(\mathcal{A})$ equals $m\binom{k+1}{2}$ attracts significant interest \cite{sksk}. For example, a $(1,k)$-DTS with minimum scope is equivalent to a \emph{Golomb ruler} with $k+1$ marks, which have various applications ranging from radio astronomy to cryptography (cf. \cite{ad11}). Perfect difference families can produce difference triangle sets with the minimum possible scope. Currently, known existence results for minimum-scope $(m,k)$-DTSs are scarce when $k\geq 3$. An $(mk(k+1)+1,k+1,1)$-PDF naturally yields an $(m,k)$-DTS with scope $m\binom{k+1}{2}$. Therefore, by Lemma \ref{thm:4-PDF}, we have the following result.

\begin{Theorem}\label{thm:main-DTS}
There exists an $(m,3)$-DTS with minimum scope $6m$ for any $m\geq 1$ and $m\neq 2,3$.
\end{Theorem}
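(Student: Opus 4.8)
The plan is to derive Theorem~\ref{thm:main-DTS} as a direct corollary of the perfect difference family results, exploiting the stated connection that an $(mk(k+1)+1,k+1,1)$-PDF yields a normalized $(m,k)$-DTS of minimum scope $m\binom{k+1}{2}$. Specializing to $k=3$, I would use a $(6m+1,4,1)$-PDF to produce an $(m,3)$-DTS with scope $m\binom{4}{2}=6m$, which is the minimum possible scope $m\binom{k+1}{2}$ for a normalized $(m,3)$-DTS. Thus the entire statement reduces to determining for which $m$ a $(6m+1,4,1)$-PDF exists.

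First I would invoke Lemma~\ref{thm:4-PDF}, which states that a $(v,4,1)$-PDF exists if and only if $v\equiv 1\pmod{12}$ and $v\notin\{25,37\}$. Setting $v=6m+1$, the congruence $6m+1\equiv 1\pmod{12}$ is equivalent to $6m\equiv 0\pmod{12}$, i.e.\ $m$ even. This handles the even values of $m$, with the exceptions $v=25$ (giving $m=4$) and $v=37$ (giving $m=6$) needing separate treatment --- but note the theorem claims \emph{all} $m\geq 1$ except $m=2,3$, so I cannot rely solely on the PDF existence for $v=6m+1$ when $m$ is odd.

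The key additional input is that the PDF need not have order exactly $6m+1$; one only needs an $(m,3)$-DTS, and an $(m,3)$-DTS of scope $6m$ can be extracted from \emph{any} construction whose $m$ base blocks of size $4$ realize each of the differences $1,2,\dots,6m$ exactly once, which is precisely the notion of an $(m,4,1)$-PSDS (threshold $c=1$). Indeed, an $(m,4,1)$-PSDS is equivalent to a normalized $(m,3)$-DTS of minimum scope $6m$, and an $(m,4,1)$-PSDS is in turn equivalent to a $(12m+1,4,1)$-PDF via the correspondence noted after the PSDS definition. So the cleanest route is: a $(12m+1,4,1)$-PDF exists $\iff$ an $(m,4,1)$-PSDS exists $\iff$ an $(m,3)$-DTS of minimum scope $6m$ exists. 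Applying Lemma~\ref{thm:4-PDF} to $v=12m+1$, the congruence $12m+1\equiv 1\pmod{12}$ holds for \emph{every} $m$, so the only obstructions are $12m+1\in\{25,37\}$, giving exactly $m=2$ and $m=3$. This matches the claimed exceptions perfectly and covers all $m\geq 1$.

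The main obstacle --- and the step requiring care --- is verifying the equivalence chain, particularly that an $(m,4,1)$-PSDS is genuinely equivalent to a minimum-scope $(m,3)$-DTS rather than merely implying one direction. A normalized $(m,3)$-DTS with each block $\{0,a_{i1},a_{i2},a_{i3}\}$ has scope $\geq 6m$ with equality forcing the $6m$ internal differences to be exactly $\{1,2,\dots,6m\}$ without repetition and without gaps; this is exactly the defining property of an $(m,4,1)$-PSDS with threshold $c=1$. Thus the equivalence is immediate, and combined with $(m,4,1)$-PSDS $\Leftrightarrow$ $(12m+1,4,1)$-PDF and Lemma~\ref{thm:4-PDF}, the result follows for all $m\geq 1$ with the sole exceptions $m=2,3$. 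I would present the argument succinctly, citing Lemma~\ref{thm:4-PDF} and the PSDS--PDF correspondence, since all the constructive heavy lifting has already been done.
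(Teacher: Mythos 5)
Your final argument is correct and is essentially the paper's own proof: both reduce the theorem to the existence of $(12m+1,4,1)$-PDFs (equivalently, $(m,4,1)$-PSDSs, i.e.\ minimum-scope $(m,3)$-DTSs) and invoke Lemma~\ref{thm:4-PDF}, where the excluded orders $12m+1\in\{25,37\}$ yield exactly the exceptions $m=2,3$. Note only that your opening specialization to a $(6m+1,4,1)$-PDF was an arithmetic slip --- for $k=3$ one has $mk(k+1)+1=12m+1$, and a $(6m+1,4,1)$-PDF would give an $(m/2,3)$-DTS, not an $(m,3)$-DTS --- but you correctly abandoned that route in your third paragraph, so the proof as finally assembled stands.
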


\subsection{Optical orthogonal codes}

An optical orthogonal code is a family of sequences with good auto- and cross-correlation properties, used for synchronization and user identification, respectively, in an optical code-division multiple access (OCDMA) system \cite{csw1989}.

\begin{Definition}
Let $n$ and $k$ be positive integers. An $(n,k,1)$-\emph{optical orthogonal code} $($briefly OOC$)$ is a family $\cal{C}$ of $(0,1)$ sequences $($called {\em codewords}$)$
of length $n$ and Hamming weight $k$ satisfying the following two properties:
\begin{enumerate}
\item[$(1)$] $($the auto-correlation$)$: for any
${\mathbf{x}}=\{x_i\}_{i=0}^{n-1}\in\cal{C}$ and any integer $r$, $r
\not\equiv 0\ ({\rm mod}\ n)$,
$\sum_{i=0}^{n-1}x_ix_{i+r}\leq 1;$
\item[$(2)$] $($the cross-correlation$)$: for any
${\mathbf{x}}=\{x_i\}_{i=0}^{n-1}\in\cal{C}$,
${\mathbf{y}}=\{y_i\}_{i=0}^{n-1}\in\cal{C}$ with
${\mathbf{x}}\not={\mathbf{y}}$ and any integer $r$,
$\sum_{i=0}^{n-1}x_iy_{i+r}\leq 1,$
\end{enumerate}
where all subscripts are reduced modulo $n$.
\end{Definition}

The number of codewords of an optical orthogonal code is called its \emph{size}. By analogy with the Johnson bound for constant-weight codes \cite{John62}, the largest possible size of an $(n,k,1)$-OOC is upper-bounded by $\lfloor (n-1)/(k(k-1))\rfloor$. An $(n,k,1)$-OOC is said to be \emph{J-optimal} if its size achieves this bound.


\begin{Proposition}\label{OOC-equiv} {\rm \cite[Theorem 2.1]{Yin98}}
A J-optimal $(n,k,1)$-OOC is equivalent to an $(n,k,1)$-CDP with $\lfloor (n-1)/(k(k-1))\rfloor$ base blocks.
\end{Proposition}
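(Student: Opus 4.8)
The plan is to set up the standard dictionary between binary sequences and subsets of $\Z_n$ and to show that the two correlation constraints translate, together, into the single packing constraint defining a CDP. To a codeword $\mathbf{x}=\{x_i\}_{i=0}^{n-1}$ of weight $k$ I would associate its support $F=\{i\in\Z_n: x_i=1\}$, a $k$-subset of $\Z_n$; this is a bijection between weight-$k$ binary sequences of length $n$ and $k$-subsets of $\Z_n$. The elementary identity driving everything is that, for any $r$,
$$\sum_{i=0}^{n-1} x_i x_{i+r}=|\{a\in F: a+r\in F\}|=|\{(a,b)\in F\times F: b-a=r\}|,$$
so the auto-correlation at a shift $r\neq 0$ equals the multiplicity of $r$ in the (ordered) difference list $\Delta F$. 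Hence the auto-correlation condition is equivalent to the statement that each $\Delta F$ contains every nonzero element of $\Z_n$ at most once.

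Next I would treat the cross-correlation. For codewords $\mathbf{x},\mathbf{y}$ with supports $F,G$ the same computation gives $\sum_i x_i y_{i+r}=|\{(a,b)\in F\times G: b-a=r\}|$, the multiplicity of $r$ among the cross-differences from $F$ to $G$. The crux, and the step I expect to be the main obstacle, is to recognise that once the auto-correlation condition holds, the cross-correlation condition is exactly what forbids a nonzero difference from being repeated across two distinct blocks. The key identity is the following: if some $d\neq 0$ lies in both $\Delta F$ and $\Delta G$, say $d=b-a=b'-a'$ with $a,b\in F$ and $a',b'\in G$, then $a'-a=b'-b=:r$, so the two distinct cross-pairs $(a,a')$ and $(b,b')$ share the cross-difference $r$, forcing the cross-correlation at shift $r$ to be at least $2$; conversely, a repeated cross-difference produces a difference common to $\Delta F$ and $\Delta G$. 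Running this over all ordered pairs of blocks shows that the auto- and cross-correlation conditions together are equivalent to requiring that $\Delta\F:=\bigcup_{F}\Delta F$ contain every nonzero element of $\Z_n$ at most once, i.e.\ that $\{F_1,\dots,F_s\}$ be an $(n,k,1)$-CDP. Along the way I would record that auto-correlation $\le 1$ with $k\ge 2$ forces every codeword to be aperiodic, hence to generate a full-length orbit, while cross-correlation $\le 1$ forces distinct codewords into distinct orbits; this guarantees that the $s$ supports are genuinely distinct base blocks. Since every step is reversible, this yields a bijection between $(n,k,1)$-OOCs and $(n,k,1)$-CDPs that preserves the number of codewords / base blocks.

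Finally I would pin down the J-optimal case. Each base block of an $(n,k,1)$-CDP contributes $k(k-1)$ distinct nonzero ordered differences, and these lists are disjoint across blocks, so a CDP with $s$ base blocks satisfies $s\,k(k-1)\le n-1$, that is $s\le\floor{(n-1)/(k(k-1))}$; this is precisely the Johnson bound on the size of an $(n,k,1)$-OOC. Consequently an OOC is J-optimal exactly when its associated CDP attains the maximum number $\floor{(n-1)/(k(k-1))}$ of base blocks, which establishes the asserted equivalence.
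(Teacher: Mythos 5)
Your proof is correct and takes the standard route: the paper itself gives no proof of this proposition (it is cited from Yin's Theorem 2.1), and your support-set dictionary---auto-correlation at shift $r$ equals the multiplicity of $r$ in $\Delta F$, cross-correlation repetitions correspond exactly to a nonzero difference shared by two blocks, and the Johnson bound coincides with the block-count bound $\lfloor (n-1)/(k(k-1))\rfloor$---is precisely the classical equivalence argument from that reference.
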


It was shown in \cite{bw1987,csw1989} that a J-optimal $(n,3,1)$-OOC exists if and only if $n\not\equiv 14,20 \pmod{24}$. Recently, Zhao, Chang and Feng \cite{zcf2025} established the existence of J-optimal $(n,4,1)$-OOCs by constructing $(n,4,1)$-CDPs with $\lfloor (n-1)/12\rfloor$ base blocks for all $n\neq 25$.


\begin{Theorem}\label{thm:OOC-4}{\rm \cite{zcf2025}}
A J-optimal $(n,4,1)$-OOC exists if and only if $n\neq 25$.
\end{Theorem}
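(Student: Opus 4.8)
The plan is to pass through the equivalence in Proposition \ref{OOC-equiv}, which reduces the problem to constructing, for every $n\neq 25$, an $(n,4,1)$-CDP attaining the maximum number $\lfloor (n-1)/12\rfloor$ of base blocks, and to showing that no such packing exists when $n=25$. The nonexistence is immediate: a $(25,4,1)$-CDP with $\lfloor 24/12\rfloor=2$ base blocks would use all $24$ nonzero differences exactly once, hence would be a $(25,4,1)$-CDF, which is ruled out by Theorem \ref{thm:4-CDF-lambda}. This disposes of the ``only if'' direction, so the remaining task is purely constructive.

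First I would organize the constructive part by the residue of $n$ modulo $12$ and extract the cases that follow directly from the difference-family results already proved. When $n\equiv 1\pmod{12}$ the bound forces $12\lfloor (n-1)/12\rfloor=n-1$, so an optimal packing is exactly a $(n,4,1)$-CDF; by Theorem \ref{thm:4-CDF-lambda} one exists for every such $n\neq 25$ (in particular for $n=37$, where no $(37,4,1)$-PDF exists but a CDF does). When $n\equiv 2\pmod{12}$, say $n=12k+2$, I would take the base blocks of a $(12k+1,4,1)$-PDF from Lemma \ref{thm:4-PDF}; since their positive differences are exactly $\{1,\dots,6k\}$ and their spans stay below $n$, reducing them modulo $n$ produces $k=\lfloor(n-1)/12\rfloor$ blocks whose $\pm$-differences are the $2\cdot 6k$ distinct values $\{\pm1,\dots,\pm 6k\}$, i.e.\ an optimal CDP. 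This works whenever $12k+1\notin\{25,37\}$, leaving only $n\in\{26,38\}$ in this class to be handled by hand.

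For all remaining residues I would mimic the direct constructions of Lemmas \ref{lem:m=8(mod9)} and \ref{lem:other-PSDS}, now targeting packings rather than perfect systems. Writing $n=12t+c$ with $c$ ranging over the residues still to be covered, I would take most base blocks in the general form \eqref{eqn:general form} with the coefficients of $t$ given by the $(72,4,1)$-LDF of Lemma \ref{lem:72-LDF}; the LDF property guarantees that the difference intervals of length $\approx t$ it generates tile $\mathbb{Z}_n\setminus\{0\}$ up to the $O(1)$ slack forced by the Johnson bound. I would then select the shift parameters $\epsilon_{r,j}$ and the index ranges $I_r$ (as in Tables \ref{tab:4.3} and \ref{tab:5.3}) to make the generated differences pairwise distinct, and absorb the bounded number of leftover differences into a fixed, residue-dependent list of sporadic base blocks located by a Kramer--Mesner computer search. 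The finitely many small $n$ not covered by these families would be settled by direct search.

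The crux is that J-optimality is required for \emph{every} $n\neq 25$, across all twelve residues and both parities, whereas the clean PDF/CDF machinery is confined to $n\equiv 1,2\pmod{12}$. For the other residues an optimal packing cannot in general be obtained by embedding a non-wrapping difference family: the case $n=26$ already shows this, since no $(25,4,1)$-PDF exists yet an optimal $(26,4,1)$-CDP does, forcing base blocks that wrap around in $\mathbb{Z}_n$. Consequently each residue demands its own LDF-shift table and its own sporadic blocks, and the genuine labor lies in verifying, uniformly in $t$, that the difference intervals produced by the chosen parameters are disjoint, together with the finite searches for the sporadic blocks and the small orders.
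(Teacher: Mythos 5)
Your treatment of the residues $n\equiv 1,2\pmod{12}$ and of the nonexistence at $n=25$ is correct (the counting argument reducing an optimal $(25,4,1)$-CDP to a $(25,4,1)$-CDF is exactly right), but the proposal has a genuine gap for the remaining ten residue classes, and it stems from a false premise. You assert that the ``clean PDF/CDF machinery is confined to $n\equiv 1,2\pmod{12}$''; in fact the key observation the paper uses (Abel and Buratti, \cite[Remark 1.4]{ab2004}) is that a single $(v,4,1)$-PDF yields a J-optimal $(n,4,1)$-OOC for \emph{every} $n$ with $v\leq n<v+12$, across all residues simultaneously. The reason is twofold: the Johnson bound $\lfloor (n-1)/12\rfloor$ is constant, equal to $(v-1)/12$, on the whole window $[v,v+11]$; and the PDF's differences $\pm\{1,\ldots,(v-1)/2\}$ remain pairwise distinct and nonzero modulo any $n\geq v$ since $(v-1)/2<n/2$, so the same base blocks form an $(n,4,1)$-CDP meeting the bound, which is a J-optimal OOC by Proposition \ref{OOC-equiv}. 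Combined with Lemma \ref{thm:4-PDF}, this settles all $n\geq 49$ in one line; the paper explicitly claims only this range as its new proof and defers $n\leq 48$ (where the windows anchored at $v=25,37$ fail) to \cite{zcf2025}. Your $n=26$ example shows only that these two exceptional windows need ad hoc treatment, not that non-wrapping embeddings fail ``in general'' for other residues.

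Because you missed this, the bulk of your argument is an unexecuted program rather than a proof: for ten residue classes you propose building new per-residue infinite families from the $(72,4,1)$-LDF of Lemma \ref{lem:72-LDF}, with shift tables and Kramer--Mesner searches for sporadic blocks that are never produced or verified. Even granting that such constructions could be made to work (the analogous labor in Sections \ref{sec:main-construction} and \ref{sec:multi} suggests they could), nothing in the proposal actually establishes the existence half of the theorem outside $n\equiv 1,2\pmod{12}$, and the claimed obstruction motivating all that labor is incorrect. The fix is simple: replace the per-residue program by the windowed embedding argument above, and handle $13\leq n\leq 48$ (finitely many cases) by citation or direct search.
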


Abel and Buratti \cite[Remark 1.4]{ab2004} observed that a $(v,4,1)$-PDF implies a J-optimal $(n,4,1)$-OOC for all $v \leq n < v+12$. Therefore, by applying Lemma \ref{thm:4-PDF}, we obtain a much shorter and more elegant proof of Theorem \ref{thm:OOC-4} for the case $n\geq 49$.

\subsection{Geometric orthogonal codes}

The study of geometric orthogonal codes was motivated by their application in DNA origami technology (cf. \cite{gwnd,roth}). To design sets of macrobonds in 3D DNA origami so as to reduce undesirable bonding arising from misalignment and mismatches, Doty and Winslow \cite{dw} introduced the concept of geometric orthogonal codes.

\begin{Definition}
Let $\mathbb{Z}$ be the set of integers. Let $n_1$, $n_2$ and $w$ be positive integers. An {\em $(n_1\times n_2,w,1)$-geometric orthogonal code} $($briefly GOC$)$ is a family $\cal{F}$ of $w$-subsets of $[0,n_1-1]\times[0,n_2-1]$ $($called \emph{codewords} or {\em macrobonds}$)$ such that:
\begin{itemize}
\item[$(1)$] $($the aperiodic auto-correlation$)$:
		$|B\cap(B+(s,t))|\leq 1$ for all $B\in\cal{F}$ and every $(s,t)\in \mathbb{Z}\times \mathbb{Z}\setminus\{(0,0)\}$;
\item[$(2)$] $($the aperiodic cross-correlation$)$:
		$|A\cap(B+(s,t))|\leq 1$ for all $A,B\in\cal{F}$ with $A\neq B$ and every $(s,t)\in\mathbb{Z}\times \mathbb{Z}$,
\end{itemize}
where $B+(s,t)=\{(x+s,y+t):(x,y)\in B\}$.
\end{Definition}

The number of codewords of an $(n_1\times n_2,w,1)$-GOC is called its \emph{size}. For fixed $n_1$, $n_2$, and $w$, denote by $\Phi(n_1\times n_2,w,1)$ the largest possible size among all $(n_1\times n_2,w,1)$-GOCs. An $(n_1\times n_2,w,1)$-GOC with $\Phi(n_1\times n_2,w,1)$ codewords is said to be \emph{optimal}.

An upper bound on the size of an optimal $(n_1\times n_2,w,1)$-GOC is provided in \cite{wcftw}.

\begin{Proposition}\label{lem:GOC-nece}\emph{\cite[Corollary 3.4]{wcftw}}
For $2\leq w\leq5$,
$$\Phi(n_1\times n_2,w,1)\leq\frac{4n_1n_2-2n_1-2n_2}{w(w-1)}.$$
For $w\geq 6$,
$$\Phi(n_1\times n_2,w,1)<\frac{4n_1n_2-2n_1-2n_2}{w(w-1)}.$$
\end{Proposition}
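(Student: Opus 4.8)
The plan is to prove the bound by a difference-counting (packing) argument in the plane $\mathbb{Z}\times\mathbb{Z}$, reinterpreting the two correlation conditions as the single statement that the differences arising from all codewords are pairwise distinct. For a codeword $B\in\mathcal{F}$, associate to it the list of $w(w-1)$ ordered nonzero difference vectors $\{p-q:p,q\in B,\ p\neq q\}$. First I would show that, as $B$ ranges over all of $\mathcal{F}$, these lists are globally disjoint, i.e. every nonzero vector of $\mathbb{Z}\times\mathbb{Z}$ occurs at most once in total. Within a single codeword this is exactly the aperiodic auto-correlation: if $p-q=p'-q'$ with $\{p,q\},\{p',q'\}\subseteq B$ and $(p,q)\neq(p',q')$, then setting $(s,t)=p-p'=q-q'\neq(0,0)$ gives $\{p,q\}\subseteq B\cap(B+(s,t))$, contradicting condition $(1)$. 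Across two distinct codewords $A\neq B$ the same computation applies: a coincidence $p-q=p'-q'$ with $p,q\in A$ and $p',q'\in B$ yields a shift $(s,t)=p-p'$ for which $\{p,q\}\subseteq A\cap(B+(s,t))$, now violating the aperiodic cross-correlation $(2)$. Hence the combined collection of ordered differences is a genuine packing.

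Next I would count the ambient space of admissible differences. Since all coordinates lie in $[0,n_1-1]\times[0,n_2-1]$, every difference vector $(s,t)$ satisfies $s\in[-(n_1-1),n_1-1]$ and $t\in[-(n_2-1),n_2-1]$, so the number of nonzero difference vectors available is $(2n_1-1)(2n_2-1)-1=4n_1n_2-2n_1-2n_2$. Combining this with the packing property from the previous step gives $|\mathcal{F}|\cdot w(w-1)\leq 4n_1n_2-2n_1-2n_2$, which is precisely the stated inequality $\Phi(n_1\times n_2,w,1)\leq(4n_1n_2-2n_1-2n_2)/(w(w-1))$, and already settles the range $2\leq w\leq 5$.

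The remaining and harder task is to upgrade ``$\leq$'' to a strict ``$<$'' when $w\geq 6$. Equality in the bound above forces a \emph{perfect} packing: every one of the $4n_1n_2-2n_1-2n_2$ nonzero difference vectors must be realized exactly once by some codeword. The plan is to rule this out by an extremal analysis of the boundary differences, closely paralleling the nonexistence of $(m,k,c)$-PSDSs for $k\geq 6$ recorded in Proposition \ref{thm:PSDS-nece}(2). Concretely, the extreme vectors such as $(\pm(n_1-1),\pm(n_2-1))$ can only be produced by pairs of opposite corners of the bounding box, which pins down where the extremal points of codewords must sit; tracking how many of the largest difference vectors a single $w$-set can simultaneously contribute and comparing this against the perfect-packing requirement should force a contradiction once $w$ is large enough. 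The main obstacle is that this counting is delicate and tight: the gap between what a codeword can supply and what a perfect packing demands only turns strictly negative at $w=6$, so the argument must be organized so that the decisive inequality flips exactly at that threshold and not earlier, mirroring the same numerical cliff that governs the PSDS nonexistence result.
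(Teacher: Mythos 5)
Your first two steps are correct and complete. The derivation that the auto- and cross-correlation conditions force all ordered difference vectors, over all codewords, to be pairwise distinct is exactly right, and the count of available nonzero vectors $(2n_1-1)(2n_2-1)-1=4n_1n_2-2n_1-2n_2$ then gives $|\mathcal{F}|\,w(w-1)\leq 4n_1n_2-2n_1-2n_2$ for every $w\geq 2$, which settles the case $2\leq w\leq 5$. (Note the paper does not prove this proposition at all; it imports it from \cite[Corollary 3.4]{wcftw}, so your packing argument is being judged on its own merits, and this half of it stands.)

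The strict inequality for $w\geq 6$, however, is a genuine gap: what you give is a plan, not a proof. You correctly observe that equality forces a perfect packing (every nonzero vector of the box realized exactly once, in particular the corner vectors pin down corner points of codewords), but you never state or prove the counting lemma that is supposed to bound how many ``large'' difference vectors one $w$-set can supply, and the claim that the decisive inequality ``flips exactly at $w=6$'' is precisely the content of the theorem you are trying to prove -- it amounts to redoing the Kotzig--Turgeon nonexistence argument from scratch in two dimensions, which is nontrivial and which you acknowledge you have not organized. The clean repair is to make your analogy with Proposition \ref{thm:PSDS-nece}(2) literal rather than heuristic: assume equality and linearize. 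The map $\phi(s,t)=(2n_2-1)s+t$ is additive, injective on the box $[-(n_1-1),n_1-1]\times[-(n_2-1),n_2-1]$, and sends it bijectively onto the integer interval $[-M,M]$ with $M=2n_1n_2-n_1-n_2$. Applying $\phi$ to each codeword gives $w$-subsets of $\mathbb{Z}$ whose positive differences, under the equality assumption, cover $[1,M]$ exactly once, where $M=|\mathcal{F}|\binom{w}{2}$; that is an $(|\mathcal{F}|,w,1)$-PSDS, which does not exist for $w\geq 6$ by Proposition \ref{thm:PSDS-nece}(2). This contradiction yields the strict bound with no new extremal analysis needed. As written, your proposal proves only the first displayed inequality.
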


To construct geometric orthogonal codes, a new type of difference packing was introduced by Wang et al. \cite{wcftw}.

\begin{Definition}
Let $U_1$ and $U_2$ be sets of integers containing $0$ such that if $a\in U_j$ then $-a\in U_j$ for every $j=1,2$. For any $w$-subset $B \subseteq U_1 \times U_2$, define its \emph{difference list} as the multiset
$$\delta B:=\{(x_1-y_1,x_2-y_2):(x_1,x_2),(y_1,y_2)\in B,(x_1,x_2)\neq(y_1,y_2)\}.$$
An \emph{$(U_1\times U_2,w,1)$-geometrical difference packing} (briefly GDP) is a collection
$\mathcal{B}$ of $w$-subsets of $U_1\times U_2$ (called \emph{base blocks}) such that $\delta\mathcal{B}:=\bigcup_{B\in\mathcal{B}}\delta B$ covers each element of $U_1\times U_2\setminus \{(0,0)\}$ at most once.
\end{Definition}

Let $u_1$ and $u_2$ be positive odd integers. We use the notation $(u_1 \times u_2, w, 1)$-GDP to refer to a $([-(u_1-1)/2,(u_1-1)/2] \times [-(u_2-1)/2,(u_2-1)/2], w, 1)$-GDP for brevity.



\begin{Proposition}\label{prop:equi}\emph{\cite[Proposition 2.8]{wcftw}}
A $((2n_1-1)\times(2n_2-1),w,1)$-GDP with $b$ base blocks is equivalent to an $(n_1\times n_2,w,1)$-GOC with $b$ codewords. 
\end{Proposition}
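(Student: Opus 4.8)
The plan is to exhibit a block-preserving bijection between the two families and to reduce both defining conditions to a single statement about repeated differences. The first observation is that both structures are invariant under translating each block independently: for any integer vector $(u,v)$ one has $\delta(B+(u,v))=\delta B$, and likewise $|A\cap(B+(s,t))|$ is unchanged if $A$ and $B$ are translated, since the shift parameter $(s,t)$ already ranges over all of $\mathbb{Z}\times\mathbb{Z}$. Hence absolute position is irrelevant, and the only geometric datum that matters is the bounding box of each block. Since a GOC block lies in $[0,n_1-1]\times[0,n_2-1]$, its internal differences occupy exactly the symmetric grid $[-(n_1-1),n_1-1]\times[-(n_2-1),n_2-1]=U_1\times U_2$ with $u_j=2n_j-1$; conversely, a GDP whose differences all fall inside $U_1\times U_2$ has every block of $x$-spread at most $n_1-1$ and $y$-spread at most $n_2-1$, so each block may be translated into $[0,n_1-1]\times[0,n_2-1]$. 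This matches the two ambient grids and sets up the block correspondence $\mathcal{F}\leftrightarrow\mathcal{B}$.

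The heart of the argument is the equivalence, for the multiset $\delta\mathcal{B}=\bigcup_{B}\delta B$ of within-block differences, between ``every nonzero vector is covered at most once'' and ``all auto- and cross-correlations are at most $1$''. I would prove the contrapositive in both directions by a pairing argument. Suppose a nonzero vector $d$ occurs at least twice in $\delta\mathcal{B}$, arising from ordered pairs $(p_1,q_1)$ and $(p_2,q_2)$ with $p_i-q_i=d$. If both pairs lie in one block $B$, then $p_1,p_2\in B\cap(B+d)$ with $p_1\neq p_2$, so $|B\cap(B+d)|\geq 2$, violating the auto-correlation bound. If the pairs lie in distinct blocks $A\ni p_1,q_1$ and $B\ni p_2,q_2$, set $(s,t)=p_1-p_2=q_1-q_2$ (equal because $p_1-q_1=p_2-q_2$); then $p_1,q_1\in A\cap(B+(s,t))$ with $p_1\neq q_1$, so $|A\cap(B+(s,t))|\geq 2$, violating the cross-correlation bound. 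Running each implication in reverse recovers a repeated difference from a correlation value of at least $2$, so the two conditions are genuinely equivalent.

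Combining the parts yields the result. Given an $(n_1\times n_2,w,1)$-GOC $\mathcal{F}$, viewing its blocks inside $U_1\times U_2$ turns $\mathcal{F}$ into a $((2n_1-1)\times(2n_2-1),w,1)$-GDP with the same number of blocks, by the forward direction of the pairing argument. Conversely, given such a GDP, translating each block into $[0,n_1-1]\times[0,n_2-1]$ (possible by the spread bound) leaves $\delta\mathcal{B}$ unchanged, and the reverse direction certifies the auto- and cross-correlation conditions, producing a GOC with $b$ codewords. The correspondence preserves $w$, the block count $b$, and the index $1$.

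I expect the main obstacle to be the bookkeeping that aligns the two ambient grids. The correlation conditions live on blocks confined to the quadrant $[0,n_1-1]\times[0,n_2-1]$, whereas the difference condition naturally lives on the symmetric grid $U_1\times U_2$ of double the side length; the nontrivial point is to recognize that ``packing differences inside $U_1\times U_2$'' is exactly what bounds each block's spread by $n_1-1$ and $n_2-1$, making the translation back into the quadrant always available. The other delicate point, easy to misstate, is that a cross-correlation collision at shift $(s,t)$ corresponds not to $(s,t)$ itself being a repeated difference, but to the common difference $d=p_1-p_2=q_1-q_2$ occurring once in each of two distinct blocks; keeping the roles of the shift and of the repeated difference separate is what makes the pairing argument go through cleanly.
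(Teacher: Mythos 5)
The paper itself offers no proof of this proposition: it is quoted verbatim from \cite[Proposition 2.8]{wcftw}, so your attempt can only be measured against the intended argument, and in structure it matches it. Your pairing argument is correct and carefully done: translation invariance of both $\delta B$ and the correlation conditions, the observation that an auto-correlation $\geq 2$ at shift $(s,t)\neq(0,0)$ is the same as $(s,t)$ occurring twice within one $\delta B$, and — the genuinely delicate point, which you handle correctly — that a cross-correlation collision between $A$ and $B$ at shift $(s,t)=p_1-p_2=q_1-q_2$ corresponds not to $(s,t)$ but to the common difference $d=p_1-q_1=p_2-q_2$ occurring once in $\delta A$ and once in $\delta B$. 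The forward direction (GOC to GDP) is also fine as you state it, since $[0,n_1-1]\subseteq[-(n_1-1),n_1-1]$ and all internal differences of a quadrant-confined block land inside $U_1\times U_2$.

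The gap is in the reverse direction, precisely at the step you flagged but then asserted rather than proved: that ``packing differences inside $U_1\times U_2$ is exactly what bounds each block's spread by $n_1-1$ and $n_2-1$.'' Under the definition as reproduced in this paper, that implication fails. The GDP condition only requires that $\delta\mathcal{B}$ cover each element \emph{of} $U_1\times U_2\setminus\{(0,0)\}$ at most once; a difference that falls \emph{outside} the grid $U_1\times U_2$ is simply unconstrained, and blocks, being subsets of $U_1\times U_2$, may have spread up to $2(n_1-1)$, in which case no translation into $[0,n_1-1]\times[0,n_2-1]$ exists. This is not a cosmetic issue: take $n_1=n_2=2$, $w=3$, and the three blocks $\{(-1,-1),(0,1),(1,0)\}$, $\{(-1,1),(0,-1),(1,0)\}$, $\{(-1,0),(1,1),(1,-1)\}$ in $\{-1,0,1\}^2$. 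Their only in-grid differences are $\pm(1,-1)$ and $\pm(1,1)$, each occurring once (all other differences, such as $(2,1)$ or $(0,-2)$, escape the grid), so these three blocks satisfy the quoted GDP definition; yet any two $3$-subsets of the $2\times 2$ grid meet in two points, so no $(2\times2,3,1)$-GOC has more than one codeword, and the claimed equivalence with $b=3$ is false on the literal reading. The fix — and what the intended definition in \cite{wcftw} supplies — is the additional requirement $\delta\mathcal{B}\subseteq U_1\times U_2$, equivalently that every base block fits in an $n_1\times n_2$ bounding box; with that hypothesis your conditional clause ``a GDP whose differences all fall inside $U_1\times U_2$'' is discharged, the translation into the quadrant is always available (and distinct blocks stay distinct, since coinciding translates would repeat an in-grid difference), and your argument is complete. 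As written, however, the spread bound is the missing step, and it cannot be derived from the definitions as this paper states them.
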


The following construction is a special case of Constructions 4.1 and 4.4 in \cite{wcftw}. For completeness we give a sketch of its proof.

\begin{Construction}\label{cons:GDP}\emph{\cite{wcftw}}
If there exist a $(u_1,w,1)$-PDF, a $(u_2,w,1)$-PDF and a PDM$(w,u_2)$, then there exists a $(u_1\times u_2,w,1)$-GDP with $(u_1u_2-1)/(w(w-1))$ base blocks.
\end{Construction}

\begin{proof}
Let $\mathcal{F}_1$ be a $(u_1,w,1)$-PDF and $\mathcal{F}_2$ be a $(u_2,w,1)$-PDF. Without loss of generality, assume that each base block in $\mathcal F_1$ and $\mathcal F_2$ contains the element $0$. Let $D=(d_{ij})$ be a PDM$(w,u_2)$. For each $F_1=\{a_{i}:1\leq i\leq w\}\in \mathcal{F}_1$ and for the $j$-th column of $D$, construct a set
$$B_{F_1,j}=\{(a_i,d_{ij}-\frac{u_2-1}{2}):1\leq i\leq w\}.$$
For each $F_2=\{b_{i}:1\leq i\leq w\}\in \mathcal{F}_2$, construct a set
$$C_{F_2}=\{(0,b_i):1\leq i\leq w\}.$$
Let $\mathcal{B}_1=\{B_{F_1,j}:F_1\in \mathcal{F}_1, 1\leq j\leq u_2\}$ and $\mathcal{B}_2=\{C_{F_2}:F_2\in \mathcal{F}_2\}$.
Then $\mathcal{B}_1\cup\mathcal B_2$ forms a $(u_1\times u_2,w,1)$-GDP with $(u_1u_2-1)/(w(w-1))$ base blocks.
\end{proof}

For $w=3$, the exact value of $\Phi(n_1\times n_2,3,1)$ is determined in \cite[Theorem 1.3]{wcftw} for any positive integers $n_1$ and $n_2$. There is no systematic treatment on the case of $w=4$ in the literature.

\begin{Theorem}\label{thm:GOC-main}
Let $n_1, n_2 \equiv 1 \pmod{6}$ and $n_1, n_2 \not\in \{13, 19\}$. Then
$$\Phi(n_1\times n_2,4,1)=\frac{2n_1n_2-n_1-n_2}{6}.$$
\end{Theorem}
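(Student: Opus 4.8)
The plan is to establish the two matching inequalities and let them pinch. The upper bound is immediate from Proposition \ref{lem:GOC-nece} with $w=4$, since $(4n_1n_2-2n_1-2n_2)/(w(w-1))=(2n_1n_2-n_1-n_2)/6$; this already gives $\Phi(n_1\times n_2,4,1)\leq(2n_1n_2-n_1-n_2)/6$, so all the work lies in producing an $(n_1\times n_2,4,1)$-GOC of exactly this size.

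For the lower bound I would pass to the equivalent packing. Put $u_1=2n_1-1$ and $u_2=2n_2-1$; then $n_i\equiv1\pmod 6$ gives $u_i\equiv1\pmod{12}$, and $n_i\notin\{13,19\}$ gives $u_i\notin\{25,37\}$, which is exactly the admissible range of Lemma \ref{thm:4-PDF}. By Proposition \ref{prop:equi} it suffices to construct a $(u_1\times u_2,4,1)$-GDP; such a GDP carries $(u_1u_2-1)/12$ base blocks, and a short computation gives $(u_1u_2-1)/12=(4n_1n_2-2n_1-2n_2)/12=(2n_1n_2-n_1-n_2)/6$, matching the upper bound exactly. To build the GDP I would apply Construction \ref{cons:GDP} with $w=4$: Lemma \ref{thm:4-PDF} supplies both a $(u_1,4,1)$-PDF and a $(u_2,4,1)$-PDF, and feeding these together with a $\mathrm{PDM}(4,u_2)$ into Construction \ref{cons:GDP} returns precisely a $(u_1\times u_2,4,1)$-GDP on $(u_1u_2-1)/12$ base blocks. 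Translating back through Proposition \ref{prop:equi} then yields a GOC attaining the upper bound, so the two bounds coincide and the claimed value of $\Phi$ follows.

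The one input not furnished by the earlier results, and the hard part, is the existence of a $\mathrm{PDM}(4,u_2)$ for every $u_2\equiv1\pmod{12}$ that arises (note $u_2=13$ occurs, from $n_2=7$, so small orders are unavoidable). This cannot be obtained by the method behind Theorem \ref{thm:main-PDM}: by Proposition \ref{lem:PDM-ASP} a homogeneous $\mathrm{PDM}(4,n)$ is equivalent to an $\mathrm{ASP}(4,n)$, and the Mathon-type argument that produced $\mathrm{ASP}(3,n)$ from $(n,4,1)$-PDFs would here require $(v,5,1)$-PDFs, whose spectrum is unsettled. I would therefore furnish a (not necessarily homogeneous) $\mathrm{PDM}(4,u_2)$ either by invoking the perfect-difference-matrix literature or by a direct construction valid for all $u_2\equiv1\pmod{12}$; securing this single ingredient is the genuine obstacle, after which the reduction above—otherwise just bookkeeping—closes the argument.
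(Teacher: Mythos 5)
Your reduction is exactly the one the paper uses: the upper bound from Proposition \ref{lem:GOC-nece} with $w=4$, the passage to a $((2n_1-1)\times(2n_2-1),4,1)$-GDP via Proposition \ref{prop:equi}, the two PDFs from Lemma \ref{thm:4-PDF}, and the block count $(u_1u_2-1)/12=(2n_1n_2-n_1-n_2)/6$ are all correct and match the paper's argument. But the proposal has a genuine gap precisely at the point you flag as "the genuine obstacle": you never produce the PDM$(4,u_2)$, and you dismiss the one route that actually works. Your objection --- that Theorem \ref{thm:main-PDM} cannot help because a homogeneous PDM$(4,n)$ would require an ASP$(4,n)$, hence $(v,5,1)$-PDFs --- is aimed at the wrong target. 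Construction \ref{cons:GDP} does not require the PDM to be homogeneous, and a (non-homogeneous) PDM$(4,n)$ is obtained from the homogeneous PDM$(3,n)$ of Theorem \ref{thm:main-PDM} by simply appending one constant row. This is exactly how the paper closes the argument: since each row of a homogeneous PDM$(3,n)$ is a permutation of $\{0,1,\ldots,n-1\}$, its componentwise differences with the constant row $(n-1)/2,\ldots,(n-1)/2$ sweep out exactly $[-(n-1)/2,(n-1)/2]$ (the paper phrases this as adjoining a row of zeros, which is the same observation after recentering the entries); the pairs among the original three rows are unaffected, so the $4\times n$ matrix is a PDM$(4,n)$. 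Homogeneity of the $3$-row matrix is precisely what makes the extension work, and it is then discarded.

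Note also that this resolves your worry about small orders such as $u_2=13$: Theorem \ref{thm:main-PDM} holds for \emph{every} $n\equiv 1\pmod{12}$ with no exceptions (the cases $n=25,37$ being covered through the Wang--Chang ASPs behind Theorem \ref{thm:ASP}), so the needed PDM$(4,u_2)$ exists for all $u_2=2n_2-1$ arising from $n_2\equiv 1\pmod 6$; the exclusions $n_1,n_2\notin\{13,19\}$ in the theorem are forced only by the PDF ingredient of Lemma \ref{thm:4-PDF}. With this one-line construction of the PDM$(4,u_2)$ supplied, the rest of your proposal is complete and coincides with the paper's proof.
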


\begin{proof}
For $j=1,2$, Lemma \ref{thm:4-PDF} guarantees the existence of a $(2n_j-1,4,1)$-PDF for any $n_j\equiv 1\pmod{6}$ with $n_j\neq 13,19$. By Theorem \ref{thm:main-PDM}, a homogeneous PDM$(3,2n_2-1)$ exists for any $n_2\equiv 1 \pmod{6}$. Adding a row of all zeros to this PDM yields a PDM$(4,2n_2-1)$. Thus one can apply Construction \ref{cons:GDP} to obtain a $((2n_1-1)\times (2n_2-1),4,1)$-GDP with $(2n_1n_2-n_1-n_2)/6$ base blocks. By Proposition \ref{prop:equi}, this GDP is equivalent to an $(n_1\times n_2,4,1)$-GOC with $(2n_1n_2-n_1-n_2)/6$ codewords. Proposition \ref{lem:GOC-nece} confirms that this GOC is optimal.
\end{proof}

\subsection{Graceful graphs}

\begin{Definition}
A graph $G=(V(G),E(G))$ with $q$ edges is said to admit a \emph{graceful labeling} if there exists an injective function $f: V(G) \rightarrow \{0, 1, 2, \ldots, q\}$ such that the induced function $f': E(G) \rightarrow \{1, 2, \ldots, q\}$, defined by $f'(uv) = |f(u) - f(v)|$ for each edge $uv \in E(G)$, is bijective. A graph that admits a graceful labeling is called a \emph{graceful graph}.
\end{Definition}

Let $m$ and $k$ be positive integers. Following the notation of the survey \cite{Gallian} on graph labelings, we denote by $K_k^{(m)}$ the \emph{windmill graph}, which comprises $m$ copies of the complete graph $K_k$ sharing a single common vertex. The case  $k=4$ is referred to as a \emph{French $m$-windmill graph} \cite{b78}.

Bermond \cite{b78} observed that the gracefulness of $K_4^{(m)}$ is equivalent to the existence of a $(12m+1,4,1)$-PDF. This equivalence motivated Conjecture \ref{conj:Berm} in Section \ref{sec:intro}. As a direct consequence of Lemma \ref{thm:4-PDF}, we obtain the following result.
	
\begin{Theorem}\label{main-graceful}
The French $m$-windmill graph is graceful for all positive integers $m\neq 2,3$.
\end{Theorem}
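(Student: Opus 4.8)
The plan is to reduce the statement entirely to Lemma~\ref{thm:4-PDF} through Bermond's equivalence between graceful labelings of $K_4^{(m)}$ and perfect difference families \cite{b78}. First I would pin down the combinatorial parameters: since the $m$ copies of $K_4$ share only the central vertex, $K_4^{(m)}$ has $q = 6m$ edges, so a graceful labeling must assign the central vertex together with the $3m$ remaining vertices distinct labels in $\{0,1,\dots,6m\}$ and induce every edge difference in $\{1,\dots,6m\}$ exactly once.

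Next I would make Bermond's correspondence explicit to confirm the stated equivalence. Label the shared vertex $0$; then each copy of $K_4$ becomes a set $\{0,x_1,x_2,x_3\}$, and its six edge differences are exactly the positive differences $\Delta^+\{0,x_1,x_2,x_3\}$. Requiring the $m$ copies to cover $\{1,\dots,6m\}$ bijectively is precisely an $(m,4,1)$-PSDS, equivalently a $(12m+1,4,1)$-PDF (note $(v-1)/2 = 6m$ when $v = 12m+1$). The injectivity of the vertex labeling is automatic: if two copies shared a nonzero label $x$, then the difference $x$ would be produced twice, contradicting the exactly-once condition. This is the content of Bermond's observation, and I would simply cite it rather than reprove it.

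The remaining step is arithmetic. Putting $v = 12m+1$, the congruence $v \equiv 1 \pmod{12}$ holds for every positive integer $m$, so Lemma~\ref{thm:4-PDF} says a $(12m+1,4,1)$-PDF exists if and only if $v \notin \{25,37\}$, i.e.\ $m \notin \{2,3\}$; the two excluded cases are genuine by the nonexistence results in Proposition~\ref{thm:non-existence-PDF}(ii) and Lemma~\ref{lem:k=4-small orders}. Combining this with the equivalence yields gracefulness of $K_4^{(m)}$ exactly for $m \neq 2,3$. There is no real obstacle here: all the substantive work has already been carried out in establishing Lemma~\ref{thm:4-PDF}, and this theorem is an immediate corollary. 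The only point requiring minor care is the bookkeeping in the equivalence, namely correctly matching the $6m$ edge differences of the windmill to the positive differences of an $m$-block PDF, which is routine.
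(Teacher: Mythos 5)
Your proposal is correct and follows exactly the paper's route: the paper likewise proves this theorem by citing Bermond's equivalence between gracefulness of $K_4^{(m)}$ and the existence of a $(12m+1,4,1)$-PDF, and then invoking Lemma \ref{thm:4-PDF} with the observation that $v=12m+1\in\{25,37\}$ precisely when $m\in\{2,3\}$. Your additional unpacking of the correspondence (normalizing blocks to contain $0$, deducing label injectivity from the exactly-once difference condition) is sound and, if anything, more explicit than the paper, which leaves the equivalence entirely to the citation of Bermond.
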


\section{Concluding remarks}\label{con:conluding}

This paper introduced layered difference families to establish the existence spectrum for $(m,4,3)$-PSDSs and $(v,4,\lambda)$-PDFs. The significance of this framework lies in its ability to construct infinite classes of $(v,k,\lambda)$-CDPs with a large number of base blocks. This is achieved by requiring that every base block adheres to the specific form given in \eqref{eqn:general form} in Section \ref{sec:layered-DF}. Under this approach, the construction of an infinite class reduces to that of a small-order LDF, which is then subjected to further expansion. This methodology is powerful, as it condenses a problem of infinite scope into a finite and manageable one. Therefore, the following research directions are motivated: to further investigate the properties of LDFs and to construct more LDFs with a large base block size.

Proposition \ref{thm:PSDS-nece}(2) gives a necessary condition $m \geq 2c-1$ for the existence of an $(m,4,c)$-PSDS. This paper only provides constructions for the cases $c=1$ and $c=3$. Investigating the general existence problem for arbitrary $c$ remains an open problem. A promising approach would be to develop recursive constructions, such as those given in \cite{rogersaddition,w86} and the methods employed in \cite{rogers86} to establish the existence of $(m,3,c)$-PSDSs for arbitrary $c$.

Erd\H{o}s conjectured that, for every positive integer $\ell$, except for a finite number of them, there is a non-trivial perfect system of difference sets whose positive differences constitute exactly the set $\{1, 2, \dots, \ell\}$. This conjecture was proven by Laufer and Turgeon \cite{conjErdos} through explicit direct constructions, and later generalized by Rogers \cite{generalerdos}. In their work, the base blocks are predominantly of size $3$, with at most one block of size $4$ and two blocks of size $5$. An interesting question is whether an analogous result holds for perfect systems of difference sets composed entirely of base blocks whose size is strictly larger than 3.

Very recently, difference triangle sets were employed to construct staircase codes \cite{sksk}. Motivated by the relationship between difference triangle sets and perfect difference families shown in Section \ref{sec:DTS}, an intriguing question arises: for a $(12m+1,4,1)$-PDF, $\mathcal{A} = \{A_1, \ldots, A_m\}$, where each base block $A_j = \{0, a_{j1}, a_{j2}, a_{j3}\}$ for $j \in [m]$ satisfies $0 < a_{j1} < a_{j2} < a_{j3}$, can we construct such a PDF that achieves the minimum possible value for the sum $\sum_{j \in [m]} a_{j3}$? Specifically, the question is whether this minimum sum equals $5m^2 + m$ (see \cite[Proposition 6]{sksk}).

Finally, we remark that there is a similar concept to perfect systems of difference sets in the literature, named \textit{perfect systems of sets of iterated differences} \cite{hrr}. Earlier and related work is reported in \cite{Golay,Harborth,Leech}.


\appendix
\setcounter{equation}{0}
\renewcommand\theequation{A.\arabic{equation}}

\section{$(m,4,3)$-PSDSs for $5\leq m\leq 16$ in Lemma \ref{lem:small-PSDS}}\label{Appendix:small-PSDS}

\begin{longtable}{llllllll}
$m=5$: & $\{0, 3, 20, 28\}$,&$\{0, 4, 19, 31\}$,&$\{0, 5, 18, 29\}$,&$\{0, 6, 22, 32\}$,&$\{0, 7, 21, 30\}$. \\
$m=6$: & $\{0, 3, 28, 33\}$,&$\{0, 4, 24, 36\}$,&$\{0, 6, 22, 35\}$,&$\{0, 7, 21, 38\}$,&$\{0, 8, 23, 34\}$, \\
 & $\{0, 10, 19, 37\}$. \\
$m=7$: & $\{0, 3, 23, 37\}$,&$\{0, 4, 30, 40\}$,&$\{0, 5, 21, 43\}$,&$\{0, 6, 31, 39\}$,&$\{0, 7, 18, 42\}$, \\
 & $\{0, 9, 28, 41\}$, & $\{0, 15, 27, 44\}$.\\
$m=8$: & $\{0, 3, 26, 46\}$,&$\{0, 5, 30, 41\}$,&$\{0, 6, 28, 45\}$,&$\{0, 7, 38, 47\}$,&$\{0, 8, 32, 50\}$,\\
 & $\{0, 10, 29, 44\}$, & $\{0, 12, 16, 49\}$,&$\{0, 13, 27, 48\}$.	\\
$m=9$: & $\{0, 3, 37, 52\}$,&$\{0, 4, 42, 48\}$,&$\{0, 5, 18, 51\}$,&$\{0, 7, 27, 50\}$,&$\{0, 8, 30, 47\}$, \\
 & $\{0, 9, 40, 54\}$, & $\{0, 12, 28, 53\}$, &$\{0, 19, 29, 55\}$, &$\{0, 21, 32, 56\}$. \\
$m=10$: & $\{0, 3, 36, 57\}$,&$\{0, 4, 32, 45\}$,&$\{0, 5, 53, 60\}$,&$\{0, 6, 29, 49\}$,&$\{0, 8, 42, 52\}$,\\
 & $\{0, 9, 24, 59\}$, & $\{0, 11, 37, 62\}$, &$\{0, 12, 39, 58\}$,	&$\{0, 14, 31, 61\}$, &$\{0, 16, 38, 56\}$.\\
$m=11$: & $\{0, 3, 38, 59\}$, &$\{0, 4, 43, 68\}$, &$\{0, 5, 29, 62\}$, &$\{0, 6, 42, 53\}$, &$\{0, 8, 48, 66\}$, \\
 & $\{0, 9, 41, 63\}$, & $\{0, 10, 27, 55\}$, &$\{0, 12, 31, 61\}$, &$\{0, 13, 50, 65\}$, &$\{0, 14, 34, 60\}$, \\
 & $\{0, 16, 23, 67\}$. \\
$m=12$: & $\{0, 3, 48, 63\}$, &$\{0, 4, 37, 59\}$, &$\{0, 5, 23, 72\}$, &$\{0, 6, 35, 56\}$, &$\{0, 7, 53, 69\}$, \\
 & $\{0, 8, 39, 73\}$, & $\{0, 9, 51, 70\}$, & $\{0, 10, 36, 74\}$, &$\{0, 11, 43, 68\}$, &$\{0, 12, 52, 66\}$, \\
 & $\{0, 17, 30, 58\}$, &$\{0, 24, 44, 71\}$. \\
$m=13$: & $\{0, 3, 68, 72\}$, &$\{0, 6, 55, 79\}$, &$\{0, 7, 50, 63\}$, &$\{0, 8, 45, 67\}$, &$\{0, 10, 44, 76\}$, \\
 & $\{0, 11, 29, 62\}$, & $\{0, 12, 42, 70\}$, &$\{0, 14, 60, 75\}$, &$\{0, 16, 39, 80\}$, &$\{0, 17, 36, 71\}$, \\
 & $\{0, 20, 25, 77\}$, &$\{0, 21, 48, 74\}$, &	$\{0, 31, 40, 78\}$. \\
$m=14$: & $\{0, 3, 66, 83\}$, &$\{0, 4, 18, 60\}$, &$\{0, 5, 39, 84\}$, &$\{0, 6, 58, 74\}$, &$\{0, 7, 32, 82\}$, \\
 & $\{0, 8, 59, 69\}$, & $\{0, 9, 55, 81\}$,&$\{0, 11, 47, 78\}$,&$\{0, 12, 49, 77\}$,&$\{0, 15, 48, 86\}$,\\
 & $\{0, 19, 54, 76\}$,&$\{0, 20, 44, 73\}$,& $\{0, 21, 62, 85\}$,&$\{0, 27, 40, 70\}$. \\
$m=15$: & $\{0, 3, 31, 80\}$,&$\{0, 4, 44, 61\}$,&$\{0, 5, 69, 78\}$,&$\{0, 6, 58, 74\}$,&$\{0, 7, 36, 92\}$,\\
 & $\{0, 8, 70, 83\}$,&	$\{0, 10, 43, 91\}$,&$\{0, 11, 76, 90\}$,&$\{0, 12, 51, 72\}$,&$\{0, 15, 45, 86\}$, \\
 & $\{0, 18, 38, 84\}$,&$\{0, 22, 47, 89\}$,& $\{0, 23, 55, 82\}$,&$\{0, 24, 50, 87\}$,&$\{0, 34, 53, 88\}$.	\\
$m=16$: & $\{0, 3, 40, 72\}$,&$\{0, 4, 23, 91\}$,&$\{0, 5, 49, 65\}$,&$\{0, 6, 48, 79\}$,&$\{0, 7, 46, 96\}$,\\
 & $\{0, 8, 61, 82\}$,&	$\{0, 9, 84, 95\}$,&$\{0, 10, 62, 80\}$,&$\{0, 12, 57, 90\}$,&$\{0, 13, 56, 94\}$, \\
 & $\{0, 15, 51, 98\}$,&$\{0, 17, 71, 93\}$, & $\{0, 20, 34, 97\}$, &$\{0, 24, 59, 88\}$, &$\{0, 25, 66, 92\}$,\\
 & $\{0, 27, 55, 85\}$.	
\end{longtable}

\section{The remaining $18+x$ base blocks in Lemma \ref{lem:other-PSDS}}\label{Appendix:main}

{\setlength{\tabcolsep}{0.3pt}
\begin{longtable}{|lll|}
	\hline
	$x = 0$ & & \\
	\hline
	$\{0, 3t-3, 38t, 48t-1\}$
	&$\{0, 14t-1, 30t+1, 53t-2\}$
	&$\{0, 6t, 44t-2, 50t-1\}$
	\\
	$\{0, 11t, 34t-2, 45t-1\}$
	&$\{0, 3t-1, 32t-1, 50t-2\}$
	&$\{0, 12t+2, 20t+2, 54t+2\}$
	\\
	$\{0, 4t-1, 29t+1, 39t+1\}$
	&$\{0, 11t-1, 27t, 41t+1\}$
	&$\{0, 3t-2, 38t-1, 54t-1\}$
	\\
	$\{0, 12t, 40t+1, 54t+1\}$
	&$\{0, 8t-1, 28t-1, 40t\}$
	&$\{0, 3t, 48t+1, 53t\}$
	\\
	$\{0, 9t, 34t+1, 54t\}$
	&$\{0, 16t-1, 24t, 51t-1\}$
	&$\{0, 23t-1, 28t, 53t-1\}$
	\\
	$\{0, 5t, 24t-1, 44t\}$
	&$\{0, 4t, 19t, 51t\}$
	&$\{0, 7t, 25t, 48t\}$\\
	\hline
	
	$x = 1$ & & \\
	\hline
	$\{0, 3t-3, 38t, 44t+2\}$
	&$\{0, 15t+3, 38t+4, 54t+8\}$
	&$\{0, 11t+2, 34t+1, 45t+4\}$
	\\
	$\{0, 19t+2, 25t+5, 54t+7\}$
	&$\{0, 6t, 38t+1, 54t+4\}$
	&$\{0, 3t-2, 14t-1, 53t+3\}$
	\\
	$\{0, 3t-1, 23t+3, 48t+5\}$
	&$\{0, 18t+1, 27t+3, 50t+3\}$
	&$\{0, 4t+1, 32t+3, 51t+6\}$
	\\
	$\{0, 4t, 27t+2, 51t+4\}$
	&$\{0, 9t, 41t+4, 51t+5\}$
	&$\{0, 14t+2, 30t+4, 48t+6\}$
	\\
	$\{0, 8t, 38t+3, 53t+5\}$
	&$\{0, 12t+2, 20t+3, 54t+6\}$
	&$\{0, 4t-1, 29t+3, 44t+4\}$
	\\
	$\{0, 14t+1, 19t+1, 54t+5\}$
	&$\{0, 11t, 14t, 39t+3\}$
	&$\{0, 5t+1, 30t+2, 50t+4\}$
	\\
	$\{0, 6t+1, 44t+3, 53t+4\}$
	&
	&
	\\
	\hline
	
	$x = 2$ & & \\
	\hline
	$\{0, 15t+5, 29t+5, 53t+9\}$
	&$\{0, 16t+6, 35t+9, 54t+14\}$
	&$\{0, 3t-3, 23t+2, 38t+5\}$
	\\
	$\{0, 9t+3, 18t+3, 32t+4\}$
	&$\{0, 20t+6, 23t+4, 48t+12\}$
	&$\{0, 15t+4, 29t+6, 54t+13\}$
	\\
	$\{0, 7t+2, 32t+8, 51t+12\}$
	&$\{0, 11t+3, 34t+6, 53t+12\}$
	&$\{0, 16t+5, 27t+7, 54t+12\}$
	\\
	$\{0, 4t+1, 34t+7, 45t+11\}$
	&$\{0, 9t+1, 39t+8, 51t+11\}$
	&$\{0, 4t+2, 14t+4, 48t+9\}$
	\\
	$\{0, 18t+4, 25t+5, 50t+9\}$
	&$\{0, 5t+1, 45t+10, 53t+11\}$
	&$\{0, 3t-1, 35t+6, 51t+10\}$
	\\
	$\{0, 3t+1, 27t+6, 50t+11\}$
	&$\{0, 6t, 38t+6, 47t+8\}$
	&$\{0, 3t, 47t+9, 53t+10\}$
	\\
	$\{0, 4t, 39t+7, 45t+9\}$
	&$\{0, 10t+3, 40t+8, 54t+11\}$
	&
	\\
	\hline
	
	$x = 3$ & & \\
	\hline
	$\{0, 11t+5, 34t+8, 45t+14\}$
	&$\{0, 6t+5, 29t+9, 44t+13\}$
	&$\{0, 16t+8, 19t+5, 54t+18\}$
	\\
	$\{0, 20t+8, 23t+6, 48t+17\}$
	&$\{0, 6t+4, 12t+6, 54t+20\}$
	&$\{0, 6t+1, 38t+9, 53t+15\}$
	\\
	$\{0, 3t-1, 47t+13, 50t+14\}$
	&$\{0, 3t, 28t+10, 51t+15\}$
	&$\{0, 10t+2, 14t+3, 44t+12\}$
	\\
	$\{0, 14t+5, 38t+11, 53t+16\}$
	&$\{0, 6t+3, 38t+12, 54t+17\}$
	&$\{0, 19t+6, 27t+9, 51t+16\}$
	\\
	$\{0, 9t+2, 39t+12, 50t+15\}$
	&$\{0, 12t+5, 39t+13, 53t+17\}$
	&$\{0, 4t+3, 29t+10, 54t+19\}$
	\\
	$\{0, 4t+2, 24t+8, 44t+15\}$
	&$\{0, 5t+2, 23t+7, 39t+14\}$
	&$\{0, 4t, 34t+11, 45t+15\}$
	\\
	$\{0, 10t+3, 35t+11, 51t+17\}$
	&$\{0, 9t+3, 19t+7, 51t+18\}$
	&$\{0, 8t+2, 35t+12, 53t+18\}$
	\\
	\hline
	
	$x = 4$ & & \\
	\hline
	$\{0, 4t, 15t+7, 38t+12\}$
	&$\{0, 3t-3, 14t+3, 38t+13\}$
	&$\{0, 15t+8, 38t+14, 54t+24\}$
	\\
	$\{0, 3t-1, 44t+16, 48t+19\}$
	&$\{0, 19t+7, 23t+8, 53t+20\}$
	&$\{0, 12t+8, 35t+17, 54t+26\}$
	\\
	$\{0, 3t-2, 32t+11, 51t+21\}$
	&$\{0, 6t+4, 29t+11, 54t+25\}$
	&$\{0, 3t, 47t+18, 53t+21\}$
	\\
	$\{0, 7t+3, 27t+13, 54t+22\}$
	&$\{0, 6t+2, 44t+17, 53t+22\}$
	&$\{0, 5t+2, 32t+13, 50t+20\}$
	\\
	$\{0, 10t+5, 35t+14, 51t+23\}$
	&$\{0, 8t+3, 16t+8, 48t+20\}$
	&$\{0, 4t+2, 38t+17, 45t+21\}$
	\\
	$\{0, 8t+4, 23t+10, 48t+22\}$
	&$\{0, 16t+7, 19t+8, 51t+22\}$
	&$\{0, 11t+4, 38t+16, 50t+22\}$
	\\
	$\{0, 10t+4, 28t+12, 42t+19\}$
	&$\{0, 9t+4, 39t+17, 53t+23\}$
	&$\{0, 24t+9, 29t+12, 54t+23\}$
	\\
	$\{0, 11t+5, 25t+10, 45t+19\}$
	&
	&
	\\
	\hline
	
	$x = 5$ & & \\
	\hline			
	$\{0, 3t-3, 14t+4, 44t+21\}$
	&$\{0, 15t+8, 38t+16, 53t+26\}$
	&$\{0, 12t+9, 35t+21, 54t+32\}$
	\\
	$\{0, 6t+5, 29t+14, 45t+25\}$
	&$\{0, 3t-2, 23t+11, 35t+17\}$
	&$\{0, 6t+3, 38t+17, 54t+29\}$
	\\
	$\{0, 15t+9, 20t+12, 47t+24\}$
	&$\{0, 3t-1, 47t+23, 51t+26\}$
	&$\{0, 9t+4, 34t+16, 50t+25\}$
	\\
	$\{0, 4t+4, 27t+14, 54t+30\}$
	&$\{0, 3t+2, 35t+19, 54t+31\}$
	&$\{0, 11t+5, 18t+9, 53t+29\}$
	\\
	$\{0, 5t+4, 16t+10, 45t+26\}$
	&$\{0, 3t, 28t+15, 51t+28\}$
	&$\{0, 4t+1, 38t+19, 45t+24\}$
	\\
	$\{0, 8t+4, 20t+11, 47t+26\}$
	&$\{0, 8t+5, 35t+18, 53t+28\}$
	&$\{0, 4t+2, 19t+9, 51t+27\}$
	\\
	$\{0, 3t+1, 32t+16, 41t+22\}$
	&$\{0, 6t+4, 25t+14, 50t+27\}$
	&$\{0, 9t+5, 39t+21, 53t+27\}$
	\\
	$\{0, 14t+7, 24t+13, 54t+28\}$
	&$\{0, 10t+5, 34t+17, 48t+25\}$
	&
	\\
	\hline
	
	$x = 6$ & & \\
	\hline
	$\{0, 10t+8, 25t+20, 48t+30\}$
	&$\{0, 16t+14, 19t+11, 54t+35\}$
	&$\{0, 3t-1, 38t+20, 44t+26\}$
	\\
	$\{0, 19t+13, 30t+21, 53t+32\}$
	&$\{0, 3t-2, 14t+7, 38t+23\}$
	&$\{0, 9t+6, 23t+12, 50t+32\}$
	\\
	$\{0, 15t+10, 24t+15, 47t+28\}$
	&$\{0, 8t+6, 20t+15, 47t+30\}$
	&$\{0, 15t+11, 38t+25, 54t+38\}$
	\\
	$\{0, 4t+1, 34t+20, 54t+34\}$
	&$\{0, 3t, 47t+29, 51t+32\}$
	&$\{0, 7t+5, 32t+22, 48t+34\}$
	\\
	$\{0, 3t+2, 45t+31, 51t+35\}$
	&$\{0, 5t+3, 32t+20, 50t+31\}$
	&$\{0, 14t+8, 34t+21, 53t+33\}$
	\\
	$\{0, 19t+14, 27t+19, 51t+33\}$
	&$\{0, 6t+3, 44t+27, 53t+34\}$
	&$\{0, 14t+11, 25t+18, 54t+37\}$
	\\
	$\{0, 4t+2, 18t+12, 45t+30\}$
	&$\{0, 3t+1, 10t+7, 35t+22\}$
	&$\{0, 12t+8, 40t+27, 54t+36\}$
	\\
	$\{0, 5t+4, 30t+20, 53t+35\}$
	&$\{0, 5t+2, 34t+22, 44t+28\}$
	&$\{0, 6t+5, 35t+23, 51t+34\}$
	\\
	\hline
	
	$x = 7$ & & \\
	\hline
	$\{0, 15t+13, 38t+25, 53t+39\}$
	&$\{0, 3t-3, 14t+8, 23t+14\}$
	&$\{0, 3t-1, 38t+24, 54t+40\}$
	\\
	$\{0, 16t+15, 25t+23, 48t+36\}$
	&$\{0, 3t-2, 38t+27, 44t+31\}$
	&$\{0, 3t, 44t+30, 50t+36\}$
	\\
	$\{0, 6t+5, 35t+28, 54t+44\}$
	&$\{0, 20t+15, 24t+16, 47t+33\}$
	&$\{0, 3t+1, 19t+15, 53t+38\}$
	\\
	$\{0, 7t+7, 39t+30, 54t+41\}$
	&$\{0, 4t+2, 23t+15, 51t+37\}$
	&$\{0, 19t+14, 24t+17, 51t+40\}$
	\\
	$\{0, 10t+7, 24t+18, 51t+38\}$
	&$\{0, 4t+3, 34t+25, 48t+35\}$
	&$\{0, 12t+11, 30t+24, 44t+33\}$
	\\
	$\{0, 5t+4, 39t+28, 50t+38\}$
	&$\{0, 8t+7, 28t+23, 53t+40\}$
	&$\{0, 10t+9, 25t+21, 48t+37\}$
	\\
	$\{0, 9t+7, 27t+21, 54t+43\}$
	&$\{0, 14t+12, 25t+20, 54t+42\}$
	&$\{0, 8t+6, 42t+32, 53t+41\}$
	\\
	$\{0, 7t+6, 32t+25, 48t+38\}$
	&$\{0, 3t+2, 38t+29, 42t+33\}$
	&$\{0, 5t+5, 30t+23, 40t+31\}$
	\\
	$\{0, 7t+5, 39t+29, 51t+39\}$
	&
	&
	\\
	\hline
\end{longtable}}

\section{$(v,4,2)$-PDFs for small values of $v$ in Lemma \ref{thm:(v,4,2)PDF}} \label{Appendix:small-PDF-2}

\begin{longtable}{llllllll}
$v=19$: & $\{0,1,4,9\}$,& $\{0,2,6,9\}$,& $\{0,2,7,8\}$. \\
$v=25$: & $\{0,1,4,10\}$,& $\{0,2,7,12\}$,& $\{0,2,8,11\}$,& $\{0,4,11,12\}$. \\
$v=31$: &$\{0,1,4,11\}$,& $\{0,2,8,14\}$,& $\{0,2,11,15\}$,& $\{0,3,10,15\}$,& $\{0,5,13,14\}$. \\
$v=37$: & $\{0,3,10,18\}$,& $\{0,3,13,17\}$,& $\{0,4,15,16\}$,& $\{0,2,7,16\}$,& $\{0,5,6,18\}$,\\
& $\{0,6,8,17\}$. \\
$v=43$: & $\{0,4,10,20\}$,& $\{0,5,12,21\}$,& $\{0,6,14,19\}$,& $\{0,2,11,13\}$,& $\{0,3,17,18\}$,\\
& $\{0,3,20,21\}$,& $\{0,4,12,19\}$. \\
$v=55$: & $\{0,4,22,26\}$,& $\{0,5,17,26\}$,& $\{0,6,14,25\}$,& $\{0,6,16,27\}$,& $\{0,9,23,24\}$,\\
&$\{0,2,5,18\}$,& $\{0,2,10,25\}$,& $\{0,3,20,27\}$,& $\{0,7,19,20\}$.
\end{longtable}

\section{The remaining $12+x$ base blocks in Lemma \ref{thm:(v,4,2)PDF}}\label{Appendix:big-PDF-2}

\begin{longtable}{|lll|}
		\hline
		$x = -1$ & & \\
		\hline
		$\{0, 18t-5, 3t-4, 11t-3\} $
		& $\{0, 18t-4, 4t-1, 4t\} $
		& $\{0, 18t-3, 8t-2, 17t-4\} $ \\
		$\{0, 9t, 17t-3, 12t-1\} $
		& $\{0, 3t, 15t, 10t+1\} $
		& $\{0, 4t-2, 15t-2, 14t-2\} $ \\
		$\{0, 14t, 15t-1, t+1\} $
		& $\{0, 8t, 11t-2, 14t-1\} $
		& $\{0, 17t-1, 11t-1, 6t-2\} $ \\
		$\{0, 9t-1, 17t-2, 12t-2\} $
		& $\{0, 4t+1, 7t-1, 14t-1\} $
		& \\
		\hline
		$x = 1$ & & \\
		\hline
		$\{0, 18t+2, 4t+4, 7t\} $
		& $\{0, 18t-1, 17t, 6t-1\} $
		& $\{0, 2, 15t+2, 14t+1\} $\\
		$\{0, 18t, 8t-1, 15t+1\} $
		& $\{0, 9t, 18t+1, 3t-2\} $
		& $\{0, 12t+1, 7t+3, t+2\} $\\
		$\{0, 17t+1, 3t+1, 10t\} $
		& $\{0, 4t+2, 12t+3, 15t+2\} $
		& $\{0, 18t+3, 13t+2, 5t-1\} $\\
		$\{0, 9t+2, 17t+4, 12t+2\} $
		& $\{0, 1, 4t, 17t+3\} $
		& $\{0, 18t+3, 8t, 3t\} $\\
		$\{0, 4t+1, 15t+4, 14t+3\} $
		& &\\
		\hline
		$x = 3$ & & \\
		\hline
		$\{0, 2, 13t+8, 4t+6\} $
		& $\{0, t+3, 18t+7, 15t+5\} $
		& $\{0, 1, 18t+5, 10t+4\} $\\
		$\{0, t, 18t+6, 5t-1\} $
		& $\{0, 6t+1, 7t+2, 14t+3\} $
		& $\{0, 17t+5, 12t+4, 3t-1\} $\\
		$\{0, t+2, 15t+6, 10t+6\} $
		& $\{0, 18t+9, 3t, 14t+7\} $
		& $\{0, 18t+8, 13t+5, 3t\} $\\
		$\{0, 1, 17t+9, 8t+5\} $
		& $\{0, 11t+5, 4t, 15t+6\} $
		& $\{0, 11t+5, t+1, 18t+8\} $\\
		$\{0, 6t+3, 12t+5, 18t+9\} $
		& $\{0, t+2, 15t+7, 10t+5\} $
		& $\{0, 11t+4, 8t+3, 15t+7\} $ \\\hline
	\end{longtable}

\section{$(v,4,3)$-PDFs for small values of $v$ in Lemma \ref{thm:(v,4,3)PDF}}
\label{Appendix:small-PDF-3}

\begin{longtable}{llllllll}
$v=17$: & $\{0,1,4,8\}$,& $\{0,2,5,8\}$,& $\{0,2,6,7\}$,& $\{0,2,7,8\}$. \\
$v=21$: & $\{0,1,4,9\}$,& $\{0,2,6,10\}$,& $\{0,2,8,9\}$,& $\{0,3,5,10\}$,& $\{0,3,9,10\}$. \\
$v=25$: & $\{0,1,3,10\}$,& $\{0,2,6,11\}$,& $\{0,2,8,12\}$,& $\{0,3,8,12\}$,& $\{0,3,10,11\}$, \\
& $\{0,5,11,12\}$. \\
$v=29$: & $\{0,3,7,14\}$,& $\{0,3,8,14\}$,& $\{0,4,9,14\}$,& $\{0,1,12,13\}$,& $\{0,2,8,10\}$,
\\&$\{0,3,12,13\}$,& $\{0,4,6,13\}$. \\
$v=33$: & $\{0,3,8,16\}$,& $\{0,4,9,16\}$,& $\{0,4,10,16\}$,& $\{0,1,2,15\}$,& $\{0,2,11,13\}$, \\
&$\{0,3,10,14\}$,& $\{0,3,10,15\}$,& $\{0,6,14,15\}$. \\
$v=37$: & $\{0,4,10,18\}$,& $\{0,4,11,18\}$,& $\{0,5,11,18\}$,& $\{0,1,2,17\}$, & $\{0,2,12,15\}$, \\
&$\{0,2,12,17\}$, & $\{0,3,9,14\}$,& $\{0,3,12,16\}$,& $\{0,8,16,17\}$. \\
\end{longtable}

\section{The remaining $12+x$ base blocks in Lemma \ref{thm:(v,4,3)PDF}}\label{Appendix:big-PDF-3}

\begin{longtable}{|lll|}\hline
		$x=-2$ & &\\
		\hline
		$\{0, 2, 6t+1, 10t-2\} $
		& $\{0, 4t-4, 12t-5, 7t-3\} $
		& $\{0, 4t-2, 2t-3, 10t-5\} $\\
		$\{0, 4t, 12t-4, 6t\} $
		& $\{0, 1, 7t-1, 10t-1\} $
		& $\{0, 8t, 11t-3, t\} $\\
		$\{0, 4t-1, 12t-4, 10t-3\} $
		& $\{0, 8t-2, 10t-3, 5t\} $
		& $\{0, 2, 11t-2, 6t-1\} $\\
		$\{0, 1, 9t-2, 3t\} $ && \\
		\hline
		$x=-1$ & &\\
		\hline
		$\{0, 12t-2, 2t-1, 11t-2\} $
		& $\{0, 9t-2, 3t-1, 12t-2\} $
		& $\{0, 8t-1, 2t-2, 6t-3\} $\\
		$\{0, 2, 8t-1, 10t\} $
		& $\{0, 8t-2, 11t-1, 5t+1\} $
		& $\{0, 9t, 3t-2, 10t-1\} $\\
		$\{0, 9t, 4t-2, 6t-2\} $
		& $\{0, 4t, 11t, 10t-1\} $
		& $\{0, 1, 8t+1, 5t\} $\\
		$\{0, 4t+1, 2t+1, 10t+1\} $
		& $\{0, 1, 6t+1, 10t+1\} $ &\\
		\hline
		$x=1$ & &\\
		\hline
		$\{0, 2, 3t+1, 6t-1\} $
		& $\{0, 8t+4, 3t, 10t+2\} $
		& $\{0, 8t+3, 11t, t-1\} $\\
		$\{0, 4t+2, 12t+2, 6t+4\} $
		& $\{0, 1, 12t+1, 6t-1\} $
		& $\{0, 8t+3, 3t, 10t+3\} $\\
		$\{0, 4t+1, 12t, 10t+1\} $
		& $\{0, 9t, 12t+2, t+1\} $
		& $\{0, 4t+1, 12t+2, 11t+2\} $\\
		$\{0, 1, 6t+2, 10t+2\} $
		& $\{0, 4t, 12t+1, 10t\} $
		& $\{0, 9t+1, 10t+3, 5t+2\} $\\
		$\{0, 4t-1, 12t+1, 10t\} $ &&\\
		\hline
		$x=2$ & &\\
		\hline
		$\{0, 9t+3, 11t+5, 6t+6\} $
		& $\{0, 12t+1, 6t-2, 6t\} $
		& $\{0, 8t+4, 11t+4, t-1\} $\\
		$\{0, 4t-1, 12t+4, 7t+1\} $
		& $\{0, 1, 3t-1, 10t+3\} $
		& $\{0, 8t+3, 3t-1, 11t+3\} $\\
		$\{0, 9t+2, 12t+3, 6t-1\} $
		& $\{0, 12t+3, 2t-1, 6t-1\} $
		& $\{0, 4t+1, 12t+2, 10t+4\} $\\
		$\{0, 12t+4, 2t, 10t+1\} $
		& $\{0, 4t+2, 12t+4, 10t+3\} $
		& $\{0, 1, 10t+3, 5t+1\} $\\
		$\{0, 12t+2, 2t, 6t\} $
		& $\{0, 8t+3, 7t+2, t\} $ &\\
		\hline
	\end{longtable}

\section{The remaining $12+x$ base blocks in Lemma \ref{thm:(v,4,6)PDF}}\label{Appendix:big-PDF-6}

\begin{longtable}{|lll|}
		\hline
		$x=-1$ & &\\
		\hline
		$\{0, 2, 6t-1, t+1\} $
		& $\{0, 1, 4t-2, 6t-1\} $
		& $\{0, 1, 4t, 6t-1\} $\\
		$\{0, 4t-1, 3t-3, 6t-1\} $
		& $\{0, 1, 4t-1, 4t+1\} $
		& $\{0, 4t, 2t-1, 5t\} $\\
		$\{0, 3t-1, 6t-1, 5t-1\} $
		& $\{0, 3t-2, 3t+1, 5t\} $
		& $\{0, 1, 3t+2, 5t\} $\\
		$\{0, 2, 3t, 5t+1\} $
		& $\{0, 1, 3t, 5t\} $ &\\
		\hline
		$x=1$ & &\\
		\hline
		$\{0, 3t-3, 3t+1, 5t-2\} $
		& $\{0, 3, 4t, 6t\} $
		& $\{0, 2, 6t, 5t+1\} $\\
		$\{0, 3, 3t+1, 5t+2\} $
		& $\{0, 4t-2, 6t, 5t\} $
		& $\{0, 4t+1, 2t-1, 6t+1\} $\\
		$\{0, 2, 3t+2, 6t+1\} $
		& $\{0, 2, 6t+1, 5t+1\} $
		& $\{0, 4t-1, 2t+1, 6t+1\} $\\
		$\{0, 1, 6t+1, 5t\} $
		& $\{0, 1, 3t+2, 5t+1\} $
		& $\{0, 1, 4t, 6t+1\} $\\
		$\{0, 1, 3t+1, 5t+1\} $ & &\\
		\hline
	\end{longtable}


\begin{thebibliography}{99}
	
\bibitem{ab2004}
R.J.R.~Abel and M.~Buratti, Some progress on $(v,4,1)$ difference families and optical orthogonal codes, J. Comb. Theory Ser. A, 106 (2004), 59--75.

\bibitem{ab07}
R.J.R.~Abel and M.~Buratti, Difference families, in: C.J.~Colbourn, J.H.~Dinitz (Eds.), Handbook of Combinatorial Designs (2nd Edition), CRC Press, Boca Raton, FL, (2007), 392--410.

\bibitem{Abrham84}
J.~Abrham, Perfect systems of difference sets -- a survey, Ars Comb., 17A, (1984), 5--36.

\bibitem{ad11}
R.C.~Alperin and V.~Drobot, Golomb rulers, Math. Mag., 84 (2011), 48--55.

\bibitem{b78}
J.-C.~Bermond, Graceful graphs, radio antennae and French windmills, Proceedings One-Day Combinatorics Conference, Research Notes in Mathematics, vol. 34, Pitman, 1979, Open university,
Milton Keynes, 1978, United Kingdom, 18--37.

\bibitem{bbg}
J.-C.~Bermond, A.E.~Brouwer, and A.~Germa, Syst\`{e}mes de triplets et diff\'{e}rences	associ\'{e}es, Prob\`{e}mes Combinatoires et Th\'{e}orie des Graphes, Colloque International CNRS 260, 1978, Orsay, Juillet 1976, France, 35--38.

\bibitem{bkt}
J.-C.~Bermond, A.~Kotzig, and J.~Turgeon, On a combinatorial problem of antennas in radioastronomy, in: Proc. 18th Hungarian Combinatorial Colloquium, North Holland, (1976), 135--149.
	
\bibitem{bjh}
T.~Beth, D.~Jungnickel, and H.~Lenz, Design Theory, Cambridge University Press, 1999.

\bibitem{bbr}
F.~Biraud, E.J.~Blum, and J.C.~Ribes, On optimum synthetic linear arrays with application to radioastronomy, IEEE Trans. Antennas Propag., 22 (1974), 108--109.

\bibitem{bw1987}
E.F.~Brickell and V.~Wei, Optical orthogonal codes and cyclic block designs, Congr. Numer., 58 (1987),
175--182.

\bibitem{cwf}
Z.~Chen, D.~Wu, and P.~Fan, Applications of additive sequence of permutations, Discrete Math., 309 (2009), 6459--6463.

\bibitem{cc03}
W.~Chu and C.J.~Colbourn, A note on the equivalence between strict optical orthogonal codes and difference triangle sets, IEEE Trans. Inf. Theory, 49 (2003), 759--761.

\bibitem{csw1989}
F.R.K.~Chung, J.A.~Salehi, and V.K.~Wei, Optical orthogonal codes: design, analysis and applications, IEEE Trans. Inf. Theory, 35 (1989), 595--604.

\bibitem{c07}
C.J.~Colbourn, Difference matrices, in: CRC Handbook of Combinatorial Designs (C.J.~Colbourn and J.H.~Dinitz, eds.), CRC Press, Boca Raton, 2007, 411--419.

\bibitem{cc}
M.J.~Colbourn and C.J.~Colbourn, Cyclic block designs with block size 3, Eur. J. Comb., 2 (1981), 21--26.

\bibitem{dw}
D.~Doty and A.~Winslow, Design of geometric molecular bonds, IEEE Trans. Mol. Biol. Multi-Scale Commun., 3 (2017), 13--23.

\bibitem{erdos}
P.~Erd\H{o}s, Some problems on additive number theory, Ann. Discrete Math, 12 (1982), 113--116.

\bibitem{Gallian}
J.~Gallian, A dynamic survey of graph labeling, Electron. J. Comb., 2023, \#DS6.

\bibitem{glm}
G.~Ge, A.C.H.~Ling, and Y.~Miao, A systematic construction for radar arrays, IEEE Trans. Inf. Theory, 54 (2008), 410--414.

\bibitem{gms}
G.~Ge, Y.~Miao, and X.~Sun, Perfect difference families, perfect difference matrices, and related combinatorial structures, J. Comb. Designs, 18 (2010), 415--449.

\bibitem{gwnd}
T.~Gerling, K.F.~Wagenbauer, A.M.~Neuner, and H.~Dietz, Dynamic DNA devices and assemblies formed by shape--complementary, non--base pairing 3D components, Science, 347 (2015), 1446--1452.

\bibitem{Golay}
M.J.E.~Golay, Note on the representation of $1,2,\ldots,n$ by differences, J. Lond. Math. Soc., 4 (1972),
729--734.

\bibitem{hrr}
G.M.~Hamilton, I.T.~Roberts, and D.G. Rogers, Regular perfect systems of sets of iterated differences, Eur. J. Comb., 19 (1998), 29--44.
	
\bibitem{Harborth}
H.~Harborth, Solution of Steinhaus's problem with plus and minus signs, J. Comb. Theory Ser. A, 12 (1972), 253--259.
	
\bibitem{hl}
F.K.~Hwang and S.~Lin, A direct method to construct triple systems, J. Comb. Theory Ser. A, 17 (1974), 84--94.

\bibitem{John62}
S.M.~Johnson, A new upper bound for error-correcting codes, IRE Trans. Inform. Theory, 8 (1962), 203--207.

\bibitem{Jimbo}
M.~Jimbo, Cyclic neofields and cyclic Steiner $2$-designs with block size $4$, Eur. J. Comb., 4 (1983), 247--254.

\bibitem{ko}
P.~Kaski and P.R.J.~\"{O}sterg{\aa}rd, Classification Algorithms for Codes and Designs, Springer, 2006.
	
\bibitem{kl78}
A.~Kotzig and P.J.~Laufer, When are permutations additive?, Amer. Math. Monthly, 85 (1978), 364--365.

\bibitem{kl}
A.~Kotzig and P.J.~Laufer, The minimum number of components in 4-regular perfect systems of difference sets, J. Comb. Theory Ser. A, 37 (1984), 370--373.

\bibitem{kt}
A.~Kotzig and J.~Turgeon, Regular perfect systems of differences sets, Discrete Math., 20 (1977), 249--254.

\bibitem{kt1984}
A.~Kotzig and J.~Turgeon, Additive sequences of permutations, Ann. Discrete Math., 20 (1984), 197--201.
	
\bibitem{Laufer}
P.J.~Laufer, Regular perfect systems of differences sets of size 4 and extremal systems of size 3, Ann. Discrete Math., 12 (1982), 193--201.

\bibitem{conjErdos}
P.J.~Laufer and J.~Turgeon, On a conjecture of Paul Erd\H{o}s for perfect systems of difference sets,	Discrete Math., 47 (1983), 255--266.

\bibitem{Leech}
J.~Leech, On the representation of $1,2,\ldots,n$ by differences, J. Lond. Math. Soc., 31 (1956),
160--169.

\bibitem{Checking}
H.~Liu, T.~Feng, X.~Wang, and M.~Zhang, Perfect difference families, perfect systems of difference sets and their applications, Zenodo, {\tt https://doi.org/10.5281/ zenodo.17317951} (October 11, 2025).

\bibitem{mathon}
R.~Mathon, Constructions for cyclic Steiner $2$-designs, Ann. Discrete Math., 34 (1987), 353--362.

\bibitem{mr}
M.~Meszka and A.~Rosa, Block size five: quo vadis?, in Stinson 66 - New Advances in Designs, Codes and Cryptography, C.J.~Colbourn and J.H.~Dinitz (Eds.), Fields Institute Communications, 86 (2024), 345-352.

\bibitem{Peltesohn}
R.~Peltesohn, Eine L\"{o}sung der beiden Heffterschen Differenzenprobleme, Compositio Math., 6 (1939), 251--257.
	
\bibitem{rogersaddition}
D.G.~Rogers, Addition theorems for perfect systems of difference sets, J. Lond. Math. Soc., 23 (1981), 385--395.

\bibitem{rogers86}
D.G.~Rogers, Irregular, extremal perfect systems of difference sets, J. Lond. Math. Soc., 34 (1986),
193--211.
	
\bibitem{rogersmultiply}
D.G.~Rogers, Multiply perfect systems of difference sets, Ann. Discrete Math., 34 (1987), 393--408.
	
\bibitem{generalerdos}
D.G.~Rogers, On the general Erd\H{o}s conjecture for perfect systems of difference sets and embedding partial complete permutations, Eur. J. Comb., 12 (1991), 549--560.

\bibitem{s07}
J.B.~Shearer, Difference triangle sets, in: C.J.~Colbourn, J.H.~Dinitz (Eds.), Handbook of Combinatorial Designs (2nd Edition), CRC Press, Boca Raton, FL, (2007), 436--440.

\bibitem{sksk}
M.~Shehadeh, F.R.~Kschischang, A.Y.~Sukmadji, and W.~Kingsford, Higher-order staircase codes, IEEE Trans. Inf. Theory, 71 (2025), 2517--2538.

\bibitem{Simpson}
J.E.~Simpson, Langford sequences: perfect and hooked, Discrete Math., 44 (1983), 97--104.

\bibitem{rb67}
J.P.~Robinson and A.J.~Bernstein, A class of binary recurrent codes with limited error propagation, IEEE Trans. Inf. Theory, 13 (1967), 106--113.

\bibitem{roth}
P.W.K.~Rothemund, Folding DNA to create nanoscale shapes and patterns, Nature, 440 (2006), 297--302.

\bibitem{wcftw}
L.~Wang, L.~Cai, T.~Feng, Z.~Tian, and X.~Wang, Geometric orthogonal codes and geometrical difference packings, Des. Codes Cryptogr., 90 (2022), 1857--1879.

\bibitem{wc2010}
X.~Wang and Y.~Chang, Further results on $(v,4,1)$-perfect difference families, Discrete Math., 310 (2010), 1995--2006.

\bibitem{w86}
P.~Wild, Combining perfect systems of difference sets, Bull. London Math. Soc., 18 (1986), 127--131.

\bibitem{Yin98}
J.~Yin, Some combinatorial constructions for optical orthogonal codes, Discrete Math., 185 (1998), 201--219.

\bibitem{zfw2022}
M.~Zhang, T.~Feng, and X.~Wang, The existence of cyclic $(v,4,1)$-designs, Des. Codes, Cryptogr., 90 (2022), 1611--1628.

\bibitem{zcf2025}
Z.~Zhang, Y.~Chang, and T.~Feng, Cyclic balanced sampling plans avoiding adjacent units with block size four, Discrete Math., 348 (2025), 114490.

\bibitem{zcf}
C.~Zhao, Y.~Chang, and T.~Feng, The existence of optimal $(v,4,1)$ optical orthogonal codes achieving the Johnson bound, IEEE Trans. Inf. Theory, 70 (2024), 8746--8757.

\bibitem{zzcfwz2024}
C.~Zhao, B.~Zhao, Y.~Chang, T.~Feng, X.~Wang, and M.~Zhang, Cyclic relative difference families with block size four and their applications, J. Comb. Theory Ser. A, 206 (2024), 105890.
	
\end{thebibliography}
\end{document}